\newcommand{\NN}{{\mathbb N}} 
\newcommand{\QQ}{{\mathbb Q}}  
\newcommand{\sym}{\mathcal{S}}
\newcommand{\Invs}[1]{\mathcal{I}_{#1}}
\newcommand{\pattern}{
  \begin{minipage}[c]{1.45em}\scalebox{0.5}{\includegraphics{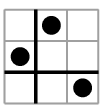}}
  \end{minipage}
}
\newcommand{\lara}{\ensuremath{3{\bar 1}52\bar{4}}}
\newcommand{\si}{\sigma}
\def\emm#1,{{\em #1}}
\DeclareMathOperator{\Asc}{\mathfrak{asc}}
\DeclareMathOperator{\st}{std}
\DeclareMathOperator{\comp}{comp}
\DeclareMathOperator{\last}{last}
\DeclareMathOperator{\asc}{asc}
\DeclareMathOperator{\lmax}{lmax}
\DeclareMathOperator{\lmin}{lmin}
\DeclareMathOperator{\rmax}{rmax}
\DeclareMathOperator{\rmin}{rmin}
\DeclareMathOperator{\rlmax}{rl-max}
\DeclareMathOperator{\zeros}{zeros}
\newcommand{\addi}{\textsf{Add1}}
\newcommand{\addii}{\textsf{Add2}}
\newcommand{\addiii}{\textsf{Add3}}
\newcommand{\subi}{\textsf{Rem1}}
\newcommand{\subii}{\textsf{Rem2}}
\newcommand{\subiii}{\textsf{Rem3}}
\newcommand{\rank}{\ell} 
\newcommand{\srank}{\ell^{\star}} 
\newcommand{\p}{permutation}
\newcommand{\add}{\varphi}
\newcommand{\remove}{\psi}
\newcommand{\M}{\mathcal{M}}
\newcommand{\N}{\mathcal{N}}
\newcommand{\A}{\mathcal{A}}      
\renewcommand{\P}{\mathcal{P}}    
\newcommand{\R}{\mathcal{R}}      
\newcommand{\nyip}{\hspace*{0.425em}}
\newtheorem{theorem}{Theorem}
\newtheorem{proposition}[theorem]{Proposition}
\newtheorem{corollary}[theorem]{Corollary}
\newtheorem{lemma}[theorem]{Lemma}
\theoremstyle{definition}
\newtheorem{example}{Example}
\newtheorem{question}{Question}
\newcommand{\beq}{\begin{equation}}
\newcommand{\eeq}{\end{equation}}
\newcommand{\gf}{generating function}
\newcommand{\fps}{formal power series}
\newcommand{\tpt}{$(\mathbf{2+2})$}
\newcommand{\tptp}{$\mathbf{2+2}$}
\begin{document}

\title[Posets, sequences and permutations]
{{\tpt}-free posets, ascent sequences \\ 
and pattern avoiding permutations}

\author[M. Bousquet-M\'elou]{Mireille Bousquet-M\'elou}
\address{M. Bousquet-M\'elou: CNRS, LaBRI, Universit\'e Bordeaux 1, 
351 cours de la Lib\'eration, 33405 Talence, France}  

\thanks{MBM was supported by the French ``Agence Nationale
de la Recherche'', project SADA ANR-05-BLAN-0372.}

\author[A. Claesson]{Anders Claesson}
\address{A. Claesson and S. Kitaev: The Mathematics Institute,
Reykjavik University, 103 Reykjavik, Iceland} 
 \thanks{AC and SK were supported by grant no. 060005012 from
  the Icelandic Research Fund.}

\author[M. Dukes]{\\ Mark Dukes}
\address{M. Dukes: Science Institute, University of Iceland, 107
Reykjavik, Iceland}

\author[S. Kitaev]{Sergey Kitaev}

\date{November 25th, 2009}

\begin{abstract}
We present bijections between four classes of combinatorial
objects. Two of them, the class of unlabeled {\tpt}-free posets and a
certain class of involutions (or chord diagrams), already appeared in
the literature, but were apparently not known to be equinumerous.  We
present a direct bijection between them.  The third class is a family
of permutations defined in terms of a new type of pattern. An
attractive property of these patterns is that, like classical
patterns, they are closed under the action of the symmetry group of
the square. The fourth class is formed by certain integer sequences,
called ascent sequences, which have a simple recursive structure and
are shown to encode \tpt-free posets and permutations.  Our bijections
preserve numerous statistics.

We  determine the \gf\ of these classes of objects, thus
recovering a non-D-finite series obtained by Zagier for the class of
chord diagrams.  
Finally,
we characterize the ascent sequences that correspond to permutations
avoiding the barred pattern $3{\bar 1}52{\bar 4}$ and use this to
enumerate those permutations, thereby settling a conjecture of 
Pudwell.

\end{abstract}


\maketitle
\thispagestyle{empty}

\section{Introduction}
This paper presents correspondences between three main
structures, seemingly unrelated:
unlabeled {\tpt}-free posets on $n$ elements, 
certain fixed point free involutions (or chord diagrams) on $2n$
elements introduced by Stoimenow in connection with Vassiliev
invariants of knots~\cite{stoim},
and a new class of permutations on $n$ letters.
An auxiliary class of objects, consisting of certain sequences of
nonnegative integers that we call \emph{ascent sequences}, plays a
central role in some of these
correspondences. Indeed, we show that both our permutations and
{\tpt}-free posets  can be encoded as  ascent sequences.

A poset is said to be \emph{{\tpt}-free} if it does not contain an
induced subposet that is isomorphic to {\tptp}, the union of two
disjoint 2-element chains. Fishburn~\cite{fishburn} showed that a poset
is {\tpt}-free precisely when it is isomorphic to an interval order.
Amongst other results concerning {\tpt}-free posets
\cite{FISH_BOOK,FISH_OPER, SKANDERA, ZAHAR}, the following
characterisation plays an important role in this paper: a poset is
{\tpt}-free if and only if the collection of strict principal
down-sets can be linearly ordered by inclusion \cite{bogart}.
Precise definitions will be given in Sections~\ref{sec:poset} and~\ref{sec:chord}. 

The class of permutations we consider will be defined in
Section~\ref{sec:asc-av}, 
together with ascent sequences.  
Essentially, it is a class of permutations that {\em{avoid}} a
particular pattern of length three.  This type of pattern is new in
the sense that it does not admit an expression in terms of the
vincular\footnote{Babson and Steingr\'imsson call these patterns
``generalized'' rather than ``vincular'', but we wish to promote a
change of terminology here, since vincular is more descriptive. The
adjective vincular is derived from the Latin noun vinculum (``bond''
in English).} patterns introduced by Babson and
Steingr\'imsson~\cite{BABSON_EINAR}.
An attractive property of these new patterns is that, like classical
patterns, they are closed under the action of the symmetry group of
the square. Vincular patterns do not enjoy this property.  We show how
to construct (and deconstruct) these permutations element by element,
and how this gives a bijection $\Lambda$ with ascent sequences.

  \begin{figure}[t!]
   \input{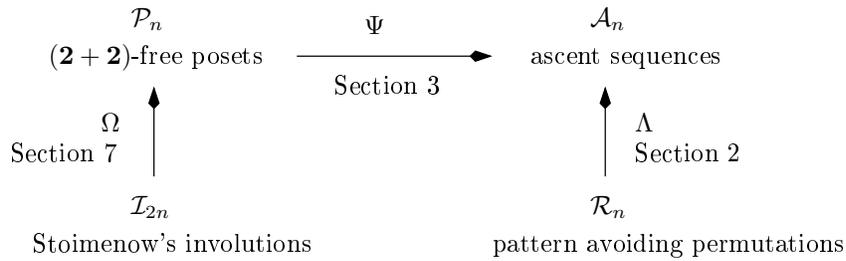}
    \caption{The bijections of the paper.}
    \label{fig:outline}
  \end{figure}

In Section~\ref{sec:poset} we 
perform a similar task for {\tpt}-free posets. We present a recursive
construction of these posets, more sophisticated than that of permutations,
which gives a bijection $\Psi$ with ascent sequences.

In Section~\ref{sec:mod} we present a simple algorithm that, given an
ascent sequence $x$, computes what we call the modified ascent
sequence, denoted $\widehat x$. Some of the properties of the 
permutation and the poset corresponding to $x$ are
more easily read from $\widehat x$ than from $x$. We also explain how
to go directly between a given poset and the corresponding permutation
as opposed to via the ascent sequence. As an additional application of
our machinery we show that the fixed points under $x\mapsto\widehat x$
are in one-to-one correspondence with permutations avoiding the barred
pattern $3{\bar 1}52{\bar 4}$. We use this characterization to
count these permutations,  thus proving a conjecture
of  Pudwell~\cite{lara}.

In Section~\ref{sec:stat} we prove that the bijections
$\Lambda$ and $\Psi$ respect numerous natural statistics.

In Section~\ref{sec:gf} we determine the \gf\ of ascent sequences, and
thus, of \tpt-free posets and pattern avoiding permutations.
Several authors have tried to count these posets
before~\cite{haxell,ZAHAR,smkhamis}, but did not obtain a closed
expression for the \gf, which turns out to be a rather
complicated, non-D-finite series.  
That our approach succeeds probably relies on the simple structure of
ascent sequences.  

The \gf\ we obtain for \tpt-free posets has, however, already appeared in the literature:
it was shown by Zagier~\cite{zagier} to count certain involutions (or
chord diagrams) introduced by Stoimenow to give upper
bounds on the  
dimension of the space of Vassiliev's knot invariants of a given
degree~\cite{stoim}. In Section~\ref{sec:chord} we present an
alternative proof of Zagier's result by giving 
 a direct bijection $\Omega$ between \tpt-free posets and Stoimenow's
 involutions. 

Finally,  in Section~\ref{sec:open} we state some  natural questions.

Let us conclude with a few words on the genesis of this paper: we
started with an investigation of permutations avoiding our new type of
pattern. Patterns of length 2 being trivial, we moved to length 3, and
discovered that the numbers counting one of our permutation classes
formed the rather mysterious sequence A022493 of the on-line
Encyclopedia of Integer Sequences~\cite{sloane}. From this arose the
curiosity to clarify the connections between this class of
permutations and
\tpt-free posets, but also between these posets and Stoimenow's
involutions, as this had apparently not been done before.  We hope
that the study of these new pattern-avoiding permutations will lead to
other connections with interesting objects.

\section{Ascent sequences and pattern avoiding permutations}
\label{sec:asc-av}

Let $(x_1,\dots , x_i)$ be an integer sequence. The number of
{\em{ascents}} of this sequence is
\begin{equation*}
  \asc(x_1,\dots , x_{i}) = |\{\,1\leq j <i\,:\, x_j<x_{j+1}\,\}|.
\end{equation*}
Let us call a sequence $x=(x_1,\dots , x_n ) \in \mathbb{N}^n$ an
{\em{ascent sequence of length $n$}} if it satisfies $x_1=0$ and $x_i
\in [0,1+\asc(x_1,\dots , x_{i-1})]$ for all $2\leq i \leq n$. For
instance, (0, 1, 0, 2, 3, 1, 0, 0, 2) is an ascent sequence.
The length (number of entries) of a sequence $x$ is denoted $|x|$.

Let $\sym_n$ be the symmetric group on $n$ elements.  Let
$V=\{v_1,v_2,\dots,v_n\}$ with $v_1<v_2<\dots<v_n$ be any finite
subset of $\NN$. The \emph{standardisation} of a permutation $\pi$ on
$V$ is the permutation $\st(\pi)$ on $[n]
:=\{1, 2, \dots, n\}$ obtained from $\pi$ by
replacing the letter $v_i$ with the letter $i$. As an example,
$\st(19452) = 15342$. Let $\R_{n}$ be 
the following set of permutations:
\begin{equation*}
\R_{n} = \{\,\pi_1\dots\pi_n \in \sym_n : 
\text{ if $\st(\pi_i\pi_j\pi_k)=231$ then $j\neq i+1$ or $\pi_i\neq \pi_k+1$}
\,\}.
\end{equation*}
Equivalently, if $\pi_i\pi_{i+1}$ forms an ascent, then $\pi_i-1$ is
not found to the right of this ascent.
This class of permutations could be more descriptively written as
$\R_{n} = \sym_n\!\left(\pattern\right)$,
the set of permutations \emm avoiding, the pattern in the
diagram. 
Dark lines indicate adjacent entries (horizontally or vertically),
whereas lighter lines indicate an elastic distance between the entries.
Conversely, $\pi$ \emm contains, this pattern if there exists $i<k$ such
that $\pi_k+1=\pi_i<\pi_{i+1}$.  As
illustrated below, the permutation 31524 avoids the pattern 
$\pattern$ while the permutation 32541 
contains it.
$$
\begin{minipage}[c]{16em}\scalebox{0.6}{\includegraphics{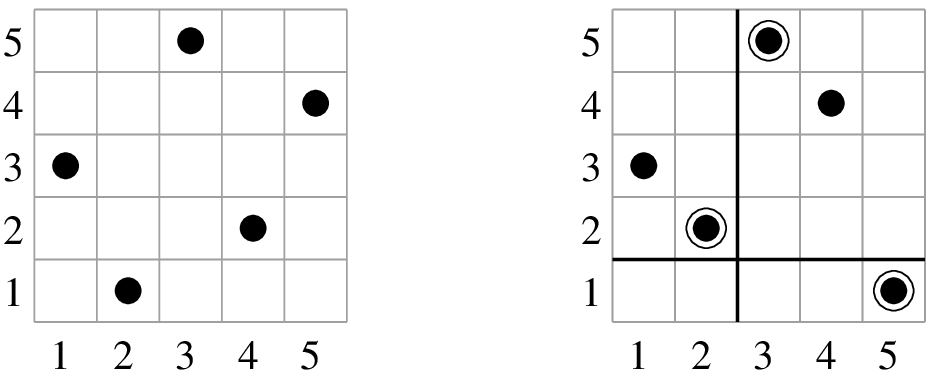}}
\end{minipage}
$$ 

Clearly, this  example can be generalized to
any pattern consisting of a permutation plus some dark (vertical and
horizontal) lines.  Vertical   lines represent a constraint of
adjacency of the \emm positions,, while horizontal lines represent a
constraint of adjacency of the \emm values,. When there is no dark
line, we recover the  standard notion of containment of
a \p. When only  vertical lines are allowed, that is, constraints on
the positions, we recover the \emm vincular, (or \emm generalized,)
patterns of  Babson and Steingr\'imsson~\cite{BABSON_EINAR}. For
symmetry reasons, it seems natural to allow constraints on values as
well, and this  is precisely  what our \emm bivincular patterns,,
defined formally below, achieve.

Let us  now give a formal 
definition of bivincular
patterns. This is not needed for the rest of this paper, and the
reader may, without loss of continuity, skip 
the next three paragraphs.
We define a \emph{bivincular permutation} (or {\emph{bivincular pattern}}) to be
a triple
$p=(\sigma,X,Y)$, where $\sigma$ is a permutation on $[k]$ and $X$ and
$Y$ are subsets of $[0,k]$. An occurrence of $p$ in a permutation
$\pi=\pi_1\dots\pi_n$ on $[n]$ is subsequence $o =
\pi_{i_1}\dots\pi_{i_k}$ such that $\st(o)=\sigma$ and
$$
\forall x\in X,\, i_{x+1} = i_x+1
\quad\text{and}\quad
\forall y\in Y,\, j_{y+1} = j_y+1,
$$ where $\{\pi_{i_1},\dots,\pi_{i_k}\}=\{j_1,\dots, j_k\}$ and
$j_1<j_2<\dots<j_k$; by convention, $i_0=j_0=0$ and
$i_{k+1}=j_{k+1}=n+1$. With this definition we have $\R_n =
\sym_n\big((231,\{1\},\{1\})\big)$. Note also that the number of
bivincular permutations of length $n$ is $4^{n+1}n!$.

The classical patterns are those of the form
$p=(\sigma,\emptyset,\emptyset)$. Vincular patterns are of the form
$p=(\sigma,X,\emptyset)$. Let $p=(\sigma,X_p,Y_p)$ and
$q=(\tau,X_q,Y_q)$ be any two patterns.  If $\si$ and $\tau$ have the
same length, we define their composition, or product, by $p\circ q =
(\,\sigma\circ\tau,\, X_p\Delta Y_q,\, Y_p\Delta X_q\,)$, where
$A\Delta B = (A-B) \cup (B-A)$ is the symmetric difference.  This
operation is not associative, but it admits a right identity,
$(\mathrm{id},\emptyset,\emptyset)$, and every element
$p=(\sigma,X,Y)$ has an inverse $p^{-1} = (\sigma^{-1},Y,X)$; this
turns the set of bivincular permutations of length $n$ into a
quasigroup with right identity. Also, reverse is defined by
$p^r=(\sigma^r,n+1-X,Y)$ and complement is defined by $p^c =
(\sigma^c,X,n+1-Y)$, in which $k-A$ denotes the set $\{k-a : a\in
A\}$. 
Thus the set of bivincular patterns has the full symmetry of a square.

One simple instance of bivincular pattern avoidance that has already
appeared in the literature is the set of \emm irreducible,
permutations~\cite{albert}, that is, permutations such that
$\pi_{i+1} \not = \pi_i -1$ for all $i$. With our terminology, these
are the permutations avoiding $(21, \{1\},\{1\})$. Similarly, the \emm
strongly irreducible, permutations of
\cite{atkinson} are the $(21, \{1\},\{1\})$- and $(12, \{1\},\{1\})$-avoiding 
permutations.

Let us now return to the set $\R:=\cup_n \R_{n}$ of permutations
avoiding $(231, \{1\},\{1\})$.  Let $\pi$ be a permutation of
$\R_{n}$, with $n>0$. Let $\tau$ be obtained by deleting the entry $n$
from $\pi$. Then $\tau \in \R_{n-1}$. Indeed, if
$\tau_i \tau_{i+1} \tau_j$ is an occurrence of the forbidden pattern
in $\tau$ (but not in $\pi$), then this implies that
$\pi_{i+1}=n$. But then $\pi_i \pi_{i+1} \pi_{j+1}$ would form an
occurrence of the forbidden pattern in $\pi$.

This property allows us to construct the permutations of $\R_{n}$ inductively,
starting from the empty permutation and adding a new maximal value at each
step.  
(This is  the \emm generating tree, approach, systematized by West~\cite{west-trees}.)
Given $\tau = \tau_1 \dots \tau_{n-1} \in \R_{n-1}$, the sites
where $n$ can be inserted in $\tau$ so as to produce an element of
$\R_{n}$ are called \emm active,.  It is easily seen that the site
before $\tau_1$ and the site after $\tau_{n-1}$ are always active.
The site between the entries $\tau_i$ and $\tau_{i+1}$ is {active} if
and only if $\tau_i=1$ or $\tau_i - 1$ is to the left of $\tau_i$.
Label the active sites, from left to right, with labels 0, 1, 2 and so
on.  Observe that the site immediately to the left of the maximal
entry of $\tau$ is always active.

Our bijection $\Lambda$ between permutations of  $\R_{n}$ and ascent
sequences of length  $n$  
is defined recursively on $n$ as follows. For $n=1$, we set
$\Lambda(1)=(0)$.  Now let $n \ge 2$, and suppose that $\pi \in
\R_{n}$ is obtained by inserting $n$ in the active site labeled $i$ of
a permutation $\tau \in \R_{n-1}$. Then the sequence associated with $\pi$ is
$\Lambda(\pi):=(x_1, \dots, x_{n-1}, i)$, where $(x_1,
\dots,x_{n-1})=\Lambda(\tau)$.

\begin{example}
  The permutation  $\pi = 6 1 8 3 2 5 4 7$ corresponds to the sequence
  $x=(0,1,1,2,2,0,3,1)$,   since it is obtained by the following
  insertions (the subscripts indicate the labels of the active
  sites):
  \begin{align*}
    _0 1 _1 
    &\,\xmapsto{x_2=1}\,  {_0} 1 {_1} 2 {_2} \\
    &\,\xmapsto{x_3=1}\,  {_0} 1 {_1} 3\nyip 2 _2 \\
    &\,\xmapsto{x_4=2}\,  {_0} 1 {_1} 3\nyip 2 {_2} 4 {_3} \\
    &\,\xmapsto{x_5=2}\,  {_0} 1 {_1} 3\nyip 2 {_2} 5\nyip 4 {_3}  \\
    &\,\xmapsto{x_6=0}\,  {_0} 6\nyip 1 {_1} 3\nyip 2 {_2} 5\nyip 4 {_3} \\
    &\,\xmapsto{x_7=3}\,  {_0} 6\nyip 1 {_1} 3\nyip 2 {_2} 5\nyip 4 {_3} 7 {_4} \\
    &\,\xmapsto{x_8=1}\, \nyip 6\nyip 1\nyip 8\nyip 3\nyip 2\nyip 5\nyip 4\nyip 7.
  \end{align*}
\end{example}

\begin{theorem}\label{th:lambda}
\label{av-asc}
  The map $\Lambda$ is a bijection from  $\R_{n}$ to the set of  ascent
  sequences of length $n$.
\end{theorem}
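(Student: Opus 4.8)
The plan is to prove, by induction on $n$, three statements at once for every $\pi\in\R_n$:
(a) $\Lambda(\pi)$ is an ascent sequence of length $n$;
(b) $\pi$ has exactly $2+\asc(\Lambda(\pi))$ active sites, so that its active sites receive the labels $0,1,\dots,1+\asc(\Lambda(\pi))$;
(c) the active site of $\pi$ lying immediately to the left of its maximal entry $n$ carries the label equal to the last entry of $\Lambda(\pi)$.
The case $n=1$ is clear. For the inductive step I use the correspondence set up above: every $\pi\in\R_n$ is obtained in a unique way by inserting $n$ at some active site, with label $i$, of a unique $\tau=\pi\setminus n\in\R_{n-1}$, and then $\Lambda(\pi)=(\Lambda(\tau),i)$. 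Part (b) applied to $\tau$ forces $0\le i\le 1+\asc(\Lambda(\tau))$, which is precisely the requirement making $(\Lambda(\tau),i)$ an ascent sequence, so (a) holds for $\pi$.

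The core of the argument is then to see how the set of active sites changes under this insertion and to match this against how $\asc$ changes when one appends $i$ to $\Lambda(\tau)$. Only the sites neighbouring the new entry $n$ are affected, and there are two cases. Let $m$ denote the last entry of $\Lambda(\tau)$. If $i$ is not the rightmost active site of $\tau$, then the site just to the left of $n$ is again active with label $i$ (it is the new first site when $i=0$, and otherwise it is active by exactly the local condition that made site $i$ active in $\tau$), while the site just to the right of $n$ is active precisely when $n-1$ lies to the left of $n$ in $\pi$; by part (c) for $\tau$, the entry $n-1$ sits just to the right of the site of $\tau$ labelled $m$, so this happens if and only if $i>m$. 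Hence the number of active sites grows by $1$ when $i>m$ and stays the same otherwise, which is exactly how $\asc(\Lambda(\tau),i)$ compares with $\asc(\Lambda(\tau))$; so (b) is preserved, and (c) for $\pi$ follows by locating $n$. If $i$ \emph{is} the rightmost active site of $\tau$, inserting $n$ at the very end replaces that (always active) last site by two active sites: the new last site after $n$, and the site between $\tau_{n-1}$ and $n$, which is active here because either $\tau_{n-1}=1$ or $\tau_{n-1}-1$ is forced to occur before the penultimate position of $\pi$, hence to the left of $\tau_{n-1}$. So the number of active sites goes up by exactly $1$; to reconcile this with $\asc$ I need that appending the maximal label always creates an ascent, i.e.\ $m<1+\asc(\Lambda(\tau))$, which follows from the elementary fact — a one-line induction — that the last entry of any ascent sequence $(z_1,\dots,z_r)$ is at most $\asc(z_1,\dots,z_r)$. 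Again (c) for $\pi$ is read off from the position of $n$.

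Once (a)--(c) are established, bijectivity is formal. For injectivity, $\Lambda(\pi)=\Lambda(\pi')$ forces $\pi\setminus n=\pi'\setminus n$ by the induction hypothesis and then equality of the two insertion sites, whence $\pi=\pi'$. For surjectivity, given an ascent sequence $(x_1,\dots,x_n)$, the induction hypothesis yields $\tau\in\R_{n-1}$ with $\Lambda(\tau)=(x_1,\dots,x_{n-1})$; by (b) the active site of $\tau$ labelled $x_n$ exists, since $x_n\le 1+\asc(x_1,\dots,x_{n-1})$, and inserting $n$ there produces a preimage of $(x_1,\dots,x_n)$. I expect the real obstacle to be the case analysis in the middle paragraph — especially the realisation that the rightmost active site must be handled separately (it is unconditionally active in $\tau$, yet its counterpart in $\pi$ obeys the general activity rule) and the isolation of the small lemma on ascent sequences that makes the two counts close up.
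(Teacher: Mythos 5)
Your proof is correct and follows essentially the same route as the paper: an induction maintaining the two invariants that the number of active sites is $2+\asc(\Lambda(\pi))$ and that the site just left of the maximum carries the label $\last(\Lambda(\pi))$, together with the observation that only the sites adjacent to the inserted entry $n$ can change status. The only difference is organizational — the paper splits on $i\le b(\tau)$ versus $i>b(\tau)$, while you isolate the rightmost-site insertion as a separate case and invoke the small lemma that the last entry of an ascent sequence is at most its ascent number; this is a legitimate (slightly more careful) repackaging of the same argument.
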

\begin{proof}
Since the sequence $\Lambda(\pi)$ encodes the construction of $\pi$,
the map $\Lambda$ is injective. We want to prove that the image of
$\R_n$ is the set $\A_n$ of ascent sequences of length $n$.  Let
$s(\pi)$ denote the number of active sites of the permutation $\pi$. Our
recursive description of the map $\Lambda$ tells us that $x=(x_1,
\dots,x_n)\in \Lambda(\R_n)$ if and only if
\begin{equation}\label{description}
x'=(x_1, \dots, x_{n-1})\in \Lambda(\R_{n-1}) \quad 
\text{ and } \quad 0\le x_n \le s\left(\Lambda^{-1}( x')\right)-1
\end{equation}
(recall that the leftmost active site is labeled 0, so that the
rightmost one is  $s(\pi)-1$).

We will prove by induction on $n$ that for all $\pi \in \R_n$, with
associated sequence $\Lambda(\pi)=x=(x_1, \dots, x_n)$, one has
\begin{equation}\label{properties}
s(\pi)=2+\asc(x)  \quad 
\text{ and } \quad b(\pi)= x_n,
\end{equation}
where $b(\pi)$ is the label of the site located just before the
maximal entry of $\pi$. Clearly, this will convert the above
description~\eqref{description} of $ \Lambda(\R_n)$ into the
definition of ascent sequences, thus concluding the proof.

So let us focus on the properties~\eqref{properties}. They obviously
hold for $n=1$. Now assume they hold for some $n-1$, with $n\ge 2$,
and let $\pi \in \R_n$ be obtained by inserting $n$ in the active site
labeled $i$ of $\tau \in \R_{n-1}$. Then $\Lambda(\pi)= x=(x_1,
\dots, x_{n-1},i)$ where $\Lambda(\tau)= x'=(x_1, \dots,
x_{n-1})$. Every entry of $\pi$ smaller than $n$
is followed in $\pi$ by an active site if and only if it was followed in $\tau$ by
an active site.  The leftmost site also remains active. Consequently,
the label of the active site preceding $n$ in $\pi$ is $i=x_n$, which
proves the second property. Thus,
in order to determine $s(\pi)$,
 the only question is whether the site
following $n$ is active in $\pi$.  There are two cases to
consider. Recall that, by the induction hypothesis,
$s(\tau)=2+\asc(x')$ and $b(\tau)=x_{n-1}$.
  
{\sf Case 1:}\, If $0\leq i\leq b(\tau)=x_{n-1}$ then
$\asc(x)=\asc(x')$ and the entry $n$ in
$\pi$ is to the left of $n-1$. So the number of active sites remains
unchanged: $s(\pi) = s(\tau)=2+\asc(x')=2+\asc(x)$.
    
{\sf Case 2:}\, If $i>b(\tau)=x_{n-1}$ then $\asc(x)=1+\asc(x')$ and
the entry $n$ in $\pi$ is to the right of $n-1$.  The site that
follows $n$ is thus active, and $s(\pi ) =
1+s(\tau)=3+\asc(x')=2+\asc(x)$.  This concludes the proof.
\end{proof}

\section{Ascent sequences and unlabeled {\tpt}-free posets}
\label{sec:poset}
Let $\P_n$ be the set of unlabeled {\tpt}-free posets on $n$ elements.
In this section we shall give a bijection between $\P_n$ and the set
$\A_n$ of ascent sequences of length $n$.
As in the previous section, this bijection encodes a recursive way
of constructing \tpt-free posets by adding one new (maximal)
element. There is of course  a corresponding removal operation, but it
 is less elementary than in the case of permutations.
Before giving these operations we need to define some terminology.

Let $D(x)$ be the set of \emm predecessors, of $x$ (the strict down-set of
$x$). Formally,
$$
D(x)=\{\, y : y<x \,\}.
$$ 
It is well-known---see for example
Bogart~\cite{bogart}---that a poset is {\tpt}-free if and only if
its sets of predecessors, $\{D(x) : x\in P\}$, can be linearly ordered
by inclusion. For completeness we prove this result here. 
\begin{lemma}
A poset $P$ is {\tpt}-free
if and only if
the set of strict downsets of $P$ can be linearly ordered by inclusion.
\end{lemma}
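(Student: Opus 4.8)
The plan is to prove the two implications separately, each by contraposition, working directly from the definition of {\tptp} as an induced subposet that is the disjoint union of two $2$-element chains.

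\emph{From linearly ordered downsets to {\tpt}-freeness.} This direction is immediate. Suppose $P$ contains an induced copy of {\tptp}, realized by distinct elements $a<x$ and $b<y$ with no other order relations among the four. Since $a$ is incomparable to $y$ we have $a\in D(x)\setminus D(y)$, and since $b$ is incomparable to $x$ we have $b\in D(y)\setminus D(x)$. Hence neither $D(x)\subseteq D(y)$ nor $D(y)\subseteq D(x)$, so the downsets are not linearly ordered by inclusion.

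\emph{From {\tpt}-freeness to linearly ordered downsets.} Here I would argue the contrapositive: if $D(x)$ and $D(y)$ are incomparable for some $x,y\in P$, pick $a\in D(x)\setminus D(y)$ and $b\in D(y)\setminus D(x)$, and show that $\{a,b,x,y\}$ induces a copy of {\tptp} with chains $a<x$ and $b<y$. The content of this step is a short verification that these four elements are pairwise distinct and that $a<x$ and $b<y$ are the only comparabilities among them. Every potentially offending relation is killed by transitivity against one of the four facts $a<x$, $b<y$, $a\not<y$, $b\not<x$: for example $y<a$ would give $b<y<a<x$ and hence $b\in D(x)$; $x<y$ would give $a<x<y$ and hence $a\in D(y)$; $a<b$ would give $a<b<y$ and hence $a\in D(y)$; and coincidences such as $a=y$ or $x=y$ are excluded in the same way. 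Producing the induced {\tptp} contradicts {\tpt}-freeness, so $D(x)$ and $D(y)$ must be comparable.

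The routine but slightly delicate point, and the only place that needs care, is the case analysis in the second implication: one must run through all ordered pairs from $\{a,b,x,y\}$, together with the possible equalities, and confirm each is impossible. Since every case reduces to a two- or three-step transitivity chain contradicting one of the membership statements above, this presents no real obstacle.
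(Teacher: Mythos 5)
Your proof is correct and follows essentially the same route as the paper: one direction reads off the two incomparable downsets directly from an induced copy of $\mathbf{2+2}$, and the other takes $a\in D(x)\setminus D(y)$, $b\in D(y)\setminus D(x)$ and checks that $\{a,b,x,y\}$ induces a $\mathbf{2+2}$. The only difference is that you spell out the transitivity case analysis that the paper leaves implicit.
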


\begin{proof}
If the set of strict downsets of $P$ cannot be linearly ordered by
inclusion, then there are two incomparable elements $x,y \in P$ such
that both $D(x)\setminus D(y)$ and $D(y)\setminus D(x)$ are non-empty.
Let $x'\in D(x)\setminus D(y)$ and $y' \in D(y) \setminus D(x)$. Then
the induced subposet on the elements $\{x,x',y,y'\}$ is isomorphic to
{\tpt}. Conversely, if $P$ contains an induced subposet $\{x>x',
y>y'\}$ isomorphic to {\tpt}, then $D(x)$ and $D(y)$ are such that
both $D(x)\setminus D(y)$ and $D(y)\setminus D(x)$ are non-empty.
\end{proof}
Let
$$
 D(P) = \{D_0,D_1,\dots,D_{k}\}
$$ 
with $\emptyset= D_0\subset D_1\subset\dots\subset D_{k}$.
In this context we define $D_i(P)=D_i$ and we write $\rank(P) = k$.
We say the element $x$ is at level
$i$ in $P$ if $D(x)=D_i$ and we write
$\rank(x)=i$ .  The set of all elements at 
level  $i$ we denote $L_i(P)
= \{\, x\in P : \rank(x)=i \,\}
= \{\, x\in P : D(x)=D_i \,\}$. 
For instance, $L_0(P)$ is the set of minimal elements.
All the elements of $L_{k}(P)$ are maximal, but there may be maximal
elements of $P$ at level less than $k$. 
If $L_i(P)$ contains a maximal element, we say that \emm the
level, $i$ contains a maximal element. 
Let $\srank(P)$ be the minimum level containing a maximal element.

\begin{example}
Consider the following {\tpt}-free poset $P$,
which we have labeled for convenience:
$$
\begin{minipage}{5.2em}
  \includegraphics[scale=0.6]{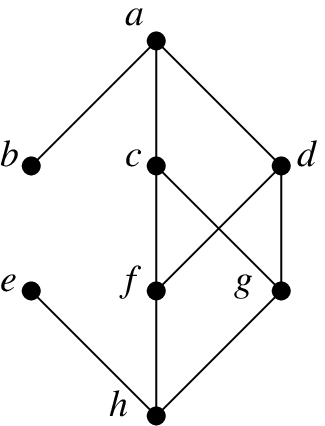}
\end{minipage}
\quad = 
\quad
\begin{minipage}{4em}
  \includegraphics[scale=0.6]{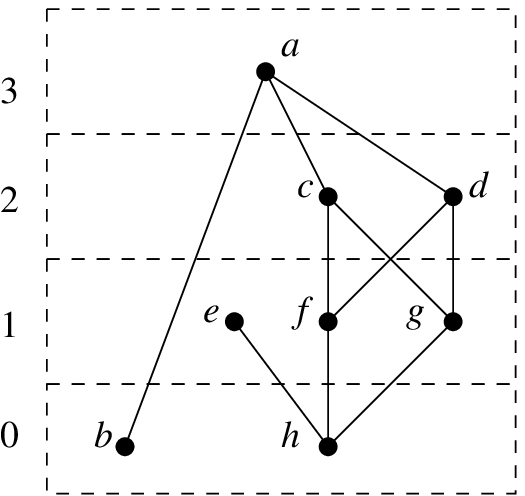}
\end{minipage}
\qquad\quad
$$ 
The diagram on the right shows the poset redrawn according to the
levels of the elements.
 We have $D(a)
=\{b,c,d,f,g,h\}$, $D(b) =\emptyset$, $D(c) = D(d) = \{f,g,h\}$, $D(e)
= D(f) = D(g) =\{h\}$ and $D(h) =\emptyset$. These may be ordered by
inclusion as
$$
\begin{array}{ccccccc} 
  \underbrace{D(h) = D(b)} &\!\!\!\subset\!\!\!&
  \underbrace{D(e) = D(f) = D(g)} &\!\!\!\subset\!\!\!& 
  \underbrace{D(c) = D(d)} &\!\!\!\subset\!\!\!& 
  \underbrace{D(a)}. \\[1.8ex] 
  \rank(h)=\rank(b)=0 &&
  \rank(e)=\rank(f)=\rank(g)=1 && 
  \rank(c)=\rank(d)=2 && 
  \rank(a)=3
\end{array}
$$ 
Thus $\rank(P)=3$.
 The maximal elements of $P$ are $e$ and
$a$, and they lie respectively at levels 3 and 1. 
Thus $\srank(P)=1$. In addition, $D_0=\emptyset$, $D_1=\{h\}$,
$D_2=\{f,g,h\}$ and $D_3=\{b,c,d,f,g,h\}$. With $L_i = L_i(P)$ we also
have $L_0=\{h,b\}$, $L_1=\{e,f,g\}$, $L_2=\{c,d\}$ and $L_3=\{a\}$.
\end{example}

\subsection{Removing an element from a \tpt-free poset}\label{sec:rem}

Let us begin with the removal operation, which will be the counterpart
of the deletion of the last entry in an ascent sequence (or the
deletion of the largest entry in a permutation of $\R$). 
Let $P$ be a \tpt-free poset of cardinality $n\ge 2$, and let
$i=\srank(P)$ be the minimum level of $P$ containing a maximal
element. All the maximal elements located at level $i$ are
order-equivalent in the unlabeled poset $P$. We will 
remove  one of them.
Let $Q$ be the poset that results from applying:
\begin{enumerate}
\item[(\subi)] If $|L_i(P)|>1$
  then simply remove one of the maximal elements at level $i$. 
 
\item[(\subii)] If  $|L_i(P)|=1$ and $i=\rank(P)$ then remove the unique
element lying at level $i$.

\item[(\subiii)] If  $|L_i(P)|=1$ and  $i<\rank(P)$ 
then set $\N=D_{i+1}(P)\setminus D_{i}(P)$.
  Make each element in
  $\N$ a maximal element of the poset by deleting
from the order all relations $x < y$ where $x \in \N$.
Finally, remove the unique element lying at level $i$.
\end{enumerate}

\begin{example}\label{ex:rem}
Let $P$ be the unlabeled {\tpt}-free poset with this Hasse 
diagram:
$$
\begin{minipage}{8.6em}
  \includegraphics[scale=0.85]{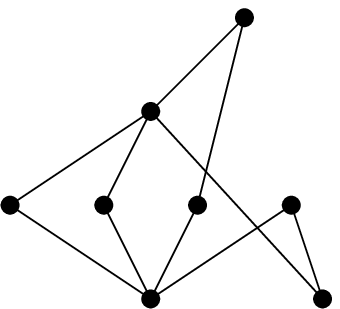}
\end{minipage} =\quad 
\begin{minipage}{10em}
  \scalebox{0.70}{\includegraphics{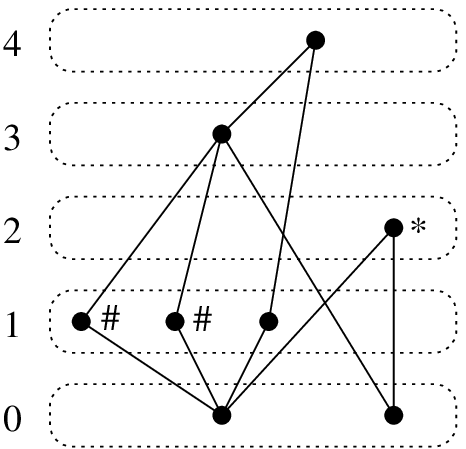}}
\end{minipage}
$$
The diagram on the right shows the poset redrawn according to the
levels of the elements.  There is a unique maximal element of minimal
level, which is marked with $*$ and lies at level 2, so that
$\srank(P)=2$.  Since there is a unique element at level $2<\rank(P)$,
apply {\subiii} to remove it.  The elements of $\N$ are indicated
by \#'s.  In order to delete all relations of the form $x<y$ where
$x\in \N$, one deletes from the Hasse diagram all edges corresponding
to coverings of elements of $\N$, and adds an edge between the
elements at level 0 and 3 to preserve their relation. Finally, one
removes the element at level 2.
This gives a new \tpt-free poset, with
level numbers shown on the left.
$$\mapsto\quad
\begin{minipage}{10em}\scalebox{0.70}
  {\includegraphics{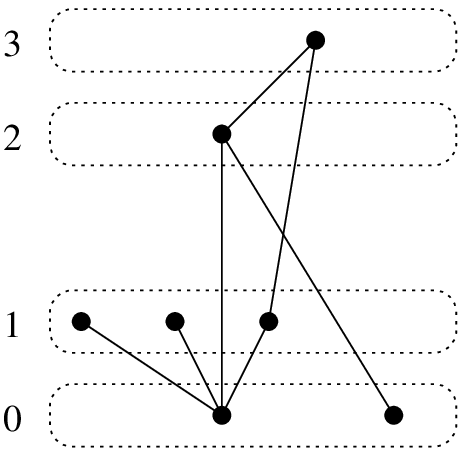}}
\end{minipage} =\quad
\begin{minipage}{10em}
  \scalebox{0.70}{\includegraphics{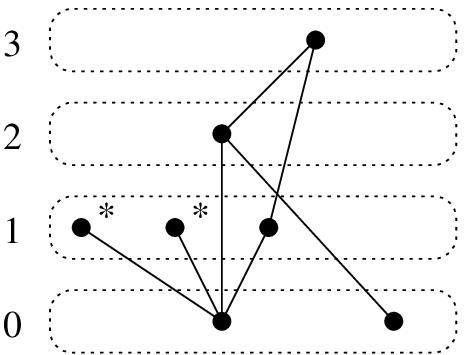}}
\end{minipage}
$$
There are now two maximal elements of minimal level $\srank=1$, both marked
 by $*$. Remove one of them according to rule {\subi}.
This gives the poset shown on the left below, for which $\srank$ is still
1. Apply {\subi} again to obtain the  poset on the right.
$$
\mapsto\quad
\begin{minipage}{8.5em}
  \scalebox{0.70}{\includegraphics{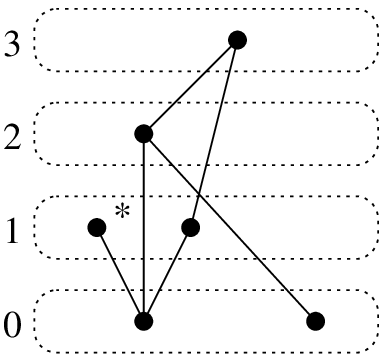}}
\end{minipage}
\mapsto \quad
\begin{minipage}{7.4em}
  \scalebox{0.70}{\includegraphics{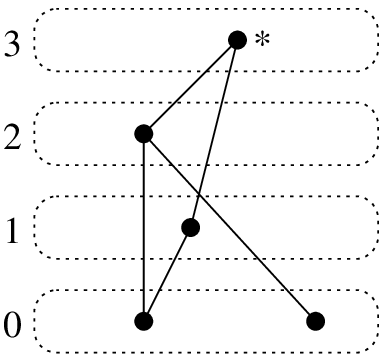}}
\end{minipage}
$$
There is now a single maximal element, lying at maximal level 3,
so we apply rule {\subii}:
$$
\mapsto\quad
\begin{minipage}{10em}
  \scalebox{0.70}{\includegraphics{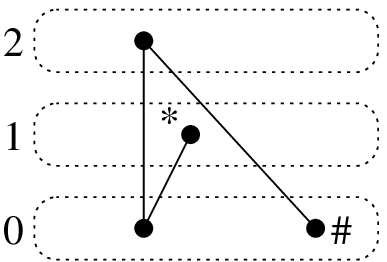}}
\end{minipage}
$$
The maximal element of minimal level
is now alone on level $\srank(P)=1<\rank(P)$ so
 apply {\subiii}. The set  $\N$ consists of the rightmost point at
level 0, giving
$$
\mapsto\quad
\begin{minipage}{9.5em}
  \scalebox{0.70}{\includegraphics{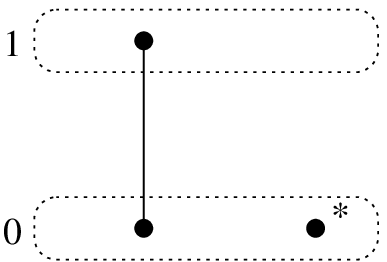}}
\end{minipage}
$$
The maximal element of minimal level
 is not alone at level  0, so  apply {\subi}:
$$
\mapsto\quad
\begin{minipage}{6.3em}
  \scalebox{0.70}{\includegraphics{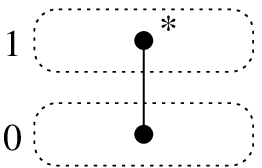}}
\end{minipage} \mapsto\;\;\;
\begin{minipage}{4.5em}
  \scalebox{0.70}{\includegraphics{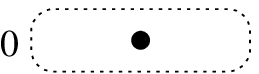}}
\end{minipage}
$$
We have thus reduced the original poset $P$ to a one element poset by
 removing the elements in a canonical order.
\end{example}

Let us now check that the removal operation gives a \tpt-free poset,
and establish some elementary properties of this operation. If
$\srank(P)=i$, and the removal operation, applied to $P$, gives $Q$,
we define  $\remove(P)=(Q,i)$. 

\begin{lemma}\label{remove}
  If $n\geq 2$, $P\in\P_n$ and $\remove(P)=(Q,i)$, then $Q\in\P_{n-1}$
  and $0\leq i\leq 1+\rank(Q)$. Also,
  $$
  \rank(Q) = 
  \begin{cases}
    \rank(P)   & \text{if } i \leq \srank(Q),\\
    \rank(P)-1 & \text{if } i > \srank(Q).
  \end{cases}
  $$
\end{lemma}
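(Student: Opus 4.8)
The statement concerns the removal operation $\psi(P) = (Q,i)$, and asks us to verify three things: (a) that $Q$ is again $(\mathbf{2+2})$-free on $n-1$ elements; (b) that $0 \le i \le 1 + \rank(Q)$; and (c) the explicit formula for $\rank(Q)$ in terms of $\rank(P)$ and $\srank(Q)$. The natural approach is a case analysis following exactly the three cases $(\subi)$, $(\subii)$, $(\subiii)$ in the definition of the removal operation, checking each of (a), (b), (c) in each case. The main tool for (a) is the Lemma proved just above: a poset is $(\mathbf{2+2})$-free iff its strict down-sets are linearly ordered by inclusion. So in each case I need to identify the new chain of down-sets $D(Q)$ and confirm it is still totally ordered.

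**Case-by-case for $(\mathbf{2+2})$-freeness and the rank formula.** In case $(\subi)$ we remove one of several order-equivalent maximal elements at level $i = \srank(P)$; the remaining elements keep all their down-sets (the removed element was maximal, so it lies in no down-set $D(x)$ for $x \ne$ it, and the other elements of $L_i$ still realize the down-set $D_i$), so $D(Q) = D(P)$ as a chain; hence $Q$ is $(\mathbf{2+2})$-free and $\rank(Q) = \rank(P)$. Here $i = \srank(P) \le \srank(Q)$ (removing a maximal element cannot lower the minimum level of a maximal element, and in fact $\srank(Q)=i$ since the twin is still there), so we are in the first branch of the formula, consistent with $\rank(Q)=\rank(P)$. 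In case $(\subii)$ we have $i = \srank(P) = \rank(P) = k$ and $L_k = \{z\}$ a singleton; removing $z$ deletes the top down-set $D_k$ only if no other element realizes it — and indeed $L_k = \{z\}$ means no other element has down-set $D_k$ — so $D(Q) = \{D_0,\dots,D_{k-1}\}$, still a chain, $Q$ is $(\mathbf{2+2})$-free, and $\rank(Q) = k-1 = \rank(P)-1$. One must check $i = k > \srank(Q)$: after removing $z$, the maximal elements of $Q$ are precisely those maximal elements of $P$ other than $z$, all of which sit at level $< k = \srank(P) \le i$... wait, $\srank(P)=k$ means *every* maximal element of $P$ sits at level $\ge k$, so in fact $z$ was the unique maximal element of $P$; then the maximal elements of $Q$ are whatever becomes maximal after deleting $z$, all at level $\le k-1 < i$, so $\srank(Q) \le k-1 < k = i$, the second branch, consistent. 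Case $(\subiii)$ is the delicate one: $L_i = \{z\}$, $i < k$, we lift the elements of $\N = D_{i+1}\setminus D_i$ to be maximal by deleting all relations $x < y$ with $x \in \N$, then delete $z$. I need to argue that the resulting family of down-sets is still a chain. The point is that after the lift, the elements of $\N$ all have empty down-set in $Q$; and for an element $w$ previously at level $> i$ (other than $z$), its new down-set is $D(w) \setminus \N = D(w) \cap D_i = D_i$ if $D(w) \supseteq D_{i+1}$, i.e. levels collapse cleanly from $D_0 \subset \dots \subset D_i = D_{i+1}^{new} \subset D_{i+2}^{new} \subset \dots$; the chain structure survives because we are intersecting every set above $D_i$ with $D_i$ (equivalently removing $\N$), which preserves containments. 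So $D(Q)$ is again a chain, $Q$ is $(\mathbf{2+2})$-free, and the number of distinct down-sets drops by exactly one (the sets $D_i$ and $D_{i+1}$ merge), giving $\rank(Q) = k - 1 = \rank(P)-1$; correspondingly $i > \srank(Q)$ because the lifted elements of $\N$ are now maximal and sit at level $0 \le$ something $< i$ (indeed level $0$ after the lift, or more carefully at the new level of $D_i$), so $\srank(Q) < i$, matching the second branch.

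**The bound $0 \le i \le 1 + \rank(Q)$.** This follows immediately once the rank formula is established: $i = \srank(P)$ is a level of $P$, so $0 \le i \le \rank(P)$. In the first branch $\rank(Q) = \rank(P) \ge i$, so certainly $i \le 1 + \rank(Q)$. In the second branch $\rank(Q) = \rank(P) - 1 \ge i - 1$ (since $i \le \rank(P)$), i.e. $i \le \rank(Q) + 1$. So the bound is a formal consequence, and I would present it last, after the rank formula, rather than proving it separately.

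**Anticipated main obstacle.** The routine parts — cases $(\subi)$ and $(\subii)$ — are essentially immediate. The real work is case $(\subiii)$: one must carefully verify both that deleting the relations $x < y$ for $x \in \N$ and then removing $z$ genuinely leaves a poset (transitivity is not obviously preserved when you delete a "middle layer" of relations), and that the down-set chain collapses in exactly the claimed way so that the rank drops by precisely one. I expect the cleanest argument is to describe $Q$'s order relation directly: $u <_Q v$ iff ($u <_P v$ and $u \notin \N$ and $u,v \ne z$), and then observe that the down-sets of $Q$ are exactly $D_0 \subset \dots \subset D_i \subset (D_{i+2}\setminus\N) \subset \dots \subset (D_k \setminus \N)$, with $D_{i+1}\setminus\N = D_i$ merging into the picture — verifying this is the heart of the proof, and also implicitly re-confirms that the construction is well-defined (transitive), which the paper has so far only asserted via the examples.
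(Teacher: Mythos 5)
Your proof is correct and follows essentially the same route as the paper: a case analysis on \subi, \subii, \subiii\ using the down-set characterisation of {\tpt}-freeness, with the key computation in the third case being $D_{i+1}(P)\setminus\N = D_{i+1}(P)\cap D_i(P) = D_i(P)$, and the bound $0\le i\le 1+\rank(Q)$ read off from the rank formula. One side remark in your treatment of \subiii\ is false, though harmless: the elements of $\N$ do \emph{not} acquire empty down-sets in $Q$ --- deleting the relations $x<y$ with $x\in\N$ removes $x$ from other elements' down-sets but leaves $D(x)$ itself untouched, so each $x\in\N$ keeps its down-set $D_j(P)$ for some $j<i$ (whence $\srank(Q)<i$, not ``level $0$ after the lift''); since these sets already occur in the chain $D_0\subset\cdots\subset D_{i-1}$, your conclusion that $D(Q)$ is the chain $D_0\subset\cdots\subset D_{i-1}\subset D_i=D_{i+1}\setminus\N\subset\cdots\subset D_{\rank(P)}\setminus\N$ stands, and the rest of the argument is sound.
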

\begin{proof}
We examine separately the 3 cases described above.

If $|L_i(P)|>1$ then one simply removes a maximal element at level $i$
  to obtain $Q$: the set of sets of predecessors is unchanged, and remains
linearly   ordered. Hence $Q\in \R_{n-1}$. Also, $\rank(Q)
=\rank(P)$. 
The maximal elements of $Q$ were already maximal in $P$. Thus the
maximal elements of lowest level in $Q$ are at
level $i$ at least, that is, $\srank(Q) \ge i$. 
  
If $|L_i(P)|=1$ and $i=\rank(P)$, one removes the unique element of
maximal level. One has now $D(Q)=D(P)\setminus \{D_i(P)\}$, which is
still linearly ordered. Also, $\rank(Q) =\rank(P)-1$. In particular,
$i=\rank(Q)+1>\srank(Q)$.

Finally, if $|L_i(P)|=1$  and $i<\rank(P)$, define the set $\N$ as
in \subiii. 
By construction, the set of sets of predecessors of $Q$ is
$$
D(Q)=
\big\{\,
  D_0(P),\,\dots ,\, D_{i-1}(P),\, 
  D_{i+1}(P) \setminus \N,\,\dots ,\, D_{\rank(P)}(P) \setminus \N
\,\big\}.
  $$
To prove that $D(Q)$ can be linearly ordered, it suffices to prove
 that  $D_{i-1}(P)\subset D_{i+1}(P) \setminus \N$.
By definition, $\N=D_{i+1}(P)\setminus D_{i}(P)$ and hence
  \begin{align*}
  D_{i+1}(P)\setminus \N 
  &= D_{i+1}(P) \setminus  \big(D_{i+1}(P)\setminus D_{i}(P)\big) \\
  &= D_{i+1}(P) \cap D_{i}(P) \\
  &= D_{i}(P) \\
  &\supset D_{i-1}(P).
  \end{align*}
  It is also clear that $\rank(Q)=\rank(P)-1$.  
The elements of $\N$ are maximal in $Q$ and lie at level $<i$. Hence
 $\srank(Q) < i$. 
\end{proof}

\subsection{Adding an element to a \tpt-free poset}

Let us now define the addition operation, which adds a maximal element
to a \tpt-free poset $Q$.

Given $Q \in \P_{n-1}$ and $0\leq i \leq 1+\rank(Q)$, let $\add(Q,i)$ be
the poset $P$ obtained from $Q$ according to the following:
\begin{enumerate}
\item[(\addi)] 
If $ i \leq \srank(Q)$ then introduce a new maximal element 
which covers the same elements as the elements of $L_i(Q)$.

\item[(\addii)] If $i=1+\rank(Q)$, add a new element covering all
maximal elements of $Q$.

\item[(\addiii)] If $\srank(Q)<i\leq\rank(Q)$, add a new element 
covering the same elements  as the elements of $L_i(Q)$.
Let  $\M$ be the set of maximal elements 
of $Q$ of level less than $i$.  
Add all relations $x\le y$ where 
$x\leq z$ for some $z \in \M$  and $y \in L_i(Q)\cup \cdots
\cup L_{\rank(Q)}(Q)$.  In particular,  every element of $\M$ is now
covered by every minimal element of the poset induced by $L_i(Q)\cup \cdots
\cup L_{\rank(Q)}(Q)$. 

\end{enumerate}

\begin{example}\label{ex:add}
Starting from the one-element poset,
we add successively 7 points according to the rules above, where the
parameter $i$ takes the following values: $i= 1,2,3,1,0,1,2$. Note
that the sequence  $(0,1,2,3,1,0,1,2)$ is an ascent sequence. This is
of course not an accident.
For each step, the new element is circled. 
$$
\begin{minipage}{2.25em}
  \quad\scalebox{0.70}{\includegraphics{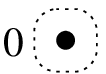}}
\end{minipage}
\;\; \xmapsto[\addii]{i_2=1}\;\; 
\begin{minipage}{2.25em}
  \quad\scalebox{0.70}{\includegraphics{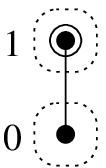}}
\end{minipage}
\;\; \xmapsto[\addii]{i_3=2}\;\; 
\begin{minipage}{2.25em}
  \quad\scalebox{0.70}{\includegraphics{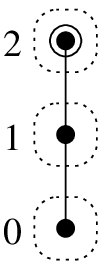}} 
\end{minipage} 
\;\; \xmapsto[\addii]{i_4=3}\;\; 
\begin{minipage}{2.25em}
  \quad\scalebox{0.70}{\includegraphics{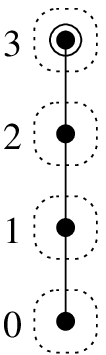}} 
\end{minipage} 
\;\; \xmapsto[\addi ]{i_5=1}\;\, 
\begin{minipage}{3em}
  \quad\scalebox{0.70}{\includegraphics{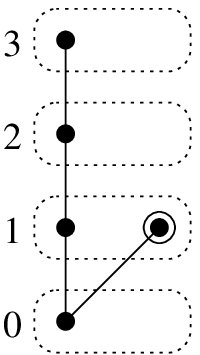}} 
\end{minipage} 
$$
$$
\;\; \xmapsto[\addi ]{i_6=0} \;\; 
\begin{minipage}{6.8em} 
  \scalebox{0.7}{\includegraphics{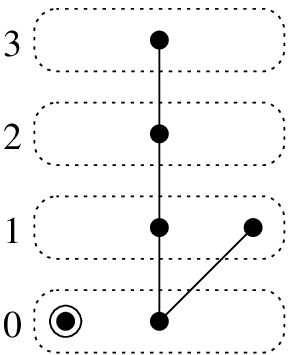}} 
\end{minipage} 
\;\; \xmapsto[\addiii]{i_7=1} \;\; 
\begin{minipage}{6.8em} 
  \scalebox{0.7}{\includegraphics{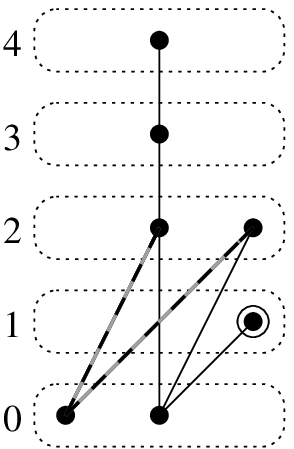}} 
\end{minipage} 
\;\; \xmapsto[\addiii]{i_8=2} \;\; 
\begin{minipage}{6.8em}
  \quad \scalebox{0.7}{\includegraphics{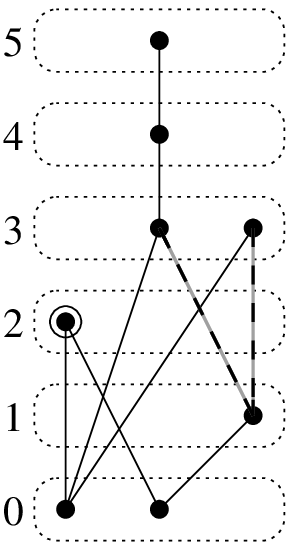}} 
\end{minipage}\smallskip
$$
In the final two steps, where the operation \addiii\ is used, 
 we have inserted dashed lines
indicating the covering of  the elements of $\M$.
Observe
 that in a last step, the addition of these new coverings makes
two edges  of the next-to-last diagram transitive: they do not appear
any more in the final diagram.
\end{example}

Let us now check that the addition operation gives a \tpt-free poset,
and establish some elementary properties of this operation. 

\begin{lemma}\label{myadd}
  If $n\geq 2$, $Q\in\P_{n-1}$, $0\leq i\leq1+\rank(Q)$ and
  $P=\add(Q,i)$, then $P \in\P_{n}$. Also,
  $$
  \srank(P) =i \quad \text{ and } \quad 
  \rank(P) =
  \begin{cases}
    \rank(Q)   & \text{if } i \leq \srank(Q), \\
    \rank(Q)+1 & \text{if } i > \srank(Q).
  \end{cases}
  $$
\end{lemma}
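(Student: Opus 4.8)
The strategy is to verify each of the three addition rules $\addi$, $\addii$, $\addiii$ separately, checking in each case that (a) the resulting $P$ is $\tpt$-free — equivalently, by the Lemma quoted above, that its family of strict down-sets $D(P)$ is linearly ordered by inclusion — and (b) the stated formulas for $\srank(P)$ and $\rank(P)$ hold. In fact a cleaner bookkeeping device is available: in each case I will explicitly describe $D(P)$ in terms of $D(Q)$, from which both (a) and (b) are immediate. Throughout, write $D_j=D_j(Q)$, $k=\rank(Q)$, $s=\srank(Q)$, and let $w$ denote the newly added element.

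For $\addi$ (so $i\le s$): the new element $w$ covers exactly the elements covered by the members of $L_i(Q)$, i.e. $D(w)=D_i$, and no relation among old elements is changed, so $D(P)=D(Q)$. Hence $P$ is $\tpt$-free and $\rank(P)=\rank(Q)$. Since $w$ is maximal in $P$ (nothing old can cover it, as $w$ was just added as a maximal element) and $w\in L_i(P)$, the level $i$ contains a maximal element of $P$; and every maximal element of $Q$ at level $<i$ would have been a maximal element of $Q$ of level $<s=\srank(Q)$, which is impossible, so no level $<i$ contains a maximal element of $P$. Thus $\srank(P)=i$. For $\addii$ (so $i=k+1$): $w$ covers all maximal elements of $Q$, so $D(w)$ is the union of the down-sets $D_j\cup L_j(Q)$ over all levels $j$ that contain a maximal element; the largest of these is obtained at level $k$ and equals $D_k\cup L_k(Q)$, which strictly contains every $D_j$. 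Hence $D(P)=D(Q)\cup\{D(w)\}$ with $D(w)\supsetneq D_k$, so $D(P)$ is linearly ordered, $\rank(P)=k+1$, and since the only maximal element of $P$ not already maximal in $Q$... actually $w$ is the unique maximal element of $P$ and it sits at level $k+1$, so $\srank(P)=k+1=i$.

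The case $\addiii$ (so $s<i\le k$) is the one I expect to be the main obstacle, since relations among old elements are genuinely modified. Here $w$ is added with $D(w)=D_i$, and then for every $z\in\M$ (the maximal elements of $Q$ of level $<i$) and every $y$ at level $\ge i$ we add $x\le y$ whenever $x\le z$; equivalently, each such $y$ now has its down-set enlarged by $\bigcup_{z\in\M}(D(z)\cup\{z\})$. The key computation is to check that this enlargement is harmless for the chain condition: one shows that $\bigcup_{z\in\M}(D(z)\cup\{z\})\subseteq D_i$ already — indeed each $z\in\M$ has level $<i$, so $D(z)=D_{\rank(z)}\subseteq D_{i-1}$, and adding $z$ itself, which lies in $L_{\rank(z)}(Q)\subseteq D_{i}$ (because $z$ is covered by every element of $L_i(Q)$ after the modification... more carefully, $z\in D_i$ since $D_i$ must contain all of $L_j(Q)$ for the levels $j<i$ that it sits below). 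Granting $z\in D_i$ for each $z\in\M$, the new down-sets at levels $\ge i$ are unchanged (they already contained $D_i$), so $D(P)=D(Q)\cup\{D_i\}=D(Q)$, whence $P$ is $\tpt$-free and $\rank(P)=k$. Finally $w\in L_i(P)$ is maximal, so $\srank(P)\le i$; and the old maximal elements of level $<i$ — the elements of $\M$ — are no longer maximal in $P$, since each is now below every element of $L_i(Q)$, which is nonempty as $i\le k$; so no level $<i$ contains a maximal element, giving $\srank(P)=i$. Throughout the argument I will need the small bookkeeping fact that $D_i$ contains $L_j(Q)$ for each level $j<i$, which is exactly the statement that $D_j\cup L_j(Q)\subseteq D_{j+1}\subseteq\dots\subseteq D_i$; this follows because any element of $L_i(Q)$ covers or exceeds every element of lower level, so its down-set $D_i$ swallows all of $L_j(Q)$ for $j<i$. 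This last point is the only place where some care is required; everything else is routine set-chasing.
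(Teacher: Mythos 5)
Your handling of \addi\ and \addii\ is essentially the same as the paper's and is fine (modulo some loose phrasing in \addii\ about which of the sets $D_j\cup L_j(Q)$ is ``largest''), but the \addiii\ case --- which you correctly single out as the crux --- contains a genuine error. You assert that each $z\in\M$ already lies in $D_i(Q)$, resting on the ``bookkeeping fact'' that $D_i(Q)\supseteq L_j(Q)$ for every level $j<i$. That fact is false: an element of $\M$ is by definition a \emph{maximal} element of $Q$, hence is a predecessor of nothing, hence belongs to no $D_j(Q)$ at all. More generally $L_j(Q)\subseteq D_i(Q)$ fails precisely when level $j$ contains a maximal element, and in the \addiii\ case this is guaranteed to happen for $j=\srank(Q)<i$. (In the paper's first example, $e\in L_1(P)$ is maximal and $e\notin D_2=\{f,g,h\}$.) As a result, your conclusions that the down-sets at levels $\ge i$ are unchanged, that $D(P)=D(Q)$, and that $\rank(P)=\rank(Q)$ are all wrong --- indeed the last one contradicts the very statement you are proving, which requires $\rank(P)=\rank(Q)+1$ whenever $i>\srank(Q)$.

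The correct computation is that each element formerly at level $m\ge i$ gains in its down-set exactly the set $\M$ and nothing more (the rest of $\bigcup_{z\in\M}\bigl(D(z)\cup\{z\}\bigr)$ is contained in $D_{i-1}(Q)\subseteq D_i(Q)$ and so was already present), giving
$$
D(P)=\bigl\{\,D_0(Q),\dots,D_i(Q),\;D_i(Q)\cup\M,\;D_{i+1}(Q)\cup\M,\dots,D_{\rank(Q)}(Q)\cup\M\,\bigr\}.
$$
This is still a chain, but it has one more member than $D(Q)$, because $\M$ is nonempty (level $\srank(Q)<i$ contains a maximal element) and is disjoint from every $D_j(Q)$. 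Hence $P$ is \tpt-free and $\rank(P)=\rank(Q)+1$, as the lemma claims. Your argument that $\srank(P)=i$ --- the elements of $\M$ cease to be maximal because they are now below every element of $L_i(Q)$ --- does go through once the down-sets are computed correctly.
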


\begin{proof}
   We examine separately the 3 cases described above.
  
  If $ i \leq \srank(Q)$, then \addi\ is used.  We want to show that the
  set $D(P)=\{D(x):x\in P\}$ of sets of predecessors can be linearly
  ordered. This is however trivial: By definition of \addi\ we have
  $D(P) = D(Q)$ which is linearly ordered.  The set of predecessors of
  the new element is $D_i(Q)$, so it lies at level $i$. As this element
  is maximal, and its level $i$ is not larger than $\srank(Q)$, we have
  $\srank(P)=i$.  Finally, it follows from $D(P) = D(Q)$ that $\rank(P)
  =\rank(Q)$.
  
  If $i=1+\rank(Q)$, then  \addii\ is used. 
  The set $D(P)$ is $D(Q) \cup \{Q\}$, which is still linearly ordered
  by inclusion. The highest level increases by one:
  $\rank(P)=\rank(Q)+1$. Finally, the new element is the only maximal
  element of $P$, so that $\srank(P)=\rank(P)=1+\rank(Q)=i$.
  
  If $\srank(Q)<i\leq\rank(Q)$, then \addiii\ is used. The new element
  has set of predecessors $D_i(Q)$. The elements that had level $i$
  or more in $Q$ now include the elements of $\M$ among their
  predecessors. Consequently,
  \begin{equation}\label{DP}
    D(P)= 
    \big\{\,D_0(Q),\,\dots,\, D_i(Q),\, 
    D_i(Q)\cup \M ,\, D_{i+1}(Q) \cup \M ,\,\dots ,\, 
    D_{\rank(Q)} (Q) \cup \M\,\big\},
  \end{equation}
  which is linearly ordered. From this expression for $D(P)$ we
  also see that $\rank(P)=\rank(Q)+1$, as claimed. Moreover, as all
  elements of level less than $i$ in $Q$ are now covered, the new
  element is the only maximal element of minimal level, so that
  $\srank(P)=i$. 
\end{proof}

Let us now prove the compatibility of our removal and addition operations.
\begin{lemma}\label{lemma:add-rm}
  For any {\tpt}-free poset $Q$ and integer $i$ such that $0\leq i\leq
  1+\rank(Q)$ we have $\remove(\add(Q,i)) = (Q,i)$. And if $Q$ has
  more than one element we also have $\add(\remove(Q)) = Q$.
\end{lemma}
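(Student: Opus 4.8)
The plan is to verify both identities by running through the three cases in each direction, checking that the parameter $i$ that labels a removal matches the parameter $i$ used in the corresponding addition, and then that the structures coincide as unlabeled posets. For the first identity, $\remove(\add(Q,i)) = (Q,i)$, start from $P=\add(Q,i)$. By Lemma~\ref{myadd} we already know $\srank(P)=i$ and that $\rank(P)$ equals $\rank(Q)$ or $\rank(Q)+1$ according to whether $i\le \srank(Q)$ or $i>\srank(Q)$. So when $\remove$ is applied to $P$, it operates at level $\srank(P)=i$, and we must check which of \subi, \subii, \subiii\ fires and that it undoes the corresponding \addi, \addii, \addiii. In Case \addi\ ($i\le\srank(Q)$), the new element is one of possibly several maximal elements at level $i$; if $L_i(Q)$ was nonempty with a maximal element, or simply if there is more than one element at level $i$ in $P$, rule \subi\ removes a level-$i$ maximal element and we recover $Q$. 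The subtlety is when the added element is the unique element at level $i$ in $P$: then either $i=\rank(P)$ and \subii\ removes it (recovering $Q$), or $i<\rank(P)$ and \subiii\ fires — one must check that here $\N = D_{i+1}(P)\setminus D_i(P)$ is empty or, more precisely, that stripping those relations and deleting the element returns exactly $Q$; this works because in this situation $D(P)=D(Q)$ and the level structure of $Q$ above $i$ is unchanged by \addi. The genuinely delicate case is \addiii\ versus \subiii: one uses the explicit description~\eqref{DP} of $D(P)$, observes that the unique maximal element of minimal level in $P$ sits alone at level $i<\rank(P)$, so \subiii\ fires; then $\N = D_{i+1}(P)\setminus D_i(P) = (D_i(Q)\cup\M)\setminus D_i(Q) = \M$ (using $\M\cap D_i(Q)=\emptyset$, which holds since elements of $\M$ are maximal of level $<i$ in $Q$), so removing the relations $x<y$ for $x\in\N$ precisely strips off the coverings that \addiii\ introduced, and deleting the level-$i$ element returns $Q$.

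For the second identity, $\add(\remove(Q))=Q$, write $\remove(Q)=(P,i)$ with $i=\srank(Q)$, and apply Lemma~\ref{remove} to get $\rank(P)$ in terms of $\rank(Q)$ and the relation between $i$ and $\srank(P)$. Again split into the three rules that could have produced $(P,i)$ from $Q$. If $Q$ was reduced by \subi\ (there were $\ge 2$ maximal elements at level $i=\srank(Q)$), then $\srank(P)\ge i$, in fact $\srank(P)=i$ because a maximal element remains at level $i$; so $\add(P,i)$ uses \addi, reintroducing a maximal element at level $i$, recovering $Q$. If \subii\ was used, then $i=\rank(Q)$ and $P$ has one fewer level, so $i=\rank(P)+1$, and $\add(P,i)$ uses \addii, adding back the top element covering all maximal elements of $P$ — which is exactly the element that was removed, since it was the unique element of maximal level in $Q$. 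If \subiii\ was used, then in $Q$ there was a unique element $m$ at level $i<\rank(Q)$; after removal, $\srank(P)<i$ (the elements of $\N$ became maximal at level $<i$) and $\rank(P)=\rank(Q)-1$, so $i\le \rank(P)+1$ and in fact $\srank(P)<i\le\rank(P)$, so $\add(P,i)$ uses \addiii. One then checks that the set $\M$ of maximal elements of $P$ of level $<i$ is exactly $\N$ (the elements freed by \subiii), and that the relations reinstated by \addiii\ are precisely those deleted by \subiii, so that $\add(P,i)=Q$; this is the mirror image of the delicate computation above and uses the same set-algebra identity.

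The main obstacle, and the step deserving the most care, is the \addiii/\subiii\ pair: one has to track the sets of predecessors through both operations and confirm the symmetric-difference bookkeeping — in particular that $\N$ (in the removal) and $\M$ (in the addition) refer to the same set of elements, that $\M\cap D_i(Q)=\emptyset$ and $\N\cap D_i(P)=\emptyset$, and that the transitive edges which \addiii\ renders redundant (noted in Example~\ref{ex:add}) are handled consistently, i.e. that "deleting all relations $x<y$ with $x\in\N$" in \subiii\ and "adding all relations $x\le y$ with $x\le z$ for some $z\in\M$" in \addiii\ are genuinely inverse at the level of the partial order, not merely at the level of Hasse diagrams. Once the predecessor-set descriptions from Lemmas~\ref{remove} and~\ref{myadd} are in hand, everything reduces to these elementary set identities, and the remaining cases \addi/\subi\ and \addii/\subii\ are immediate.
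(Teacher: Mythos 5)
Your proposal is correct and follows essentially the same route as the paper's: a case analysis pairing \addi/\subi, \addii/\subii\ and \addiii/\subiii, driven by Lemmas~\ref{remove} and~\ref{myadd}, with the crux being the identity $\N=\M$ read off from the predecessor-set description of $\add(Q,i)$. Two minor points that do not affect validity: the subcase you worry about in \addi\ (the new element being alone at level $i$) is vacuous, since $L_i(Q)\neq\emptyset$ whenever $i\leq\srank(Q)$; and after \subi\ one only knows $\srank$ of the reduced poset is $\geq i$ (not necessarily $=i$, e.g.\ if the removed element was the unique maximal element at level $i$), which still suffices for \addi\ to fire.
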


\begin{proof}
  Let us begin with the first statement, and denote $P=\add  (Q,i)$. 
  Recall that $\srank(P)=i$, so that the removal operation applied to
  $P$  takes out an element of level $i$ and gives $\psi(P)= (R,i)$. We
  want to prove that $R=Q$. 
  
  Assume that $i \leq \srank(Q)$ so that \addi\ is used
to construct $P$ from $Q$. 
The new  element  is introduced at level $i$ and is  not  alone at this
  level. Thus the removal operation \subi\ is applied to $P$, and simply
  removes one maximal element at level $i$---either the one that was added,
  or another, order-equivalent, one. Thus $Q$ and $R$ coincide, as
  unlabeled posets.  

  Assume that $i=1+\rank(Q)$ so that \addii\ is used. The new element
  is the only maximal element in $P$, so that the removal operation
  \subii\ is applied to $P$, and simply removes this maximal
  element. Thus again, $R=Q$.

  Assume that $\srank(Q)<i\leq\rank(Q)$ so that \addiii\ is used. The
  new element is maximal, and is the only element at level
  $i<\rank(P)=1+\rank(Q)$. Thus it will be removed using \subiii.  Let
  $\M$ be the set of maximal elements of $Q$ of level less than
  $i$. The set $\N$ that occurs in the description of \subiii\ is
  $D_{i+1}(P) \setminus D_i(P)$. According to~\eqref{DP}, this set
  coincides with $\M$. Hence the covering relations that were added to
  go from $Q$ to $P$ are now destroyed when going from $P$ to
  $R$. Thus $R=Q$.

  A similar argument (with the two transformations interchanged) gives
  the second statement of the lemma.
\end{proof}

\subsection{From \tpt-free posets to ascent sequences}
Our bijection $\Psi$ between \tpt-free posets of cardinality $n$ 
and ascent sequences of length  $n$ 
is defined recursively on $n$ as follows. For $n=1$, we associate with
the one-element poset the sequence $(0)$.
Now let $n \ge 2$,
and suppose that the removal operation, applied to $P \in \P_{n}$,
gives $\psi(P)=(Q,i)$. In other words, $P$ is obtained from $Q$ by
adding a new maximal element at level $i$, following our addition
procedure. Then the sequence
associated with 
$P$ is $\Psi(P):=(x_1, \dots,  x_{n-1}, i)$, where
$(x_1, \dots,x_{n-1})=\Psi(Q)$.

For instance, the poset of Example~\ref{ex:rem} corresponds to the sequence
$(0,1,0,1,3,1,1,2)$, while the poset  of Example~\ref{ex:add} corresponds to
the sequence  $(0,1,2,3,1,0,1,2)$.
\begin{theorem}\label{poset-asc}
  The map $\Psi$ is a one-to-one correspondence between {\tpt}-free
  posets of size $n$ and ascent sequences of length $n$.
\end{theorem}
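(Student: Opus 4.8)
The plan is to mirror the proof of Theorem~\ref{th:lambda}. First, $\Psi$ is injective: by Lemma~\ref{lemma:add-rm}, if $\psi(P)=(Q,i)$ then $P=\varphi(Q,i)$, so $P$ is recovered from the pair $(Q,i)$; since $i=x_n$ is the last entry of $\Psi(P)=(x_1,\dots,x_n)$ and, by induction on $n$, the prefix $(x_1,\dots,x_{n-1})=\Psi(Q)$ determines $Q$, the sequence $\Psi(P)$ determines $P$.

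It then remains to show $\Psi(\P_n)=\A_n$, the set of ascent sequences of length $n$. Lemmas~\ref{remove}, \ref{myadd} and~\ref{lemma:add-rm} together say that $\psi$ and $\varphi$ are mutually inverse bijections between $\P_n$ and the set of pairs $(Q,i)$ with $Q\in\P_{n-1}$ and $0\le i\le 1+\rank(Q)$. Hence, by the recursive definition of $\Psi$, a sequence $x=(x_1,\dots,x_n)$ lies in $\Psi(\P_n)$ if and only if $x'=(x_1,\dots,x_{n-1})\in\Psi(\P_{n-1})$ and $0\le x_n\le 1+\rank\big(\Psi^{-1}(x')\big)$. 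The definition of ascent sequence has exactly this shape, with $\asc(x')$ in place of $\rank\big(\Psi^{-1}(x')\big)$. So the theorem will follow once we prove, by induction on $n$, that
$$
\rank(P)=\asc\big(\Psi(P)\big)
\qquad\text{and}\qquad
\srank(P)=x_n
$$
for every $P\in\P_n$ with $\Psi(P)=(x_1,\dots,x_n)$.

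Both equalities are clear for $n=1$. For $n\ge 2$, write $\psi(P)=(Q,i)$, so $P=\varphi(Q,i)$ and $\Psi(P)=(x_1,\dots,x_{n-1},i)$ with $(x_1,\dots,x_{n-1})=\Psi(Q)$. Lemma~\ref{myadd} gives $\srank(P)=i=x_n$, which is the second equality. For the first, the induction hypothesis applied to $Q$ gives $\rank(Q)=\asc(x')$ with $x'=(x_1,\dots,x_{n-1})$, and $\srank(Q)=x_{n-1}$. If $i\le\srank(Q)=x_{n-1}$, then appending $i=x_n$ creates no new ascent, so $\asc(x)=\asc(x')$, while Lemma~\ref{myadd} gives $\rank(P)=\rank(Q)=\asc(x')=\asc(x)$. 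If $i>\srank(Q)=x_{n-1}$, then $x_{n-1}<x_n$, so $\asc(x)=\asc(x')+1$, while Lemma~\ref{myadd} gives $\rank(P)=\rank(Q)+1=\asc(x')+1=\asc(x)$. This completes the induction and the proof.

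As for the main obstacle: essentially all the substance is already packaged in Lemmas~\ref{remove}, \ref{myadd} and~\ref{lemma:add-rm}, so what remains is bookkeeping. The one point requiring care is that the range $0\le i\le 1+\rank(Q)$ of legal insertion parameters must match the range $0\le x_n\le 1+\asc(x')$ of the ascent-sequence condition; this is exactly what forces the auxiliary invariant $\rank(P)=\asc(\Psi(P))$, and proving that invariant relies in turn on recognizing that $\srank(Q)$ — the quantity that decides which branch of $\varphi$ is used — coincides with the last entry $x_{n-1}$ of $\Psi(Q)$.
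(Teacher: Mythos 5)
Your proposal is correct and follows essentially the same route as the paper: injectivity from the fact that the sequence encodes the construction (you make the appeal to Lemma~\ref{lemma:add-rm} explicit, which the paper leaves implicit), followed by the same induction establishing the two invariants $\rank(P)=\asc(\Psi(P))$ and $\srank(P)=x_n$ via Lemma~\ref{myadd}, which converts the recursive description of $\Psi(\P_n)$ into the definition of ascent sequences. No gaps.
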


\begin{proof}
  Since the sequence $\Psi(P)$ encodes the construction of the poset $P$, the map
  $\Psi$ is injective. We want to prove that the image of $\P_n$ is the 
  set $\A_n$ of ascent sequences of length $n$.   
  Our recursive description of the map
  $\Psi$ tells us  that $x=(x_1, \dots,x_n)\in \Psi(\P_n)$ if and only if
  \begin{equation}\label{description-bis}
  x'=(x_1, \dots, x_{n-1})\in \Psi(\P_{n-1}) \quad 
  \text{ and } \quad 0\le x_n \le 1+ \rank\left(\Psi^{-1}( x')\right).
  \end{equation}
  We will prove by induction on $n$ that for all $P \in \P_n$, with
  associated sequence $\Psi(P)=x=(x_1, \dots, x_n)$, one has
  \begin{equation}\label{properties-bis}
  \rank(P)=\asc(x)  \quad 
  \text{ and } \quad \srank(P)= x_n.
  \end{equation}
  Clearly, this will convert the above
  description~\eqref{description-bis} of $ \Psi(\P_n)$ into 
  the definition of ascent sequences, thus concluding the proof.
  
  So let us focus on the properties~\eqref{properties-bis}. They
  obviously hold for $n=1$. Now assume they hold for some $n-1$, with
  $n\ge 2$, and let $P \in \P_n$ be obtained by adding a new element
  at level $i$ in $Q \in \P_{n-1}$. Then $\Psi(P)= x=(x_1, \dots,
  x_{n-1},i)$ where $\Psi(Q)= x'=(x_1, \dots, x_{n-1})$.  By the
  induction hypothesis, $\rank(Q)=\asc(x')$ and
  $\srank(Q)=x_{n-1}$. Lemma~\ref{myadd} gives $\srank(P)=i$ and
  $$
  \rank(P) = 
  \begin{cases}
    \asc(x')   & \text{if } i \leq x_{n-1}, \\
    \asc(x')+1 & \text{if } i>  x_{n-1}. 
  \end{cases}
  $$
  The result follows.
\end{proof}

\section{Modified ascent sequences and their applications}\label{sec:mod}

In this section we introduce a transformation on ascent sequences and
show some applications. For instance, this transformation can be used
to give a  non-recursive description of the bijection $\Lambda$
between permutations of $\R$ and ascent sequences. It is also useful
to characterize the image by $\Lambda$ of a subclass of $\R$ studied
by Pudwell~\cite{lara}, which we
 enumerate.  We also describe how to
transform \tpt-free posets into permutations, without resorting to
ascent sequences. 

\subsection{Modified ascent sequences}
Let $x=(x_1,x_2,\dots,x_n)$ be any finite sequence of integers.  
We denote by $\Asc(x)$  the (ordered) list of positions where an ascent
occurs:
$$
\Asc(x) = \big(\,i : i\in[n-1] \text{ and } x_i<x_{i+1}\,\big);
$$ 
so $\asc(x)=|\!\Asc(x)|$. In terms of an algorithm we shall now
describe a function from integer sequences to integer sequences.
Let $x=(x_1,x_2,\dots,x_n)$ be the input sequence
and suppose that $\Asc(x)= (a_1,\ldots, a_k)$. Do\medskip\\
\noindent
\begin{minipage}{30em}
\mbox{}{\tt for} $i=a_1,\ldots , a_k$: \\
\mbox{}\qquad{\tt for} $j=1,\ldots ,i-1$: \\
\mbox{}\qquad\qquad{\tt if} $x_j \geq x_{i+1}$ {\tt then} $x_j := x_j+1$
\end{minipage}
\medskip\\
and denote the resulting sequence by $\widehat x$. Assuming that $x$
is an ascent sequence we call $\widehat x$ the \emph{modified ascent
  sequence}. As an example, consider the ascent sequence $x=
(0,1,0,1,3,1,1,2)$.  We have $\Asc(x)=(1,3,4,7)$ and the algorithm
computes the modified ascent sequence $\widehat x$ in the following
steps: \newcommand{\f}[1]{\makebox[2ex]{#1}}
$$
\begin{array}{ccccc}
x &\!\!\!\!=\!\!\!\!\!
& \f 0 \f{\bf 1} \f 0 \f 1 \f 3 \f 1 \f 1 \f 2\\
&& \f 0 \f 1 \f 0 \f{\bf 1} \f 3 \f 1 \f 1  \f 2\\
&& \f 0 \f 2 \f 0 \f 1 \f{\bf 3} \f 1 \f 1  \f 2\\
&& \f 0 \f 2 \f 0 \f 1 \f 3 \f 1 \f 1  \f {\bf 2}\\
&& \f 0 \f 3 \f 0 \f 1 \f 4 \f 1 \f 1 \f 2 &\!\!\!\!=\!\!\!& \widehat x
\end{array}
$$
In each step every element strictly to the left of and weakly
larger than the boldface letter is incremented by one. 
Observe that the positions of ascents in $x$ and $\widehat
x$ coincide, and  that the number of ascents in
$x$ (or $\widehat x$) is
$\asc(x)=\asc(\widehat x)=\max(\widehat x)$. 
The above procedure is easy to invert:\medskip\\
\noindent
\begin{minipage}{30em}
\mbox{}{\tt for} $i=a_k,\ldots, a_1$: \\
\mbox{}\qquad{\tt for} $j=1,\ldots ,i-1$: \\
\mbox{}\qquad\qquad{\tt if} $x_j > x_{i+1}$ {\tt then} $x_j := x_j-1$
\end{minipage}
\medskip\\
Thus the map $x\mapsto\widehat x$ is injective. 

We can also construct modified ascent sequences recursively as
follows: the only such sequence of length 1 is $(0)$. For $n\ge 2$,
$(y_1, \dots, y_n)$ is a modified ascent sequence if, and only if,
\begin{itemize}
\item $0\le y_n \le y_{n-1}$ and $(y_1, \dots, y_{n-1})$ is a
  modified ascent sequence, or
\item $ y_{n-1}<y_n\le 1 +\asc(y_1, \dots, y_{n-1})$, $y_j\not = y_n$
  for all $j<n$, and 
  $$(\,y_1-\epsilon_1,\, \dots,\, y_{n-1}-\epsilon_{n-1}\,)
  $$ 
  is a modified ascent sequence, where $\epsilon_j=1$ if $y_j\geq y_n$,
  and $\epsilon_j=0$ otherwise.
\end{itemize}

The modified ascent sequence $\widehat x$ is related to the level
distribution of the poset $P$ associated with $x$. First, observe that
the removal operation of Section~\ref{sec:rem} induces a canonical
labelling of the size $n$ poset $P$ by elements of $[n]$: the first
element that is removed gets label $n$, and so on.
Applying this to the poset of Example~\ref{ex:rem} we get the
following labelling:
\smallskip
$$
\includegraphics[scale=0.66]{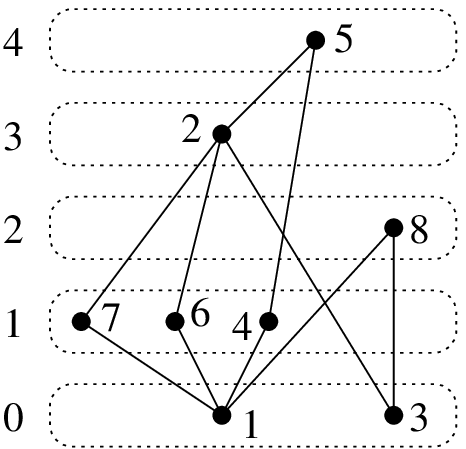}
$$ 
The following lemma is easily proved by induction, by combining the
descriptions of the map $x \mapsto \widehat x$ and of the
recursive bijection  between ascent sequences and \tpt-free posets.

\begin{lemma}\label{lem:poset-mod}
Let $P$ be a \tpt-free poset equipped with its canonical
labelling. Let  $x$ be the associated ascent sequence, and
$\widehat x =(\widehat x_1, \dots, \widehat x_n)$ the corresponding
modified ascent sequence. Then for all $i\le n$, the element $i$ of
the poset lies at level $\widehat x_i$. 
\end{lemma}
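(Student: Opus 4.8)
\emph{Proof plan.} The plan is to argue by induction on $n$, running three pieces of book-keeping in parallel: how the canonical removal step strips the label $n$ and identifies the canonical labelling of the smaller poset $Q$ obtained from $P$; how the level of each surviving element changes when $P$ is rebuilt from $Q$ via $\add(Q,i)$; and how passing from the length-$(n-1)$ ascent sequence of $Q$ to the length-$n$ ascent sequence of $P$ affects the modified sequence.

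For $n=1$ everything is trivial. For the inductive step, write $\psi(P)=(Q,i)$ with $i=\srank(P)$, so that $P=\add(Q,i)$ and $x=\Psi(P)=(x_1,\dots,x_{n-1},i)$ where $x'=(x_1,\dots,x_{n-1})=\Psi(Q)$. Since the canonical labelling records the order of removal, element $n$ of $P$ is the element just removed, a maximal element of level $\srank(P)=i=x_n$, and the restriction of the labelling of $P$ to $\{1,\dots,n-1\}$ is exactly the canonical labelling of $Q$. Hence, by the induction hypothesis, element $j$ of $Q$ lies at level $\widehat{x'}_j$ for $1\le j\le n-1$. From \eqref{properties-bis} (in the proof of Theorem~\ref{poset-asc}) we also have $\rank(Q)=\asc(x')$ and $\srank(Q)=x_{n-1}$, and since the modification algorithm never alters the last entry of a sequence, $\widehat x_n=x_n$ and $\widehat{x'}_{n-1}=x'_{n-1}=x_{n-1}$.

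Now I would split along the three cases of $\add$. If $i\le\srank(Q)$ (rule \addi), or $i=1+\rank(Q)$ (rule \addii), then by Lemma~\ref{myadd} the family $D(P)$ equals $D(Q)$, respectively $D(Q)\cup\{Q\}$, so every old element keeps its level, while the new element $n$ has level $i=x_n=\widehat x_n$. On the sequence side, in the first case $x_n\le x_{n-1}$, so $\Asc(x)=\Asc(x')$ and, since each pass of the algorithm for a position in $\Asc(x')$ only touches positions $\le n-2$ and reads an entry of index $\le n-1$, the first $n-1$ entries of $x$ evolve exactly as for $x'$; thus $\widehat x=(\widehat{x'}_1,\dots,\widehat{x'}_{n-1},x_n)$. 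In the second case $x_n=1+\rank(Q)=1+\max(\widehat{x'})$, so although $\Asc(x)=\Asc(x')\cup\{n-1\}$, the extra pass (for $i=n-1$) increments nothing, every entry of $\widehat{x'}$ being $<x_n$, and again $\widehat x=(\widehat{x'}_1,\dots,\widehat{x'}_{n-1},x_n)$. In both cases the statement for $P$ is immediate from that for $Q$. In the remaining case $\srank(Q)<i\le\rank(Q)$ (rule \addiii), I would read off from \eqref{DP} that $D_\ell(P)=D_\ell(Q)$ for $\ell\le i$ while $D_\ell(P)=D_{\ell-1}(Q)\cup\M$ for $i<\ell\le\rank(P)$, where $\M$ is the set of maximal elements of $Q$ of level $<i$; checking that under \addiii\ an element of level $\ell<i$ in $Q$ keeps its down-set whereas one of level $\ell\ge i$ has precisely $\M$ adjoined to its down-set, one gets that the level of an old element undergoes the shift $\ell\mapsto\ell+[\ell\ge i]$, with the new element $n$ landing at level $i=x_n$. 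On the sequence side $x_n=i>x_{n-1}$, so the extra pass $i=n-1$ of the algorithm increments $\widehat{x'}_j$ exactly when $\widehat{x'}_j\ge x_n$; thus $\widehat x_j=\widehat{x'}_j+[\widehat{x'}_j\ge x_n]$ for $1\le j\le n-1$ (vacuous at $j=n-1$ since $\widehat{x'}_{n-1}=x_{n-1}<x_n$) and $\widehat x_n=x_n$. Comparing the level shift $\ell\mapsto\ell+[\ell\ge i]$ with this increment rule gives that element $j$ of $P$ lies at level $\widehat x_j$ for all $j\le n$, closing the induction.

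The hard part is the analysis of rule \addiii: one must verify that adjoining the relations $x\le y$ with $x\le z$ for some $z\in\M$ and $y$ of level $\ge i$ adds to the down-set of such a $y$ nothing beyond the elements of $\M$ themselves — their own down-sets already lie inside $D_i(Q)\subseteq D(y)$, and they are maximal in $Q$, hence absent from every old down-set — and that the resulting down-sets line up exactly with the list \eqref{DP}. Once this is established, matching the increment rule of the modification algorithm with the level shift read off from \eqref{DP} is routine book-keeping.
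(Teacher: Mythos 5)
Your argument is correct and is exactly the induction the paper has in mind: Lemma~\ref{lem:poset-mod} is stated there with only the remark that it ``is easily proved by induction, by combining the descriptions of the map $x \mapsto \widehat x$ and of the recursive bijection,'' and your proposal carries out precisely that combination, with the case analysis along \addi, \addii, \addiii\ matched against the effect of the extra pass of the modification algorithm. The details you supply (the last two entries being untouched by the algorithm, the empty increment in the \addii\ case, and the level shift $\ell\mapsto\ell+[\ell\ge i]$ versus the increment rule $\widehat{x'}_j\mapsto\widehat{x'}_j+[\widehat{x'}_j\ge x_n]$ in the \addiii\ case) all check out.
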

For instance,  listing the elements 
of the poset above and their respective levels gives
$$
\begin{array}{l}
\f 1 \f 2 \f 3 \f 4 \f 5 \f 6 \f 7 \f 8\\
\f 0 \f 3 \f 0 \f 1 \f 4 \f 1 \f 1 \f 2 =\ \widehat x,
\end{array}
$$ 
where we recognize the modified ascent sequence of
$(0,1,0,1,3,1,1,2)=\Psi(P)$.

\subsection{From posets to permutations}
The canonical labelling of the poset $P$ can also be used to set up
the bijection from {\tpt}-free posets to permutations of $\R$ without
using ascent sequences. We read the elements of the poset by
increasing level, and, for a fixed level, in descending order of their
labels. This gives a permutation $f(P)$.  In our example we get
$31764825$, which is the permutation of $\R_8$ associated with the
ascent sequence $(0,1,0,1,3,1,1,2)=\Psi(P)$. Let us prove that this
works in general.

\begin{proposition}\label{prop:poset-perm}
For any \tpt-free poset $P$ equipped with its canonical labelling,
the permutation $f(P)$ described above is the permutation of $\R$
corresponding to the ascent sequence $\Psi(P)$. In other words,
$$
 \Lambda^{-1} \circ \Psi(P)= \widehat{L}_0\widehat{L}_1 \dots \widehat{L}_{\rank(P)}:=\pi,
$$
where $\widehat{L}_j$ is the word obtained by reading
  the elements of $L_j(P)$ in decreasing order. Moreover, the active
  sites of the above  permutation   are those preceding and following 
$\pi$, as well as the sites separating two consecutive factors
$ \widehat{L}_j$.
\end{proposition}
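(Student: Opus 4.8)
The plan is to prove the statement by induction on $n=|P|$, piggybacking on the recursive definitions of $\Psi$ and $\Lambda$ and on Lemma~\ref{lem:poset-mod}. The base case $n=1$ is immediate: the one-element poset has canonical labelling $1$, $f(P)=1$, $\Psi(P)=(0)$, $\Lambda^{-1}((0))=1$, and the only active sites of $1$ are the two flanking sites, which is exactly what the claim predicts (there are no internal level-separators). For the inductive step, let $P\in\P_n$ with $n\ge 2$, let $\psi(P)=(Q,i)$ so that $\Psi(P)=(x_1,\dots,x_{n-1},i)$ with $x'=(x_1,\dots,x_{n-1})=\Psi(Q)$, and note that the canonical labelling of $P$ restricts to the canonical labelling of $Q$ on the elements $1,\dots,n-1$, with $n$ being the removed element (at level $\srank(P)=i$ in $P$). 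By the induction hypothesis, $\tau:=\Lambda^{-1}(\Psi(Q))$ equals $\widehat L_0(Q)\widehat L_1(Q)\cdots\widehat L_{\rank(Q)}(Q)$, and its active sites are exactly the flanking sites together with the sites separating consecutive factors $\widehat L_j(Q)$.

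The first main step is to identify where the entry $n$ is inserted into $\tau$ to form $\pi':=\Lambda^{-1}(\Psi(P))$: by the definition of $\Lambda$, it goes into the active site of $\tau$ labelled $i$. The key bookkeeping fact, which I would establish using Lemma~\ref{myadd} (describing how levels change under $\varphi$) together with Lemma~\ref{lem:poset-mod}, is that the active site of $\tau$ labelled $i$ is precisely the site immediately preceding the block $\widehat L_i(Q)$ when $i\le\rank(Q)$ (i.e.\ just after $\widehat L_{i-1}(Q)$), and the site after the last block when $i=1+\rank(Q)$. Granting this, inserting $n$ there makes $n$ the new first (largest) letter of the block that will become level $i$ of $P$; comparing with $f(P)=\widehat L_0(P)\cdots\widehat L_{\rank(P)}(P)$, one checks case by case (the three cases $i\le\srank(Q)$, $i=1+\rank(Q)$, $\srank(Q)<i\le\rank(Q)$ of the addition operation) that the underlying sequence of letters, read by increasing level and decreasing label within a level, is unchanged for all old elements and that $n$ lands in the correct place — so $\pi'=f(P)$. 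This uses only that $\varphi$ does not move any element between levels except by the global shift in cases \addii, \addiii, which does not affect the within-level decreasing order.

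The second main step is the active-site claim for $\pi'=f(P)$. Here I would invoke the characterisation from Section~\ref{sec:asc-av}: the site between $\pi'_j$ and $\pi'_{j+1}$ is active iff $\pi'_j=1$ or $\pi'_j-1$ lies to the left of $\pi'_j$; the two flanking sites are always active. So I must show that, writing $f(P)$ as the concatenation of the blocks $\widehat L_0(P),\dots,\widehat L_{\rank(P)}(P)$, a site is active exactly when it is a block boundary. Within a block $\widehat L_j(P)$, two consecutive entries are $a>b$ with $a=b+1$ impossible only if $\ldots$ — more precisely, consecutive entries of a decreasing run $\widehat L_j(P)$ are labels of elements all at level $j$ with $D$-set $D_j(P)$; I would argue that for the entry $\pi'_j$ sitting inside a block, $\pi'_j-1$ (if it is a label at all, i.e.\ $\pi'_j>1$) is the next entry to its \emph{right} in the same block or belongs to a higher-or-equal level, hence not to the left, so the internal sites are inactive; whereas at a block boundary between level $j$ and level $j+1$, the last (smallest) label $m$ of $\widehat L_j(P)$ has $m-1$ occurring earlier (to the left), because $m-1$ is a label of an element of level $\le j$ removed later, forcing it into an earlier block or earlier in the same block — so the boundary site is active. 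The cleanest route is probably to avoid re-deriving this from scratch and instead combine the induction hypothesis (active sites of $\tau$ are $\tau$'s block boundaries) with the insertion analysis of the first step plus the generic fact, already used in the proof of Theorem~\ref{th:lambda}, that inserting the new maximum at an active site preserves activity of all old sites and creates exactly one new active site (the one just after $n$) precisely when $i>b(\tau)$; tracking how block boundaries of $\tau$ become block boundaries of $f(P)$ under each of \addi, \addii, \addiii\ then yields the claim.

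The step I expect to be the main obstacle is the careful matching in the third case, \addiii\ (when $\srank(Q)<i\le\rank(Q)$): there the addition operation not only inserts a new level-$i$ element but also adds covering relations that push all of $\M$ below everything at level $\ge i$, and it can even destroy some cover edges by transitivity (as the final step of Example~\ref{ex:add} shows). I will need to verify that these structural changes, while altering the Hasse diagram, do not change any $D$-set except by the uniform addition of $\M$ to the top levels (this is exactly \eqref{DP}), hence do not change the partition into levels for the old elements nor their within-level order, so that $f$ commutes with this operation in the required way; and correspondingly that the block-boundary/active-site correspondence is preserved. Once that case is pinned down, the other two are straightforward, and the induction closes.
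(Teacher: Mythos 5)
Your proposal is correct and follows essentially the same route as the paper: induction on $|P|$ using the recursive constructions, with the induction hypothesis identifying the active site labelled $i$ of $\tau$ as the boundary just before $\widehat L'_i$, a case split over \addi, \addii, \addiii\ (via \eqref{DP}) to see that $f(P)$ is exactly $\tau$ with $n$ inserted there, and the activity of the new block boundaries deduced from the insertion analysis of Theorem~\ref{th:lambda}. The paper is merely terser on the active-site verification (``it is then easy to check''), which your second step fills in along the same lines.
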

\begin{proof}
  We proceed by induction on the size of $P$.  The base case $n=1$ is
  easy to check. So let $n\ge 2$, and assume the proposition holds for
  $n-1$. Let $P \in \P_{n}$ be obtained by inserting a new maximal
  element at level $i$ in $Q \in \P_{n-1}$.  By the induction
  hypothesis, the permutation corresponding to $Q$ is
  $$
  \tau = \widehat{L}'_0\widehat{L}'_1 \dots \widehat{L}'_{\rank(Q)},
  $$
  where $\widehat{L}'_j$ is obtained by reading in decreasing order the
  elements of $L_j(Q)$.
  Returning to the description of the addition operation, we see that,
  if $i \le \srank(Q)$,
  $$
  \widehat{L}_j= 
  \begin{cases}
    \widehat{L}'_j             & \text{if } j\not =i,\\
    \{n\} \cup \widehat{L}'_i  & \text{if } j=i,
  \end{cases}
  $$
  while if $i > \srank(Q)$,
  $$
  \widehat{L}_j =
  \begin{cases}
    \widehat{L}'_j     & \text{if } j< i,\\
    \{n\}              & \text{if } j= i,\\
    \widehat{L}'_{j-1} & \text{if } j> i.    
  \end{cases}
  $$
  In both cases, the word obtained by reading the elements of $P$ is
  $$
  f(P) = \widehat{L}'_0\dots \widehat{L}'_{i-1} \,n\,
  \widehat{L}'_i\widehat{L}'_{i+1}\dots \widehat{L}'_{\rank(Q)},
  $$ 
  which is obtained by inserting $n$ in the active site labeled $i$ of
  $\tau$. Hence $f(P)=\Lambda^{-1} \circ \Psi(P)$. It is then easy to
  check that the active sites of $f(P)$ are indeed those separating
  the factors $\widehat{L}_j$, and those preceding and following $f(P)$.
\end{proof}

 \subsection{From ascent sequences to permutations, and vice-versa}
By combining Lemma~\ref{lem:poset-mod} and
Proposition~\ref{prop:poset-perm}, we obtain a non-recursive
description of the bijection between ascent sequences and permutations
of $\R$.  Let $x$ be an ascent sequence, and $\widehat x$ its modified
sequence.  Take the sequence $\widehat x$ and write the numbers $1$
through $n$ below it. In our running example, $x=(0,1,0,1,3,1,1,2)$,
this gives
$$
\begin{array}{l}
\f{$\widehat x$} \makebox[1.2em]{$=$}
\f 0 \f 3 \f 0 \f 1 \f 4 \f 1 \f 1 \f 2 \\
\f{}\makebox[1.2em]{}
\f 1 \f 2 \f 3 \f 4 \f 5 \f 6 \f 7 \f 8.
\end{array}
$$ 
Let $P$ be the poset associated with $x$. By
Lemma~\ref{lem:poset-mod}, the element labeled $i$ in $P$ lies at
level $\widehat x_i$. This information is not sufficient to
reconstruct the poset $P$ but it \emm is, sufficient to reconstruct
the word $f(P)$ obtained by reading the elements of $P$ by increasing level:
Sort the pairs $\binom{\widehat x_i}{i}$ in ascending order with
respect to the top entry and break ties by sorting in descending order
with respect to the bottom entry. In the above example, this gives
$$
\begin{array}{l}
\f{}\makebox[1.2em]{}\f 0 \f 0 \f 1 \f 1 \f 1 \f 2 \f 3 \f 4 \\
\f{}\makebox[1.2em]{}\f 3 \f 1 \f 7 \f 6 \f 4 \f 8 \f 2 \f 5.   
\end{array}
$$ 
By Proposition~\ref{prop:poset-perm}, the bottom row, here $31764825$,
is the permutation $\Lambda^{-1}(x)$. We have thus established the
following direct description of $\Lambda^{-1}$.

\begin{corollary}\label{coro:ascent-perm}
Let  $x$ be an ascent sequence. Sorting the pairs
$\binom{\widehat x_i}{i}$ in the order  described above gives the
permutation $\pi=\Lambda^{-1}(x)$.
Moreover, the number of entries of $\pi$ between the active sites $i$
and $i+1$ is the number of entries of $\widehat x$ equal to $i$, for
all $i\ge 0$.
\end{corollary}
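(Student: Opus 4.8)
The plan is to combine the two structural results just established: Lemma~\ref{lem:poset-mod}, which tells us that in the canonically labelled poset $P=\Psi^{-1}(x)$ the element $i$ sits at level $\widehat x_i$, and Proposition~\ref{prop:poset-perm}, which tells us that $\Lambda^{-1}(\Psi(P))$ is obtained by concatenating $\widehat L_0\widehat L_1\cdots\widehat L_{\rank(P)}$, where $\widehat L_j$ lists the elements of $L_j(P)$ in decreasing order, and that the active sites of this permutation are exactly the site before $\widehat L_0$, the sites between consecutive factors $\widehat L_j$ and $\widehat L_{j+1}$, and the site after $\widehat L_{\rank(P)}$. Putting these together: the multiset of labels appearing in $\widehat L_j$ is precisely $\{\,i : \widehat x_i = j\,\}$, and within that factor they appear in decreasing order. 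Hence reading off $f(P)$ amounts to listing all pairs $\binom{\widehat x_i}{i}$ sorted primarily by the top entry $\widehat x_i$ increasing, and, within a fixed value of the top entry, by the bottom entry $i$ decreasing. This is exactly the sorting procedure described before the corollary, so the bottom row of the sorted array is $f(P)=\Lambda^{-1}(x)$, which is the first assertion.

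For the second assertion, recall from Proposition~\ref{prop:poset-perm} that the active sites of $\pi$ are the site preceding $\widehat L_0$, the sites separating consecutive factors $\widehat L_j\widehat L_{j+1}$, and the site following $\widehat L_{\rank(P)}$; by the labelling convention of $\Lambda$ the active site preceding $\widehat L_i$ is labelled $i$. Therefore the entries of $\pi$ lying strictly between active site $i$ and active site $i+1$ are precisely the entries of the factor $\widehat L_i$, i.e.\ the elements of $L_i(P)$. By Lemma~\ref{lem:poset-mod} the number of such elements is $|\{\,j : \widehat x_j = i\,\}|$, the number of entries of $\widehat x$ equal to $i$. This holds for every $i$ with $0\le i\le \rank(P)=\asc(x)$, and for $i>\asc(x)$ both counts are zero, so the statement holds for all $i\ge 0$.

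I do not expect a genuine obstacle here: the work has all been done in Lemma~\ref{lem:poset-mod} and Proposition~\ref{prop:poset-perm}, and the corollary is essentially a repackaging. The one point that needs a little care is verifying that the sorting rule ``top entry ascending, ties broken by bottom entry descending'' produces exactly the concatenation $\widehat L_0\widehat L_1\cdots\widehat L_{\rank(P)}$ with each block internally decreasing — but this is immediate once one notes that the top entries are the levels and the blocks of equal top entry are exactly the level sets. A secondary bookkeeping point is matching the index $i$ of an active site to the index $i$ used in counting entries of $\widehat x$ equal to $i$; this is handled by the labelling convention recalled above, together with the observation that $L_0(P)$ is nonempty (it contains the minimal elements) so the correspondence between active-site labels and level indices is the obvious one.
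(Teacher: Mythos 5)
Your proposal is correct and follows exactly the paper's own route: the corollary is obtained by combining Lemma~\ref{lem:poset-mod} (element $i$ of the canonically labelled poset lies at level $\widehat x_i$) with Proposition~\ref{prop:poset-perm} (the permutation is the concatenation of the level blocks read in decreasing order, with active sites separating the blocks). The bookkeeping points you flag are handled the same way in the paper, so there is nothing to add.
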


The second statement gives a non-recursive way of deriving
$x=\Lambda(\pi)$ (or, rather, $\widehat x$) from $\pi$.  Take a
permutation $\pi\in \R_n$, and indicate its actives sites. For
instance, $\pi = _0\!31_1764_28_32_45_5$. Write the letter $i$ below
all entries $\pi_j$ that lie between the active site labeled $i$ and
the active site labeled $i+1$:
$$
\begin{array}{l}
\f 3 \f 1 \f 7 \f 6 \f 4 \f 8 \f 2 \f 5\\
\f 0 \f 0 \f 1 \f 1 \f 1 \f 2 \f 3 \f 4.
\end{array}
$$
Then sort the pairs $\binom{\pi_j}i$ by increasing order of the
$\pi_j$:
$$
\begin{array}{l}
\f 1 \f 2 \f 3 \f 4 \f 5 \f 6 \f 7 \f 8\\
\f 0 \f 3 \f 0 \f 1 \f 4 \f 1 \f 1 \f 2.
\end{array}
$$
We have recovered, on the bottom row, the modified ascent sequence
$\widehat x$ corresponding to $\pi$.

\subsection{Permutations avoiding \lara\ and self modified ascent
sequences}
\label{sec:lara-mod}

A permutation $\pi$ avoids the barred pattern \lara\ if every occurrence of the
(classical) pattern 231 plays the role of 352 in an occurrence of the
(classical) pattern 31524. 
In other words, for every $i<j<k$ such that
$\pi_k<\pi_i<\pi_j$, there exists $\ell \in (i,j)$ and $m>k$ such that
$\pi_i \pi_\ell \pi_j\pi_k\pi_m$ is an occurrence of 31524. 
Note that every such permutation avoids the pattern $\pattern$, and
thus belongs to the set $\R$. Permutations avoiding \lara\ were
considered by  Pudwell, who gave a conjecture for their
enumeration~\cite[p.~84]{lara}. Here, we describe the ascent sequences
corresponding to  these permutations via the bijection
$\Lambda$. Then, we use this description to settle Pudwell's conjecture.

An ascent sequence $x$ is \emph{self modified} if it is fixed by the
map $x\mapsto\widehat x$ defined above. For instance,
$(0,0,1,0,2,2,0,3,1,1)$ is self modified. In view of the definition of
the map $x\mapsto \widehat x$, this means that, if $x_{i+1}>x_i$, then
$x_j<x_{i+1}$ for all $j\le i$. Recall that $\asc(x)=\max(\widehat
x)$. Combining this with the condition defining ascent sequences, we
see that $(x_1, \dots, x_n)$ is a self modified ascent sequence if and
only if $x_1=0$ and, for all $i \ge 1$, either $x_{i+1}\le x_i$ or
$x_{i+1}=1+\max\{x_j: j\le i \}$.
Consequently, a modified ascent sequence $x$ with $\max(x)=k$
reads $0A_01A_12A_2\dots\, k\, A_{k},$ where $A_i$ is a (possibly
empty) weakly decreasing factor, and each element of $A_i$ is less
than or equal to $i$.
\begin{proposition}\label{prop:lara}
  The ascent sequence $x$ is self modified if and only if the
  corresponding permutation $\pi$ avoids $3{\bar 1}52\bar{4}$. In this
  case, $\max(x)= \asc(\pi)= \rmin(\pi)-1$, where $\rmin(\pi)$ is the
  number of right-to-left minima of $\pi$, that is, the number of $i$
  such that $\pi_i<\pi_j$ for all $j>i$.
\end{proposition}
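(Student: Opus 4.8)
The plan is to transport everything to the permutation side through the explicit description of $\pi=\Lambda^{-1}(x)$ given by Corollary~\ref{coro:ascent-perm} and Proposition~\ref{prop:poset-perm}: $\pi$ is the concatenation $\widehat L_0\widehat L_1\cdots\widehat L_k$ of decreasing blocks, where $\widehat L_j$ lists $\{\,i:\widehat x_i=j\,\}$ in decreasing order and $k=\max(\widehat x)$. When $x$ is self modified we have $\widehat x=x$, which, by the discussion preceding the statement, has the shape $0A_01A_1\cdots kA_k$ with each $A_j$ weakly decreasing and bounded by $j$. A short computation then identifies the last entry of $\widehat L_j$ with $p_j:=\min\{\,i:x_i=j\,\}$, gives $p_0<p_1<\cdots<p_k$, and shows each $p_j$ is a right-to-left minimum of $\pi$, since everything to its right lies in a later block and so carries a larger label. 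The ``moreover'' part then falls out: inside a block $\pi$ is decreasing, so the only candidate ascents are the $k$ block boundaries, and each is a genuine ascent because the last label $p_j$ of $\widehat L_j$ is smaller than every label of $\widehat L_{j+1}$ (all of which are at least $p_{j+1}>p_j$); and the only right-to-left minima are $p_0,\dots,p_k$, since any entry that is not last in its block is followed by the smaller $p_j$. Hence $\asc(\pi)=k=\max(x)$ and $\rmin(\pi)=k+1$.

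For the equivalence I would track classical $231$ occurrences through the insertion procedure, using repeatedly the following \emph{key fact}, immediate from the recursive descriptions of $\Lambda$ and of $x\mapsto\widehat x$ together with Lemma~\ref{lem:poset-mod}: if $x$ is self modified and has an ascent at position $w$, then $x_{w+1}$ is a new maximum, so the label $w+1$ is the last entry of its block in $\pi$ and is therefore a right-to-left minimum. Easy direction: suppose $x$ is not self modified and let $N$ be minimal with $(x_1,\dots,x_N)$ not self modified. Then $(x_1,\dots,x_{N-1})$ is self modified and $x_{N-1}<x_N\le\max\{x_j:j<N\}=:k'$, so $1\le x_N\le k'$; in $\Lambda^{-1}(x_1,\dots,x_N)$ the label $N$ is inserted at the active site $x_N$, which is strictly interior, between the nonempty blocks $\widehat{L}_{x_N-1}$ and $\widehat{L}_{x_N}$. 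The label $N-1$, maximal in $\Lambda^{-1}(x_1,\dots,x_{N-1})$ and lying in block $\widehat{L}_{x_{N-1}}$ with $x_{N-1}<x_N$, precedes $N$, while the label $p_{x_N}\le N-2$ lies in block $\widehat{L}_{x_N}$ and follows $N$. Thus $N-1,\,N,\,p_{x_N}$ occur in this order and form a $231$ occurrence that cannot be completed to $31524$, simply because no integer lies strictly between $N-1$ and $N$; inserting the remaining larger labels $N+1,\dots,n$ preserves both facts, so $\pi=\Lambda^{-1}(x)$ contains $3\bar{1}52\bar{4}$.

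Converse: assume $x$ is self modified and let $\pi_a\pi_b\pi_c$ (with $a<b<c$ and $\pi_c<\pi_a<\pi_b$) be any $231$ occurrence in $\pi$; I must produce $\ell\in(a,b)$ and $m>c$ with $\pi_\ell<\pi_c$ and $\pi_a<\pi_m<\pi_b$. Since $\pi_a<\pi_b$ with $a<b$, the entry $\pi_a$ lies in an earlier block $\widehat L_s$ than $\pi_b$, and $\pi_a\ne p_s$ (otherwise $\pi_a$ would be a right-to-left minimum, contradicting $\pi_c<\pi_a$ with $c>a$); hence the position of $p_s$ is strictly between $a$ and $b$ and its value $p_s$ is a right-to-left minimum, so below $\pi_c$, making it a valid $\ell$. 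Next, $\pi_b\ge\pi_a+2$: if $\pi_b=\pi_a+1$ then, the labels being consecutive and $\pi_a$ preceding $\pi_b$, the key fact makes $\pi_b$ a right-to-left minimum, contradicting $\pi_c<\pi_b$ to its right; so $\pi_a+1\in(\pi_a,\pi_b)$. If the label $\pi_a+1$ lies in a block later than that of $\pi_a$, the key fact makes it a right-to-left minimum, so it follows $\pi_c$ and we may take $m$ to be its position. Otherwise $\pi_a+1$ lies in a block no later than that of $\pi_a$, hence to the left of $\pi_a$; assuming no valid $m$ existed, every label in $(\pi_a,\pi_b)$ would occur at a position at most $c$, and applying the key fact in turn to the consecutive pairs $(\pi_b,\pi_b-1),(\pi_b-1,\pi_b-2),\dots$ would force $\pi_b-1,\pi_b-2,\dots,\pi_a+1$ to occur at strictly increasing positions all exceeding $b$ --- contradicting that $\pi_a+1$ lies to the left of $a$. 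Hence a valid $m$ exists, the occurrence extends to $31524$, and $\pi$ avoids $3\bar{1}52\bar{4}$.

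The bookkeeping with the insertion rules and the canonical form $0A_01A_1\cdots kA_k$ is routine. The real obstacle is the converse, and inside it the location of the ``$4$'' of the pattern $31524$ in the case where $\pi_a+1$ sits to the left of $\pi_a$: there one must play the block decomposition of $\pi$ off against the self-modification constraint packaged in the key fact.
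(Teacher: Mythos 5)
Your proof is correct, but it takes a genuinely different route from the paper's. The paper proves both directions of the equivalence (and the statistics statement) by induction on $n$, analysing how an occurrence of \lara\ can appear or disappear when the entry $n$ is inserted into an active site of $\tau\in\R_{n-1}$. You instead work globally with the block decomposition $\pi=\widehat L_0\widehat L_1\cdots\widehat L_k$ of Proposition~\ref{prop:poset-perm} and Corollary~\ref{coro:ascent-perm}: for self modified $x$ you identify the right-to-left minima of $\pi$ explicitly as the block minima $p_j=\min\{i:x_i=j\}$ and the ascents as exactly the $k$ block boundaries, which gives the ``moreover'' part directly rather than by tracking increments through the induction; your ``key fact'' (an ascent of $x$ at $w$ forces label $w+1$ to be a right-to-left minimum) then replaces the paper's case analysis in the inductive step. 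Your forward direction (not self modified implies containment) is essentially a minimal-counterexample version of the paper's Case ``$x_{n-1}<i<1+\asc(x')$'', exhibiting the unfillable $231$ occurrence $(N-1,\,N,\,p_{x_N})$ whose $\bar 4$ cannot exist. The converse is where you genuinely diverge: instead of the paper's two-case analysis on which barred letter is missing from an occurrence through $n$, you locate the $\bar 1$ as the block minimum $p_s$ sitting between $\pi_a$ and $\pi_b$, and produce the $\bar 4$ by a descending-chain argument on the consecutive labels $\pi_b-1,\pi_b-2,\dots,\pi_a+1$, each forced rightward by the key fact until one lands beyond position $c$. I checked the chain carefully (in particular that each value in $(\pi_a,\pi_b)$ exceeds $\pi_c$, so its position cannot equal $c$, and that ``label $w$ left of label $w+1$'' is equivalent to an ascent of $x$ at $w$) and it is sound. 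What your approach buys is a non-inductive, structurally explicit proof of the avoidance direction and of the statistics; what it costs is reliance on the full strength of Proposition~\ref{prop:poset-perm} and Corollary~\ref{coro:ascent-perm}, whereas the paper's induction needs only the local description of the insertion procedure.
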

\begin{proof} 
  We proceed by induction on the size $n$ of the permutations. The
  statement is obvious for $n=1$, so let $n\ge 2$, and assume it holds
  for $n-1$. Let $\pi \in \R_n$ be obtained by inserting $n$ in the
  active site labeled $i$ of
 $\tau\in \R_{n-1}$. Let $x'=(x_1, \dots,
  x_{n-1})$ be the ascent sequence $\Lambda(\tau)$.  The ascent
  sequence $\Lambda(\pi)$ is $x=(x_1, \dots, x_{n-1},i)$.
  
  First, assume $\pi$ avoids \lara, and let us prove that $x$ is self
  modified.  Note that $\tau$ avoids \lara, because the largest entry
  in this pattern is not barred. By the induction hypothesis, the
  ascent sequence $x'=\Lambda(\tau)$ is self modified. Assume, \emm ab
  absurdo,, that $x$ is not self modified. This means that
  $x_{n-1}<i<1+\asc(x')$. That is, $n$ is inserted to the right of
  $n-1$, but not to the extreme right of $\tau$. Then the entries
  $n-1$, $n$, $\pi_n$ form an occurrence of $231$ which does not play
  the role of 352 in an occurrence of 31524 (the $\overline 4$ is
  missing). This contradicts the assumption that $\pi$ avoids
  \lara. Hence $x$ is self modified.

  Conversely, assume that $x$ is self modified (so that $x'$ itself is
  self modified), and let us prove that $\pi$ avoids \lara. By the
  induction hypothesis, $\tau$ avoids \lara.  Assume, \emm ab
  absurdo,, that $\pi$ contains an occurrence of \lara. Then this
  occurrence must contain the entry $n$, playing the role of 3 in
  231. Let $\pi_j \pi_k \pi_\ell$ be such an occurrence, with
  $n=\pi_k$. Obviously, $n$ is not inserted to the extreme right of
  $\tau$, so that $i\le x_{n-1}$. Moreover, either there is no entry
  smaller than $\pi_\ell$ between $\pi_j$ and $n$ (the entry
  $\overline 1$ is missing), or there is no entry larger than $\pi_j$
  to the right of $\pi_\ell$ (the entry $\overline 4$ is missing). In
  the first case, $\pi_{k-1}\pi_k\pi_\ell$ is another occurrence of
  \lara. Since $n$ is inserted in an active site, $\pi_{k-1}-1$ occurs
  before $\pi_{k-1}$, but then $(\pi_{k-1}-1)\pi_k\pi_\ell$ forms an
  occurrence of \lara\ in $\tau$, a contradiction. In the second case,
  $\pi_j (n-1) \pi_\ell$ forms an occurrence of \lara\ in $\tau$,
  because $n-1$ is to the right of $n$. This gives a contradiction
  again. Hence $\pi$ avoids \lara.

  Still under the assumption that $x$ is self modified, observe that
  the number of ascents, and the number of right-to-left minima,
  increase by one when going from $\tau$ to $\pi$ if
  $i=1+\asc(x')$. If $i\le x_{n-1}$, then $n$ is inserted in an ascent
  of $\tau$ (otherwise the insertion would create a forbidden
  pattern), so that the number of ascents is left unchanged. The same
  holds for the number of right-to-left minima.
\end{proof}

\begin{proposition}\label{self-mod}
  The length \gf\ of \lara-avoiding permutations is
  $$
  \sum_{k\ge 1} \frac{t^{k}}{(1-t)^{\binom{k+1}{2}}}.
  $$
  Equivalently, the number of such permutations of length $n$ is
  $$
  \sum_{k=1}^{n}\binom{\binom{k}{2}+n-1}{n-k}.
  $$ 
  Moreover, the $k$-th term of this sum counts those permutations
  that have $k$ right-to-left minima, or, equivalently, $k-1$
  ascents. This is also the number of self modified ascent sequences
  of length $n$ with largest element $k-1$.
\end{proposition}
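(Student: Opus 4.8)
The plan is to transfer everything to self modified ascent sequences and then run a direct one-variable generating-function computation. By Proposition~\ref{prop:lara}, the bijection $\Lambda$ restricts to a bijection between the \lara-avoiding permutations of length $n$ (which, as noted, all lie in $\R$) and the self modified ascent sequences of length $n$, and under this restriction $\asc(\pi)=\max(x)$ and $\rmin(\pi)=\max(x)+1$. Hence it suffices to show that the number of self modified ascent sequences of length $n$ with largest element $k-1$ equals $\binom{\binom{k}{2}+n-1}{n-k}$, with associated length \gf\ $t^{k}/(1-t)^{\binom{k+1}{2}}$; summing over $k\ge 1$ then produces both displayed formulas and identifies the $k$-th term.

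First I would invoke the structural description recorded just before the statement of the proposition: a self modified ascent sequence of length $n$ with largest element $k-1$ is a word
$$0\,A_0\,1\,A_1\,2\,A_2\cdots(k-1)\,A_{k-1},$$
where each $A_i$ is an arbitrary weakly decreasing word with letters in $\{0,1,\dots,i\}$, subject only to $|A_0|+|A_1|+\cdots+|A_{k-1}|=n-k$. The number of weakly decreasing words of length $m$ over an alphabet of $i+1$ letters is the multiset coefficient $\binom{m+i}{i}$, so the number of self modified ascent sequences of length $n$ with largest element $k-1$ is
$$\sum_{m_0+\cdots+m_{k-1}=n-k}\ \prod_{i=0}^{k-1}\binom{m_i+i}{i}.$$

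Next I would pass to generating functions. Since $\sum_{m\ge 0}\binom{m+i}{i}t^m=(1-t)^{-(i+1)}$, and the $k$ forced letters $0,1,\dots,k-1$ contribute a factor $t^{k}$, the length generating function of self modified ascent sequences with largest element $k-1$ is
$$t^{k}\prod_{i=0}^{k-1}(1-t)^{-(i+1)}=\frac{t^{k}}{(1-t)^{\binom{k+1}{2}}},$$
using $\sum_{i=0}^{k-1}(i+1)=1+2+\cdots+k=\binom{k+1}{2}$. Summing over $k\ge 1$ (a legitimate formal power series, since for fixed $n$ only the terms with $k\le n$ contribute to $[t^n]$) gives the first claimed \gf. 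Extracting the coefficient of $t^n$ and using $\binom{k+1}{2}=\binom{k}{2}+k$,
$$[t^n]\frac{t^{k}}{(1-t)^{\binom{k+1}{2}}}=\binom{n-k+\binom{k+1}{2}-1}{n-k}=\binom{\binom{k}{2}+n-1}{n-k},$$
which vanishes unless $1\le k\le n$; summing over $k$ yields the second formula. Finally, the $k$-th summand is, by construction, the number of self modified ascent sequences of length $n$ with largest element $k-1$, hence by Proposition~\ref{prop:lara} the number of \lara-avoiding permutations of length $n$ with $k-1$ ascents, equivalently with $k$ right-to-left minima.

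I do not expect a genuine obstacle here: once the structural description of self modified ascent sequences is available, the argument is a routine generating-function manipulation. The only points that need a little care are the standard count of weakly decreasing words by a multiset coefficient and the elementary identity $\binom{k+1}{2}=\binom{k}{2}+k$ used to bring the coefficient of $t^n$ into exactly the stated shape.
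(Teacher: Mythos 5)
Your proposal is correct and follows essentially the same route as the paper: both reduce to self modified ascent sequences via Proposition~\ref{prop:lara}, use the decomposition $0A_0 1A_1\cdots(k-1)A_{k-1}$, and count the weakly decreasing factors $A_i$ — the paper by splitting each $A_i$ into constant runs and counting compositions of $n-k$ into $\binom{k+1}{2}$ parts, you by the equivalent multiset-coefficient/generating-function product. The two computations are the same argument in different notation.
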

 The corresponding numbers form Sequence
A098569 in the OEIS~\cite{sloane}.
\begin{proof} 
  By Proposition~\ref{prop:lara}, permutations of length $n$ avoiding \lara\ and
  having $k-1$ ascents are in bijection with self modified ascent
  sequences of length $n$ and largest entry $k-1$. As discussed above,
  such sequences read 
  $$
  x=0A_01A_12A_2\dots (k-1) A_{k-1},
  $$ 
  where $A_i$ is a (possibly empty) weakly decreasing factor, and
  each element of $A_i$ is less than or equal to $i$. That is,
  $$
  A_i= A_i^{(i)} A_i^{(i-1)}\dots  A_i^{(0)},
  $$
  where the factor $A_i^{(j)}$, for $j\le i$, consists of letters $j$
  only. Let $\ell_i^{(j)}$ be the length of this factor. 
  Clearly, there are $1+2+\cdots +k= \binom{k+1}2$ factors $A_i^{(j)}$
  in $x$, which may be empty. The list $(\ell_0^{(0)}, \ell_1^{(1)},
  \ell_1^{(0)}, \dots, \ell_{k-1}^{(0)})$ determines $x$ completely,
  and forms a composition of $n-k$ in $\binom{k+1}2$ (possibly empty)
  parts. Thus the number of such sequences $x$ is
  $$ \binom{n-k+ \binom{k+1}2-1}{n-k} =\binom{ \binom{k}2 +n-1}{n-k}
  $$
  as claimed.
\end{proof}

\section{Statistics}\label{sec:stat}

We shall now look at statistics on ascent sequences, permutations and
posets---statistics that we can translate between using our bijections.

Let $x=(x_1,x_2,\dots,x_n)$ be any sequence of nonnegative
integers. Let $\last(x)=x_n$. Define $\zeros(x)$ as the number of
zeros in $x$.  A \emph{right-to-left maximum} of $x$ 
is a letter with no larger letter to its
right; the number of right-to-left maxima is denoted $\rmax(x)$. For
example,
$$
\rmax(0,1,0,{\bf 2},{\bf 2},0,{\bf 1}) = 3;
$$ 
the right-to-left maxima are in bold.  The statistics 
\emph{right-to-left minima} ($\rmin$), \emph{left-to-right
maxima} ($\lmax$), and \emph{left-to-right minima} ($\lmin$) are
defined similarly. For sequences $x$ and $y$ of nonnegative integers,
let $x\oplus y=xy'$, where $y'$ is obtained from $y$ by adding
$1+\max(x)$ to each of its letters, and juxtaposition denotes
concatenation.  For example, $(0,2,0,1)\oplus (0,0) = (0,2,0,1,3,3)$.
We say that a sequence $x$ has $k$
\emph{components} if it is the sum of $k$, but not $k+1$, nonempty
nonnegative sequences. 
Note that  $y \oplus  z$ is  a modified ascent sequence (as defined in
Section~\ref{sec:mod}) if and only if
$y$ and $z$ are themselves modified ascent sequences. This is the case
in the above example.

For permutations $\pi$ and $\sigma$, let $\pi\oplus\sigma=\pi\sigma'$,
where $\sigma'$ is obtained from $\sigma$ by adding $|\pi|$ to each of
its letters. We say that $\pi$ has $k$ components if it is the sum of
$k$, but not $k+1$, nonempty permutations. 
Observe that $\pi\oplus\sigma$ avoids \pattern\ if and only if both
$\pi$ and $\sigma$ avoid it. This is the case for instance for
$314265= 3142\oplus 21$, which corresponds to the above modified ascent sequence
$(0,2,0,1,3,3)=(0,2,0,1)\oplus (0,0) $.

We also recall the definitions of $s(\pi)$ and $b(\pi)$. The number
of active sites of $\pi$ is $s(\pi)$. Label these
active sites  with $0$, $1$, $2$, etc.  Then $b(\pi)$ is  the
label immediately to the left of the maximal entry of $\pi$.

The number of minimal (resp.~maximal) elements of a poset $P$ is
denoted $\min(P)$ (resp. $\max(P)$). 
The ordinal sum~\cite[p. 100]{stanley} of two posets $P$ and $Q$ is
the poset $P\oplus Q$ on the union $P\cup Q$ such that $x\leq_{P\oplus
  Q} y$ if $x\leq_P y$, or $x\leq_Q y$, or $x\in P$ and $y\in Q$. 
The definition applies to labeled or unlabeled posets.
Let us say that $P$ has $k$ \emph{components} if it is the ordinal sum of
$k$, but not $k+1$, nonempty posets.
Observe that $P\oplus Q$ is  \tpt-free  if and only if both
$P$ and $Q$ are \tpt-free. 
For instance, corresponding to the modified ascent sequence
$(0,2,0,1,3,3)=(0,2,0,1)\oplus (0,0)$, above, we have
\begin{center}
  \begin{minipage}{2.6em}
    \scalebox{0.70}{\includegraphics{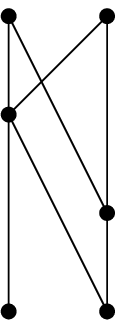}} 
  \end{minipage}
  $\quad=\quad$
  \begin{minipage}{2.6em}
    \scalebox{0.70}{\includegraphics{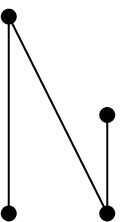}} 
  \end{minipage}
  $\quad\oplus\quad$
  \begin{minipage}{2.6em}
    \scalebox{0.70}{\includegraphics{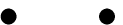}}
  \end{minipage}
\end{center}

For a \tpt-free poset $P$, a sequence $x$ and a permutation $\pi \in \R$, we
define the  following polynomials in the indeterminate $q$:
$$
\lambda(P,q) = \sum_{v\in P} q^{\rank(v)},\quad
\chi(x,q) = \sum_{i=1}^{|x|} q^{x_i},\quad
\delta(\pi,q) = \sum_{i=0}^{s(\pi)} d_iq^i,
$$ 
where $d_i$ is the number of entries of $\pi$ between the active site
labeled $i$ and the active site labeled $i+1$.
Note also that an alternative way of writing the polynomial
$\lambda(P,q)$ is $\sum_{i=0}^{\rank(P)}|L_i(P)|q^i$.
Similarly, define the polynomials 
$$
\overline \lambda(P,q) = \sum_{v\in P_{\max}} q^{\rank(v)},\quad
\overline \chi(x,q) = \sum_{x_i \rlmax} q^{x_i},\quad
\overline \delta(\pi,q) = \sum_{i=0}^{s(\pi)} \overline d_iq^i,
$$ 
where $P_{\max}$ is the set of maximal elements of $P$, the sum defining
$\overline \chi(x,q)$ is restricted to right-to-left maxima of $x$, 
and  $\overline d_i$ is the number of
right-to-left maxima of $\pi$  between the active site
labeled $i$ and the active site labeled $i+1$.

\begin{theorem}\label{thm:stats}
  Given an ascent sequence $x=(x_1,\dots,x_n)$
with modified ascent sequence $\widehat x$, 
let $P$ and $\pi$ be
  the poset and permutation corresponding to $x$ under the bijections
  described in Sections~{\rm\ref{sec:asc-av}}~and~\rm\ref{sec:poset}. Then
  $$
  \begin{matrix}
    \min(P)   &\!\!=\!\!& \zeros(x) &\!\!=\!\!& \lmin(\pi)       ;\\[.5ex]
    \srank(P) &\!\!=\!\!& \last(x)  &\!\!=\!\!& b(\pi)          ;\\[.5ex]
    \rank(P)  &\!\!=\!\!& \asc(x) &\!\!=\!\!& \asc(\pi^{-1});\\[.5ex]
    \max(P)   &\!\!=\!\!& \rmax(\widehat x) &\!\!=\!\!& \rmax(\pi);\\[.5ex]
    \comp(P)   &\!\!=\!\!& \comp(\widehat x) &\!\!=\!\!& \comp(\pi); \\[.5ex]
    \lambda(P,q) &\!\!=\!\!& \chi(\widehat x,q)&\!\!=\!\!& \delta(\pi,q);\\[.5ex]
    \overline \lambda(P,q) &\!\!=\!\!&  \overline \chi(\widehat x,q) &\!\!=\!\!&  \overline \delta(\pi,q).  
  \end{matrix}
  $$
\end{theorem}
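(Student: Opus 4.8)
The plan is to prove the seven lines of equalities by a single induction on $n$, exploiting the fact that all three families of objects are built up from the case $n-1$ by the same combinatorial "insert at level/site $i$" step, where $i=x_n$ is the last entry of the ascent sequence. Concretely, fix an ascent sequence $x=(x_1,\dots,x_n)$, write $x'=(x_1,\dots,x_{n-1})$, let $Q=\Psi^{-1}(x')$ and $\tau=\Lambda^{-1}(x')$, so that $P=\add(Q,x_n)$ and $\pi$ is obtained from $\tau$ by inserting $n$ in the active site labelled $x_n$. The base case $n=1$ is immediate: $P$ is the one-element poset, $x=(0)$, $\pi=1$, and every statistic in the table equals $1$, $0$, or the appropriate polynomial $1$. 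For the inductive step I would treat the rows in the order in which they build on each other, since several of the polynomial identities refine the scalar ones.

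First I would dispose of the three "middle" rows that are already essentially done. The identity $\rank(P)=\asc(x)$ is exactly the first half of~\eqref{properties-bis} from Theorem~\ref{poset-asc}, and $\srank(P)=\last(x)$ is its second half; likewise $b(\pi)=x_n$ is the second half of~\eqref{properties} from Theorem~\ref{th:lambda}, so $\srank(P)=\last(x)=b(\pi)$ needs no new work. For $\asc(x)=\asc(\pi^{-1})$ I would argue that inserting $n$ into the active site labelled $i$ of $\tau$ creates a new ascent in $\pi^{-1}$ precisely when $n$ lands strictly to the right of $n-1$, i.e. when $i>b(\tau)=x_{n-1}$, which by the definition of $\Asc$ is exactly when a new ascent is created in $x$; this matches Case~1 versus Case~2 of the proof of Theorem~\ref{th:lambda}. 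For $\min(P)=\zeros(x)=\lmin(\pi)$, the addition operation creates a new minimal element of $P$ iff $x_n=0$ (rules \addi{}/\addiii{} with $i=0$ add an element covering $L_0$, hence a new minimal element, and \addii{} never does for $n\ge2$), and inserting $n$ at site $0$ of $\tau$ creates a new left-to-right minimum of $\pi$ iff that site is the leftmost, i.e. iff $x_n=0$; each side therefore increments exactly when $x_n=0$, and induction closes it. The row $\comp(P)=\comp(\widehat x)=\comp(\pi)$ I would handle via the observation, recorded in Section~\ref{sec:stat}, that $\oplus$ is compatible with all three bijections and with the recursive "self-modified"/ordinal-sum structure: a new component is created exactly when the new element/site is added on top of everything, i.e. when $x_n=1+\asc(x')$ for posets (rule \addii{}) and when $n$ is inserted in the last active site for permutations; one then checks these conditions coincide and that $\widehat x$'s component count behaves identically, using that $\max(\widehat x)=\asc(x)$.

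The heart of the argument is the pair of polynomial identities $\lambda(P,q)=\chi(\widehat x,q)=\delta(\pi,q)$ and their barred versions, from which $\max(P)=\rmax(\widehat x)=\rmax(\pi)$ follows by specializing/collecting (the number of maximal elements is the number of nonempty "top" level-classes among the $L_j$, matched against right-to-left maxima). For $\lambda(P,q)=\chi(\widehat x,q)$ I would invoke Lemma~\ref{lem:poset-mod}: the canonical labelling puts element $i$ at level $\widehat x_i$, so $\lambda(P,q)=\sum_i q^{\widehat x_i}=\chi(\widehat x,q)$ with essentially no computation. For $\chi(\widehat x,q)=\delta(\pi,q)$ I would use the second statement of Corollary~\ref{coro:ascent-perm}: the number of entries of $\pi$ strictly between active sites $i$ and $i+1$ equals the number of entries of $\widehat x$ equal to $i$, which says coefficient-by-coefficient that $\delta(\pi,q)=\chi(\widehat x,q)$. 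The barred identities require the analogue of Lemma~\ref{lem:poset-mod} restricted to maximal elements and the analogue of Corollary~\ref{coro:ascent-perm} restricted to right-to-left maxima; here the main obstacle is bookkeeping which elements are maximal. I expect the key lemma to be: under the canonical labelling, element $i$ is maximal in $P$ iff $\widehat x_i$ is a right-to-left maximum of $\widehat x$, and correspondingly $\pi_j$ is a right-to-left maximum of $\pi$ iff the column of $\widehat x$ it came from in the sorting of Corollary~\ref{coro:ascent-perm} is a right-to-left maximum of $\widehat x$. I would prove this by induction tracking, at each insertion step, exactly which maximal elements of $Q$ get "swallowed" by the newly added element — precisely the set $\M$ of maximal elements of level $<i$ in rule \addiii{} — and checking that this matches the change in the set of right-to-left maxima of $\widehat x$ when the new last letter $x_n$ (equivalently, after modification, a letter equal to $\widehat x_n=x_n$ placed at the end) is appended, and the change in right-to-left maxima of $\pi$ when $n$ is inserted. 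This step is where all the subtlety of rule \addiii{} and of the modification algorithm interacts, so it is the part I would write out most carefully; everything else is routine once it is in place.
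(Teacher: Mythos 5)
Your architecture is essentially the paper's: the $\srank=\last=b$ and $\rank=\asc$ rows are read off from \eqref{properties} and \eqref{properties-bis}; $\asc(\pi^{-1})$ is tracked through the insertion step exactly as in Case 1/Case 2 of the proof of Theorem~\ref{th:lambda}; $\lambda(P,q)=\chi(\widehat x,q)=\delta(\pi,q)$ is immediate from Lemma~\ref{lem:poset-mod} and Corollary~\ref{coro:ascent-perm}; the barred identity is proved by following, through one insertion, which maximal elements are swallowed by the set $\M$ of rule \addiii\ (the paper writes this as three identical one-step recursions for $\overline\lambda$, $\overline\chi$, $\overline\delta$ rather than as your ``element $i$ is maximal iff $\widehat x_i$ is a right-to-left maximum'' lemma, but the content is the same); and the $\max=\rmax$ row is the $q=1$ specialisation. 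Your handling of $\min(P)=\zeros(x)=\lmin(\pi)$ by direct induction (each statistic increments precisely when $x_n=0$) is a harmless variant of the paper's, which instead sets $q=0$ in the polynomial identities and then identifies the block before the second active site with the initial decreasing run, shown to end in the value $1$. Both are correct.

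The one genuine gap is the $\comp$ row. The inductive step you propose --- ``a new component is created exactly when the new element is added on top of everything'' --- does not close the induction, because when the insertion is \emph{not} on top the component count is not preserved: it can drop by an arbitrary amount. For example, adding a new element at level $0$ to an $n$-element chain (rule \addi\ with $i=0$) produces a chain plus an isolated point, taking $\comp$ from $n$ to $1$; correspondingly $\pi$ goes from $12\cdots(n-1)$ to $n12\cdots(n-1)$. In general $\comp(\pi)$ after insertion equals the number of breakpoints of $\tau$ strictly to the left of the insertion position, plus one, so one must prove that the \emph{amount} of the drop agrees in all three settings, which your sketch does not address. The paper circumvents this by proving the stronger statement that the $\oplus$-decompositions themselves correspond: $\widehat x=\widehat y\oplus\widehat z$ iff $\pi=\sigma\oplus\tau$ iff $P=P_y\oplus P_z$, with matching part sizes and with the parts corresponding under the bijections; the poset direction requires checking that when $P=P_y\oplus P_z$ the canonical removal order exhausts $P_z$ before touching $P_y$, and in particular that the set $\N$ of rule \subiii\ never contains an element of $P_y$. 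Induction on the number of components then yields $\comp(P)=\comp(\widehat x)=\comp(\pi)$ together with equality of the component sizes. You need an argument of this strength (or an exact bookkeeping of how many components merge at each insertion) to complete that row.
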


\begin{example}
  Let $P$ be the poset from Example~\ref{ex:rem} and let $x$ and
  $\pi$ be the corresponding ascent sequence and permutation:
$$
  P=
  \begin{minipage}{7.5em}
    \includegraphics[scale=0.55]{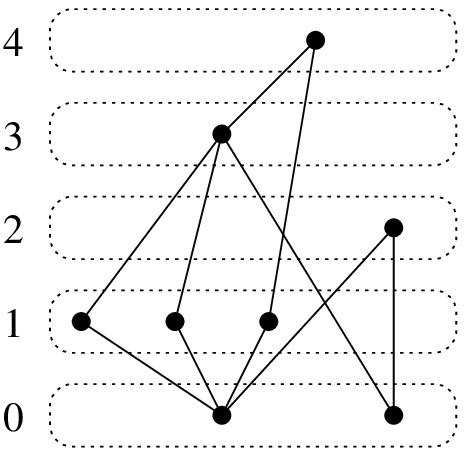}
  \end{minipage};
  \quad\;\; 
  \begin{aligned}
    x&=(0,1,0,1,3,1,1,2);\\
    \widehat x &= (0,3,0,1,4,1,1,2);
  \end{aligned}
  \quad  
  \begin{aligned}
    &\pi     = _0\!31_1764_28_32_45_5,\\
    &\pi^{-1}= 27158436.
  \end{aligned}
$$
Theorem~\ref{thm:stats} holds, with
$\min(P)      = 2$, $\srank(P)=2 $,
$\rank(P)=4 $,
$\max(P)      =2$,
$\comp(P)  =1$,
$\lambda(P,q)=q^{4} + q^{3} +  q^{2} + 3q + 2$,
and $\overline \lambda(P,q)  =q^{4} + q^{2}$.
\end{example}

\begin{proof}[Proof of Theorem~\rm\ref{thm:stats}]
  The polynomial identity $\lambda(P,q) = \chi(\widehat x,q)
  = \delta(\pi,q)$ is a consequence of Lemma~\ref{lem:poset-mod} for
  the first part, and of Corollary~\ref{coro:ascent-perm} for the
  second part.  Setting $q=0$ in $\lambda(P,q) = \chi(\widehat x,q)$
  gives $\min(P) = \zeros(x)$ (note that $\zeros(x)=\zeros(\widehat
  x)$). Setting $q=0$ in the identity $\chi(\widehat x,q)
  = \delta(\pi,q)$ shows that $\zeros(x)$ is the number of entries of
  $\pi$ between the first two active sites. Let us prove that these
  are the entries $\pi_1,\pi_2,\dots,\pi_k$, where $k$ is the largest
  integer such that $\pi_1>\pi_2>\dots>\pi_k$. Note that this means
  that $\lmin(\pi)=k$. For $1\le i <k$, the entry $\pi_i$ is followed
  by an inactive site, because $\pi_i -1$ appears to the right of
  $\pi_i$.  Assume $\pi_k>1$. Then $\pi_k-1$ appears to the right of
  $\pi_k$, but $\pi_{k+1}>\pi_k$, so that $\pi_k \pi_{k+1} (\pi_k-1)$
  is an occurrence of the forbidden pattern, a contradiction. So
  $\pi_k=1$, the site following $\pi_k$ is active, and the result is
  proved.

  The result dealing with $\last(x)$ has already been proved, when we
  established that $\Lambda$ and $\Psi$ were indeed
  bijections. See~\eqref{properties} and~\eqref{properties-bis}.
  The same holds for the connection between $\asc(x)$ and $\rank(P)$
  (see~\eqref{properties-bis} again). We also know that
  $\asc(x)=s(\pi)-2$, but we wish to relate this number to $\asc(\pi^{-1})$.
  
  The next identities will be proved by induction on $n$. These are easy
  to check when $n=1$, so we take $n\ge 2$. Denote $i=x_n$,
  $(Q,i)=\psi(P)$, and let $\tau$ be obtained by deleting the entry
  $n$ from $\pi$. Let $x'=(x_1, \dots, x_{n-1})= \Lambda(\tau)
  = \Psi(Q)$.
  
  Let us start with the connection between $\asc(x)$ and
  $\asc(\pi^{-1})$. The number of ascents increases (by one) when going from
  $\tau^{-1}$ to $\pi^{-1}$ if and only if $n$ is inserted, in $\tau$, 
to  the right of $n-1$: 
  as shown in the proof of Theorem~\ref{th:lambda}, this means that
  $\asc(x)=1+\asc(x')$ (Case 2 of the proof).

  The identity that involves $\max(P)$ is just the  case $q=1$ of the
  identity that involves the polynomial $\overline \lambda(P,q)$, which
  we now prove.
  Let us now study how the polynomials $\overline \lambda(\cdot,q)$, $\overline \chi(\widehat
  \cdot, q)$ and $\overline \delta(\cdot,q)$ evolve as the size of the
  poset/sequence/permutation increases. For posets,
  \begin{align*}
  \overline \lambda(P,q) &= 
  \begin{cases}
    \;\overline \lambda(Q,q) + q^{i} 
    & \text{if }i\le \srank(Q),\\[1ex]
    \;\displaystyle{ q^{i} + \sum_{j=i}^{\rank(Q)}\!|\overline L_j(Q)|q^{j+1}}
    & \text{if } i> \srank(Q), 
  \end{cases}\\
  \intertext{where $\overline L_j(Q)$ is the set of maximal elements
    of $Q$ at level $j$.  Similar relations hold for $\overline
    \chi(\widehat x,q)$ and $\overline \delta(\pi,q)$. Denoting
the modified ascent sequence of $x'$ by 
    $\widehat{x}'=(\widehat x'_1, \dots,\widehat x'_{n-1})$, we have
  }
  \overline \chi(\widehat x,q) &=
  \begin{cases}
    \;\overline \chi(\widehat x',q) + q^{i} 
    & \text{if }i\le x_{n-1},\\[1ex]
    \;\displaystyle{
      q^{i} + \sum_{\rlmax \widehat{x}'_{\!j}\ge i}q^{\widehat x'_{\!j}+1}}
    & \text{if } i> x_{n-1},    
  \end{cases}\\
  \overline \delta(\pi,q) &= 
  \begin{cases}
    \;\overline \delta(\tau,q) + q^{i} 
    & \text{if }i\le b(\tau),\\[1ex]
    \;\displaystyle{
      q^{i} + \sum_{j\ge i}\overline d_j\,q^{j+1}}
    & \text{if } i> b(\tau),    
  \end{cases}
  \end{align*}
  and the statement $\overline \lambda(P,q)=\overline \chi(\widehat
  x,q)=\overline \delta(\pi,q)$ follows by induction.

  We shall finally prove that $\comp(P)=\comp(\widehat
  x)=\comp(\pi)$. First, observe that it suffices to prove that
  \begin{multline*}  
    \widehat x= \widehat y\oplus \widehat z \text{ with }
    |y|=\ell  \text{ and } |z|=m \\
    \,\Leftrightarrow\,
    \pi=\sigma \oplus \tau \text{ with }
    |\sigma|=\ell  \text{ and } |\tau|=m \\
    \,\Leftrightarrow\, 
    P=P_y\oplus P_z \text{ with }
    |P_y|=\ell  \text{ and } |P_z|=m,
  \end{multline*}
  and that 
  $\sigma$ and $P_y$ (resp.~$\tau$ and $P_z$)
  are respectively the permutation  and the poset associated with the ascent
  sequence $y$ (resp.~$z$). It then follows by induction on the number
  of components, not only that $\widehat x$, $\pi$ and $P$ have the
  same number of components, but also that the sizes of the components
  are the same.
  
  From Corollary~\ref{coro:ascent-perm}, it is easily seen that
  $\pi=\sigma \oplus \tau$ if and only if $\widehat x= \widehat
  y\oplus \widehat z$, with $\Lambda(\sigma)= y$ and $\Lambda(\tau)=
  z$. Assume this holds. Let us write $\ell=|y|$ and $m=|z|$. Let $P_y$
  and $P_z$ be the posets corresponding to $y$ and $z$,
  respectively. Let us prove that the \emm canonically labeled,
  versions of $P, P_y$ and $P_z$ satisfy $P=P_y\oplus P_z$. Clearly,
  the $\ell$ first steps of the recursive construction of $P$
  (starting from the ascent sequence $x$) give the (labeled) poset
  $P_y$, which satisfies $\rank(P_y)=\max(\widehat y)$ by
  Lemma~\ref{lem:poset-mod}. Then comes the letter $x_{\ell+1}$. As 
  $\widehat x_{\ell+1}=1+\max\{\widehat x_j : j\le\ell\}$, the element
  $\ell+1$ ends up, in the final poset $P$, at a higher level than the
  elements $1, 2, \dots, \ell$.  This implies that the element
  $\ell+1$ is added using the operation \addii, and hence covers all
  maximal elements of $P_y$. Consequently, the set of predecessors of
  $\ell+1$ is $P_y$, and the poset obtained at this stage is
  $P_y\oplus \{\ell+1\}$. One then proceeds by induction of the size
  of $z$. We do not give the details.  One checks inductively that the
  relative order of the elements labeled $\ell+1$ to $n=\ell+m$ in $P$
  coincides with their order in $P_z$, and that every element of $P_y$
  is smaller than every element of $P_z$.

  Conversely, assume $P=P_y\oplus P_z$, with $\ell=|P_y|$, $m=|P_z|$
  and $\ell+m=n$. We will prove that in the canonical labelling of
  $P$, the largest $m$ letters are those of $P_z$. Again, this follows
  from an induction on $m$. As usual, we write $(Q,i)=\remove(P)$. If
  $m=1$, then $n$ is the unique maximal element of $P$, and
  $Q=P_y$. Otherwise, the element $n$ is in $P_z$ (as $P_y$ contains
  no maximal element), and one has to check that $Q=P_y\oplus P'_z$
  where $P'_z$ is obtained by applying the removal procedure to
  $P_z$. We do not give all the details. The key point is that, when
  \subiii\ is used, the set $\N=D_{i+1}\setminus D_i$ of elements that
  become maximal in $Q$ does not contain any element of $P_y$. Indeed,
  every element of $P_y$ is smaller than every element of $P_z$, so
  that it belongs to $D_i$. Once it is proved that the $m$ largest
  elements of $P$ are those of $P_z$, one applies
  Proposition~\ref{prop:poset-perm} to conclude that the corresponding
  permutation $\pi$ reads $\sigma\oplus \tau$, where $\sigma$
  (resp. $\tau$) corresponds to $P_y$ (resp.~$P_z$).
\end{proof}

\section{The number of \tpt-free posets}\label{sec:gf}
%
The aim of this section is to obtain a closed form expression for
the \gf\ $P(t)$ of unlabeled \tpt-free posets:
\begin{align*}
  P(t) 
  &= \sum_{n\ge 0} p_n \, t^n\\
  &= 1+ t+ 2t^2+ 5t^3+ 15t^4+ 53t^5+ 217t^6+ 1014t^7+ 5335t^8+
  O(t^{9}),
\end{align*}
where $p_n$ is the number of \tpt-free posets of cardinality $n$.
The sequence $(p_n)_{n\ge 0}$ is Sequence A022493 in the OEIS~\cite{sloane}.
\begin{theorem}\label{thm:u=1}
  The  \gf\ of unlabeled \tpt-free posets is
  $$
  P(t)
   =\sum_{n\ge 0} \ \prod_{i=1}^n \left( 1-(1-t)^i\right).
  $$
\end{theorem}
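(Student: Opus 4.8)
The plan is to reduce the enumeration to ascent sequences and then extract their length generating function by a kernel-method argument. By Theorem~\ref{poset-asc} the bijection $\Psi$ gives $p_n=|\A_n|$, so it suffices to prove that $\sum_{n\ge0}|\A_n|\,t^n=\sum_{n\ge0}\prod_{i=1}^n\bigl(1-(1-t)^i\bigr)$. To exploit the recursive definition, note that appending an entry to an ascent sequence $x$ of length $n-1$ offers $2+\asc(x)$ choices, and the effect of a choice on all later steps is felt only through the new number of ascents and the new last entry. Accordingly I would introduce a refined generating function $P(t,u,v)=\sum_{x\in\A}t^{|x|}u^{\asc(x)}v^{x_{|x|}}$ carrying two catalytic variables (one for the number of ascents, one for the last entry), with the goal of computing $1+P(t,1,1)$.

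Next I would turn the ``append one letter'' rule into a functional equation: splitting the new letter according to whether it is at most, or exceeds, the previous last entry (in the first case $\asc$ is unchanged, in the second it increases by $1$) and summing the resulting finite geometric series in $v$ produces a linear equation $K(t,u,v)\,P(t,u,v)=R(t,u,v)$, where $R$ is an explicit expression involving $P(t,u,1)$ and $P(t,uv,1)$, and the kernel $K$ is affine in $v$ with a single root $v=V(t,u)$ that is a power series in $t$ with $V(0,u)=1$. Substituting $v=V(t,u)$ kills the left-hand side and yields a relation that expresses $P(t,u,1)$ through $P$ evaluated at the shifted argument $u\mapsto uV(t,u)$. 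The point is that $u\mapsto uV(t,u)$ is a M\"obius transformation in $u$, so its $n$-th iterate has a closed form; iterating the relation and simplifying the telescoping products that appear should collapse to $\sum_{n\ge0}\prod_{i=1}^n(1-(1-t)^i)$, the empty poset accounting for the $n=0$ term.

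The step I expect to be the real obstacle is the passage to the value $u=1$ that we actually need. That value is a (repelling) fixed point of the M\"obius map and the kernel degenerates there, so one cannot simply set $u=1$ in the kernel-method relation---that gives a tautology---and the naive iteration does not converge as an identity of formal power series in $t$. Making this rigorous requires either working analytically for $u$ in a punctured neighbourhood of $1$ and $t$ small, keeping careful control of the remainder term so that the limit $u\to1$ can be taken, or else choosing the refined statistic (and hence the precise $P(t,u)$) so that the corresponding functional equation iterates convergently and specialises cleanly; in either case there remains the bookkeeping of checking that the closed form produced is literally $\sum_{n\ge0}\prod_{i=1}^n(1-(1-t)^i)$ rather than some less transparent equivalent.
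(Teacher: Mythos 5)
Your setup is exactly the paper's: reduce to ascent sequences via $\Psi$, introduce the two catalytic variables (number of ascents and last entry), derive the linear functional equation with kernel $v-1-tv(1-u)$, cancel it at $v=V(u)=1/(1-t+tu)$, and iterate the resulting relation using the fact that $u\mapsto u/(1-t+tu)$ is a M\"obius map with explicit $n$-th iterate $u/(u-(u-1)(1-t)^n)$. All of that is sound and matches Lemma~\ref{lem:eq-func} and Proposition~\ref{prop:series-u}. But the proof is not complete: the iteration yields an expression for $F(t;u,1)$ that converges only as a series in $u$ with rational coefficients in $t$, carries an overall factor $(1-u)$, and formally degenerates at $u=1$ (the surviving sum $\sum_k (1-t)^k$ is not a formal power series in $t$). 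You correctly flag this as ``the real obstacle,'' but you do not resolve it; you only gesture at two possible fixes (an analytic limit $u\to 1$, or a different choice of refined statistic), neither of which you carry out and neither of which is routine.

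The missing idea is a purely formal resummation, not an analytic argument. One expands each factor
$$
\frac 1 {u-(u-1)(1-t)^k} \;=\; \sum_{n\ge 0} (u-1)^n\bigl((1-t)^k-1\bigr)^n,
$$
which is legitimate in the ring of formal power series in $t$ because $(1-t)^k-1=O(t)$, and whose $n$-th term is $O(t^n)$. Exchanging the order of summation regroups $F(t;u,1)$ as $\sum_{n\ge 0}F_n(t;u)$, and a separate lemma (proved by exhibiting the relevant sums over $k$ as the unique solution of an auxiliary functional equation) shows that each $F_n(t;u)$ is a \emph{polynomial} in $t$ and $u$, divisible by $t^n$. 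Only then can one substitute $u=1$ term by term, and the binomial sums collapse to $F_n(t;1)=\prod_{i=1}^n\bigl(1-(1-t)^i\bigr)$. Without this step (or a fully worked-out substitute), the identity claimed in the theorem is not established; your proposal stops exactly where the real work begins.
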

Of course, the series $P(t)$ also counts permutations of $\R$, or
ascent sequences, by length. To our knowledge, this result is
new.  El-Zahar~\cite{ZAHAR} and Khamis~\cite{smkhamis} used a
recursive description of \tpt-free posets, different from that of
Section~\ref{sec:poset}, to derive a pair of functional equations that
define the series $P(t)$. However, they did not solve these equations.
Haxell, McDonald and Thomasson~\cite{haxell} provided an
algorithm, based on a complicated recurrence relation, to produce the
first numbers $p_n$. 
However, the above series has already appeared in the literature:
it was proved by Zagier~\cite{zagier} to
 count certain involutions introduced by
Stoimenow~\cite{stoim}. (The connection between these involutions and
\tpt-free posets is the topic of the next section.)
Moreover, Zagier derived a number of
interesting properties of the series $P(t)$. In particular, he 
gave the following asymptotic estimate:
$$
\frac{p_n}{n!} \sim \kappa\,  \left(\frac 6 {\pi^2}\right)^{\!n}\!\sqrt n, 
\quad \text{where} \quad \kappa = \frac{12\sqrt 3}{\pi^{5/  2}} e^{\pi^2/12}.
$$ 
Note that since the growth constant $ 6/ {\pi^2}$ is transcendental it
follows that the \gf\ is not D-finite~\cite{stanley-vol2,wimp-zeil}.
Zagier also proved that the series $P(t)$ satisfies the following
remarkable formula:
$$
P(1-e^{-24x})= e^x \sum_{n\ge 0} \frac {T_n} {n!} x^n,
$$
where 
$$
\sum_{n\ge 0} \frac{T_n}{(2n+1)! }  x^{2n+1}= \frac{\sin 2x}{2\cos
  3x}.
$$
Our proof of Theorem~\ref{thm:u=1} exploits the   recursive
structure of ascent sequences. 
This structure  translates into a functional equation
for the \gf\ of these sequences, which is solved by the so-called kernel
method. This gives a closed form expression of a
bivariate \gf, which counts ascent sequences by their length and 
ascent number. However, one still needs to transform this expression
to obtain the above expression for the length \gf.

\subsection{The functional equation}

Let $F(t;u,v)\equiv F(u,v)$ be the \gf\ of ascent sequences, counted by length
(variable $t$), number of ascents (variable $u$) and 
last entry (variable $v$). This is a \fps\ in $t$ with
coefficients in $\QQ[u,v]$. The first few terms of $F(t;u,v)$ are
$$
F(t;u,v)=1+t+(1+uv)t^2+ (1+2uv+u+u^2v^2)t^3+ O(t^4).
$$
Let $G(t;u,v)=F(t;u,v)-1\equiv G(u,v)$ count non-empty ascent sequences. We write
$$
G(t;u,v)=  \sum_{a,\ell \ge 0} G_{a,\ell}(t) u^a v^\ell,
$$
so that $G_{a,\ell}(t)$ is the length \gf\ of sequences having $a$
ascents and ending with the value $\ell$.

\begin{lemma}\label{lem:eq-func}
  The \gf\ $G(t;u,v)$ satisfies
$$
\left(v-1-tv(1-u)\right) G(u,v)=t(v-1)-tG(u,1)+tuv^2G(uv,1).
$$
Equivalently, $F(t;u,v)=1+G(t;u,v)$ satisfies
$$
\left(v-1-tv(1-u)\right) F(u,v)=(v-1)(1-tuv)-tF(u,1)+tuv^2F(uv,1).
$$
\end{lemma}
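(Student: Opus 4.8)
The plan is to derive the functional equation directly from the recursive structure of ascent sequences, by classifying each non-empty ascent sequence according to its last entry and whether appending that entry created a new ascent. Concretely, write $x=(x_1,\dots,x_n)$ with $n\ge 1$. If $n=1$ then $x=(0)$, contributing $t$ to $G$ (one entry, zero ascents, last entry $0$). If $n\ge 2$, set $x'=(x_1,\dots,x_{n-1})$, which is again an ascent sequence; if $x'$ has $a'$ ascents and last entry $\ell'$, then the admissible values of $x_n=\ell$ are exactly $0\le \ell\le 1+a'$. There are two cases: either $\ell\le \ell'$, in which case no new ascent is created ($a=a'$), or $\ell'<\ell\le 1+a'$, in which case one new ascent is created ($a=a'+1$).

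The next step is to translate this case split into generating function bookkeeping. First, $G(u,v)$ records $x'$ with the weight $t^{n-1}u^{a'}v^{\ell'}$ replaced by $t^{n-1}u^{a'}$ when we want to forget the last entry, i.e. we will repeatedly use $G(u,1)$ and, similarly, $F(u,1)=1+G(u,1)$. For the ``no new ascent'' case, each ascent sequence $x'$ ending in $\ell'$ gives rise to the sequences $x'\ell$ for $\ell=0,1,\dots,\ell'$; summing $v^\ell$ over this range gives $\tfrac{v^{\ell'+1}-1}{v-1}$, but it is cleaner to note that this is the ``add a smaller-or-equal entry'' operation whose generating-function effect I would package by writing $\sum_{\ell=0}^{\ell'} v^\ell$ and then summing over $x'$. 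For the ``new ascent'' case, $x'$ ending in $\ell'$ with $a'$ ascents produces $x'\ell$ for $\ell=\ell'+1,\dots,a'+1$, contributing $t\cdot u\cdot\sum_{\ell=\ell'+1}^{a'+1}v^\ell$ and then summing over $x'$. The key trick, which produces the $G(uv,1)$ term, is that a sum of the form $\sum_{a',\ell'} G_{a',\ell'}\sum_{\ell>\ell'}^{a'+1} (\cdot) v^\ell$ can be resummed by substituting $v\mapsto uv$ in a suitable specialization: tracking $u^{a'}v^{\ell}$ with $\ell$ running up to $a'+1$ is exactly what the substitution $F(uv,1)$ (times $uv^2$, to account for the extra factor of $u$ per ascent and the shift of the summation index) encodes. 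I would carry out this resummation carefully — splitting $\sum_{\ell=\ell'+1}^{a'+1}v^\ell=\frac{v^{a'+2}-v^{\ell'+1}}{v-1}$ and recognizing the two resulting pieces as $v$-multiples of $G(uv,1)$ and $G(u,v)$ respectively.

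Assembling all contributions, I expect to obtain
\begin{equation*}
G(u,v)=t+ t\sum_{x'}u^{a'}\sum_{\ell=0}^{\ell'}v^\ell
  + tu\sum_{x'}u^{a'}\!\!\sum_{\ell=\ell'+1}^{a'+1}\!\!v^\ell,
\end{equation*}
and then, after performing the geometric sums and identifying $\sum_{x'}u^{a'}v^{\ell'}=G(u,v)$, $\sum_{x'}u^{a'}=G(u,1)$, and the shifted sum as a multiple of $G(uv,1)$, collecting the coefficient of $G(u,v)$ on the left-hand side yields precisely $\bigl(v-1-tv(1-u)\bigr)G(u,v)=t(v-1)-tG(u,1)+tuv^2G(uv,1)$. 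The passage to the $F$-equation is then a one-line substitution $G=F-1$: replace $G(u,v)$ by $F(u,v)-1$, $G(u,1)$ by $F(u,1)-1$, $G(uv,1)$ by $F(uv,1)-1$, and simplify the constant terms, using $(v-1-tv(1-u))\cdot 1 - (-t)\cdot(-1) - tuv^2\cdot 1 = (v-1)(1-tuv)-(v-1)$ — wait, more carefully, the identity $-(v-1-tv(1-u))+t-tuv^2 = -(v-1)+tv-tuv+t-tuv^2$, which should combine with $t(v-1)$ on the right to give $(v-1)(1-tuv)$; I would verify this algebra at the end.

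The main obstacle I anticipate is the resummation that produces the $G(uv,1)$ (equivalently $F(uv,1)$) term: one must correctly match the summation range $\ell'+1\le\ell\le a'+1$, the extra factor $u$ from the new ascent, and the index shift so that the bivariate generating function evaluated at $(uv,1)$ — rather than at $(u,v)$ — appears, with the right power of $v$ and $u$ out front ($tuv^2$). Getting the boundary terms of the two geometric series to land on the correct objects, and confirming that nothing is double-counted between the $\ell\le\ell'$ and $\ell>\ell'$ cases (the split at $\ell=\ell'$ is clean since $\ell=\ell'$ is a non-ascent), is the delicate bookkeeping. Once that substitution is pinned down, everything else is routine geometric-series manipulation and the final $G\leftrightarrow F$ translation.
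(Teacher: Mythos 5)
Your proposal is correct and follows essentially the same route as the paper: classify non-empty ascent sequences by the last entry of the prefix $x'$ and by whether the appended letter creates a new ascent, evaluate the two geometric sums $\sum_{\ell=0}^{\ell'}v^{\ell}$ and $\sum_{\ell=\ell'+1}^{a'+1}v^{\ell}$, and recognize the boundary term $u^{a'+1}v^{a'+2}$ as $uv^{2}(uv)^{a'}$, which is exactly how the $tuv^{2}G(uv,1)$ term arises in the paper. The concluding $G\leftrightarrow F$ substitution, which you flag for verification, does simplify to $(v-1)(1-tuv)$ as claimed.
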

\begin{proof}
  Let $x'=(x_1, \dots, x_{n-1})$ be a non-empty ascent sequence with
  $a$ ascents, ending with the value $x_{n-1}=\ell$. Then $x=(x_1,
  \dots, x_{n-1}, i)$ is an ascent sequence if and only if $i\in [0,
    a+1]$.  Moreover, the sequence $x$ has $a$ ascents if $i\le\ell$,
  and $a+1$ ascents otherwise. Given that $(0)$ is the only ascent
  sequence of length $1$, this gives:
  \begin{align*}
    G(u,v) &= t+ t\sum_{a,\ell\ge 0} G_{a,\ell}(t)\left( \sum_{i=0}^\ell u^a v^i +
    \sum_{i=\ell+1}^{a+1} u^{a+1} v^i \right)\\
    &= t+ t\sum_{a,\ell\ge 0} G_{a,\ell}(t)u^a\left( \frac {v^{\ell+1}-1}{v-1}+u\, 
    \frac{v^{a+2}-v^{\ell+1}}{v-1} \right)\\
    &= t + t\, \frac{vG(u,v)-G(u,1)}{v-1} +{tuv}\frac{vG(uv,1)-G(u,v)}{v-1} .
  \end{align*}
  The result follows.
\end{proof}

\noindent
{\bf Remark.} The variables $u$ and $v$ are needed to transform  our
recursive description of ascent sequences into a functional equation,
and are thus \emm catalytic,, in the sense of~\cite{zeil-umbral}. Setting $v=1$
 in the equation gives a tautology. Setting $u=1$ gives a relation
 between $G(1,v)$, $G(1,1)$ and $G(v,1)$ which does not suffice to
 characterize these series.

\subsection{The kernel method}
Consider the functional equation satisfied by $F(t;u,v)$ given by
Lemma~\ref{lem:eq-func}. The coefficient of $F(u,v)$, called the \emm
kernel,, vanishes when $v=V(u)$, with $V(u)= 1/(1-t+tu)$.
Recall that $F(t;u,v)$ is a series in $t$ with coefficients in
$\QQ[u,v]$. Hence $F(u,V(u))$ is a well-defined series in $t$ with
coefficients if $\QQ[u]$. Replacing $v$ by $V(u)$ in the functional
equation cancels the left-hand side, and results in:
$$
F(u,1) = \frac{(1-u)(1-t)}{(1-t+tu)^2}+ \frac{u}{(1-t+tu)^2}
F\!\left(\frac u{1-t+tu},1\right).
$$
Iterating this equation gives
\begin{align*}
  F(u,1) 
  &\,=\, \frac{(1-u)(1-t)}{(1-t+tu)^2}
  \,+\,\frac{u(1-t)^2(1-u)}{(1-t+tu)(1-2t+2tu+t^2-t^2u)^2}\\
  &\quad\,+\, \frac{u^2}{(1-t+tu)(1-2t+2tu+t^2-t^2u)^2}
  F\!\left(\frac u{1-2t+2tu+t^2-t^2u},1\right)\\
  &\,=\, \sum_{k=1}^n \frac{(1-u) u^{k-1}(1-t)^k}{(u-(u-1)(1-t)^k)
    \prod_{i=1}^k(u-(u-1)(1-t)^i) }\\
  &\quad\,+\,\frac{u^n}{(u-(u-1)(1-t)^n)\prod_{i=1}^n(u-(u-1)(1-t)^i)}
  F\!\left(\frac u {u-(u-1)(1-t)^n},1\right).
\end{align*}
Letting $n\rightarrow \infty$, we obtain a first expression of
$F(t;u,1)$, as a formal series in $u$ with rational coefficients in
$t$.
\begin{proposition}\label{prop:series-u}
  The series $F(t;u,1)$ counting ascent sequences by their length and
  ascent number, seen as a series in $u$, has rational
  coefficients in $t$, and satisfies
  $$
  F(t;u,1) = \sum_{k\ge 1}\frac{(1-u)\, u^{k-1}(1-t)^k}{(u-(u-1)(1-t)^k)
    \prod_{i=1}^k(u-(u-1)(1-t)^i) }.
  $$
\end{proposition}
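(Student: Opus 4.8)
The plan is to apply the \emph{kernel method} to the functional equation of Lemma~\ref{lem:eq-func}, which is exactly what the structure of its left-hand side invites. Write that equation as $K(u,v)\,F(u,v)=(v-1)(1-tuv)-tF(u,1)+tuv^{2}F(uv,1)$ with kernel $K(u,v)=v-1-tv(1-u)$. The unique root of $K(u,\cdot)$ is $v=V(u):=1/(1-t+tu)=\sum_{k\ge 0}t^{k}(1-u)^{k}$, a well-defined \fps\ in $t$ with coefficients in $\QQ[u]$; since $F(t;u,v)$ is a power series in $t$ with coefficients in $\QQ[u,v]$, the specialization $v=V(u)$ is a legitimate operation and turns the identity into one in $\QQ[u][[t]]$. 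Performing it annihilates the left-hand side, and after using $V-1=t(1-u)/(1-t+tu)$ and $1-tuV=(1-t)/(1-t+tu)$ and dividing by $t$ one obtains
\begin{equation*}
F(u,1)=\frac{(1-u)(1-t)}{(1-t+tu)^{2}}+\frac{u}{(1-t+tu)^{2}}\,F\!\left(\frac{u}{1-t+tu},1\right).
\end{equation*}

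The next step is to iterate this relation. Set $\rho(u):=u/(1-t+tu)$; conjugating by $u\mapsto 1/u$ turns $\rho$ into the affine map $x\mapsto t+(1-t)x$, whose $n$-fold iterate is $x\mapsto 1-(1-t)^{n}+(1-t)^{n}x$, so that $\rho^{(n)}(u)=u/\bigl(u-(u-1)(1-t)^{n}\bigr)$. Substituting the displayed relation into itself $n$ times, the coefficients accumulate as products of the multiplier $B(w):=w/(1-t+tw)^{2}$ evaluated along the orbit $u,\rho(u),\rho^{(2)}(u),\dots$, and these telescope because $B(\rho^{(j)}(u))=\rho^{(j+1)}(u)^{2}/\rho^{(j)}(u)$. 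The result is, for every $n$,
\begin{equation*}
F(u,1)=\sum_{k=1}^{n}\frac{(1-u)\,u^{k-1}(1-t)^{k}}{\bigl(u-(u-1)(1-t)^{k}\bigr)\prod_{i=1}^{k}\bigl(u-(u-1)(1-t)^{i}\bigr)}+R_{n}(u),
\end{equation*}
with
\begin{equation*}
R_{n}(u)=\frac{u^{n}}{\bigl(u-(u-1)(1-t)^{n}\bigr)\prod_{i=1}^{n}\bigl(u-(u-1)(1-t)^{i}\bigr)}\,F\!\left(\frac{u}{u-(u-1)(1-t)^{n}},1\right).
\end{equation*}
Verifying these closed forms is the routine part: the cases $n=1,2$ are already displayed in the body of the section, and the general step is identical.

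It remains to let $n\to\infty$. I would view everything inside $\QQ[[t]][[u]]$, into which $F(t;u,1)\in\QQ[u][[t]]$ embeds by re-expanding in powers of $u$, equipped with the $u$-adic topology. The point is that each factor $u-(u-1)(1-t)^{i}=(1-t)^{i}+u\bigl(1-(1-t)^{i}\bigr)$ is a unit of $\QQ[[t]][[u]]$, its $u$-constant term $(1-t)^{i}$ being invertible in $\QQ[[t]]$. Consequently the $k$-th summand above has $u$-valuation exactly $k-1$, so the series $\sum_{k\ge 1}(\cdots)$ converges, while $R_{n}(u)$ has $u$-valuation at least $n$ because of its explicit factor $u^{n}$, so $R_{n}\to 0$; passing to the limit in the partial-sum identity therefore gives $F(t;u,1)=\sum_{k\ge 1}(\cdots)$. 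Finally, for each $m$ only the summands with $k\le m+1$ contribute to the coefficient of $u^{m}$, and each of them is a rational function of $t$, so that coefficient lies in $\QQ(t)$. The main obstacle is not any of the algebra but keeping the ambient ring of series straight throughout---in particular, justifying that both the specialization $v=V(u)$ and the passage $n\to\infty$ make sense there---after which the proof is a chain of direct checks.
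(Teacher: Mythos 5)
Your proposal is correct and follows essentially the same route as the paper: cancel the kernel at $v=V(u)=1/(1-t+tu)$, iterate the resulting relation for $F(u,1)$, and let $n\to\infty$ as a series in $u$. The extra details you supply (the closed form of the iterates of $u\mapsto u/(1-t+tu)$, the telescoping of the multipliers, and the $u$-adic justification of the limit) are exactly the steps the paper leaves implicit, and they check out.
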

Alas, the above expression is only
convergent \emm as a series in, $u$. In particular, if we set $u=1$,
the result seems to be zero (because of the factor $(1-u)$). If we
ignore this factor, what remains reads
$$
\sum_{k\ge 1}(1-t)^k,
$$
which is not a convergent series in the formal variable $t$.
We will now work out another series expression of $F(t;u,1)$,
which converges as a series in $t$ with coefficients in $\QQ[u]$. In
this expression we can set $u=1$, and this will give Theorem~\ref{thm:u=1}.

\subsection{Transforming the solution}
Our first lemma tells us that certain series, which look
like the one in Proposition~\ref{prop:series-u}, are actually
polynomials in $u$ and $t$.
\begin{lemma}\label{lem:polynom}
  Let $m\ge 1$ be an integer. Let $S(t;u)$ be the following series in
  $u$, with rational coefficients in $t$:
  $$
  S(t;u)= \sum_{k\ge 1} \frac{(u-1)^m\, u^{k-1}(1-t)^{mk}}{
    \prod_{i=1}^k(u-(u-1)(1-t)^i) }.
  $$
  Then $S(t;u)$ is actually a polynomial in $u$ and $t$:
  $$
  S(t;u)= - \sum_{j=0} ^{m-1} (u-1)^j u^{m-1-j} (1-t)^j
  \prod_{i=j+1}^{m-1} \left( 1-(1-t)^i\right).
  $$
\end{lemma}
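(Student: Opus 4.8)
The plan is to prove the identity by induction on $m$, after first reducing it to a first-order recursion in $m$ for the whole series. Throughout write $q=1-t$ and $P_k=\prod_{i=1}^k\bigl(u-(u-1)q^i\bigr)$, so $P_0=1$ and $P_k=P_{k-1}\bigl(u-(u-1)q^k\bigr)$. I would work in $\QQ(t)[[u]]$, formal power series in $u$ over the field $\QQ(t)$: each factor of $P_k$ has $u$-valuation $0$ (its constant term in $u$ is $q^i\neq0$), hence $P_k$ is invertible, and the $k$-th summand of the series has $u$-valuation at least $k-1$, so the series converges $u$-adically. Denote the series in the statement by $S_m$ and the claimed polynomial by
$$
T_m=-\sum_{j=0}^{m-1}(u-1)^j u^{m-1-j}q^j\prod_{i=j+1}^{m-1}\bigl(1-q^i\bigr).
$$
The goal is then $S_m=T_m$ for all $m\ge1$.

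The key algebraic remark is that $(u-1)q^k=u-\bigl(u-(u-1)q^k\bigr)$, which together with $P_k=P_{k-1}\bigl(u-(u-1)q^k\bigr)$ splits each summand of $S_m$ as
$$
\frac{(u-1)^m u^{k-1}q^{mk}}{P_k}
=\frac{u\,(u-1)^{m-1}u^{k-1}q^{(m-1)k}}{P_k}-\frac{(u-1)^{m-1}u^{k-1}q^{(m-1)k}}{P_{k-1}}.
$$
Both pieces still have $u$-valuation growing with $k$, so I may sum over $k\ge1$ termwise. For $m=1$ the right-hand side is $u^k/P_k-u^{k-1}/P_{k-1}$, so the sum telescopes to $\lim_{N}\bigl(u^N/P_N-1\bigr)=-1$ (as $u^N/P_N$ has $u$-valuation $N$); thus $S_1=-1$. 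For $m\ge2$, the first pieces sum to $u\,S_{m-1}$ by definition of $S_{m-1}$, while the second pieces, after the shift $k\mapsto k+1$ and isolating the $k=0$ term, sum to $q^{m-1}\bigl((u-1)^{m-1}+u\,S_{m-1}\bigr)$. This gives the recursion
$$
S_m=u\bigl(1-q^{m-1}\bigr)S_{m-1}-q^{m-1}(u-1)^{m-1}\qquad(m\ge2).
$$

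It then remains to check that the $T_m$ obey the same initial value and recursion. This is a short direct computation: $T_1=-1$ since only the $j=0$ term survives; and multiplying the defining sum of $T_{m-1}$ by $u\bigl(1-q^{m-1}\bigr)$ absorbs the factor $1-q^{m-1}$ into the product $\prod_{i=j+1}^{m-2}$, turning it into $\prod_{i=j+1}^{m-1}$, so that $u\bigl(1-q^{m-1}\bigr)T_{m-1}$ equals $T_m$ minus its $j=m-1$ term, and that missing term is precisely $-q^{m-1}(u-1)^{m-1}$. By induction $S_m=T_m$; since $T_m\in\QQ[u,t]$, the series $S(t;u)$ is the asserted polynomial.

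The only genuinely delicate point is the bookkeeping around the infinite series — justifying that the splitting, termwise summation, telescoping and index shift are all legitimate in $\QQ(t)[[u]]$ — but this is routine once one records that $P_k$ is a unit and that the summands' $u$-valuations tend to infinity, so I do not expect a real obstacle; the substance of the proof is the two displayed recursions and the elementary verification that $T_m$ satisfies the second one.
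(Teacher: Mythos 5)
Your proof is correct, but it takes a genuinely different route from the paper's. The paper fixes $m$ and shows that $S(t;u)$ satisfies the functional equation
$\Phi(u)= \frac{(u-1)^m(1-t)^m}{1-t+tu} + u(1-t+tu)^{m-2}\,\Phi\!\left(\frac u {1-t+tu}\right)$
(obtained by peeling off the $k=1$ term of the sum), argues by iteration that this equation has a unique solution in the space of series in $u$ with rational coefficients in $t$, and then verifies directly that the claimed polynomial is also a solution. You instead split each summand via $(u-1)(1-t)^k = u - \left(u-(u-1)(1-t)^k\right)$ to obtain a first-order recursion in the parameter $m$, namely $S_m = u\left(1-(1-t)^{m-1}\right)S_{m-1} - (1-t)^{m-1}(u-1)^{m-1}$ with $S_1=-1$ by telescoping, and check that the polynomial side satisfies the same recursion and initial condition. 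Both arguments live in the same ring and both reduce to "two objects satisfy the same characterizing relation"; the paper's version stays closer in spirit to the kernel-method iteration used elsewhere in Section 6 and never couples different values of $m$, while yours is more elementary (no substitution $u\mapsto u/(1-t+tu)$), explains the shape of the polynomial term by term (each induction step appends the $j=m-1$ summand), and yields the base case $S_1=-1$ as a clean telescoping identity. Your convergence bookkeeping ($P_k$ a unit in $\QQ(t)[[u]]$, summands of $u$-valuation at least $k-1$, so termwise manipulation and the limit $u^N/P_N\to 0$ are legitimate) is exactly what is needed; the one cosmetic slip is a sign in the prose describing what "the second pieces" sum to, but the recursion you then state is the correct one.
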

\begin{proof}
  Consider the following equation in $\Phi(t;u)\equiv \Phi(u)$:
  $$
  \Phi(u)= \frac{(u-1)^m(1-t)^m}{1-t+tu} + u(1-t+tu)^{m-2}\,
  \Phi\!\left(\frac u {1-t+tu}\right).
  $$
  By iterating it, we see that it has a unique solution in the space of
  series in $u$ with rational coefficients in $t$, and that this
  solution is the first expression of $S(t;u)$ given above.
  Moreover, by writing the equation as follows:
  $$
  (1-t+tu) \Phi(u)= {(u-1)^m(1-t)^m} + u(1-t+tu)^{m-1}\,
  \Phi\!\left(\frac u {1-t+tu}\right),
  $$
  one checks easily that the second expression of $S(t;u)$ (a
  polynomial in $t$ and $u$) is also a solution. Since a polynomial
  in $t$ and $u$ is (also) a series in $u$ with rational coefficients
  in $t$, the identity is established.
\end{proof}

From the above lemma, we are going to derive another expression of the
series $F(t;u,1)$, in which the substitution $u=1$ raises no
difficulty.
\begin{theorem}\label{thm:u}
  Let $n\ge 0$, and consider the following polynomial in $t$ and $u$:
  $$
  F_n(t;u)= \sum_{\ell=0}^n (u-1)^{n-\ell} u^\ell \sum_{m=\ell} ^{n}
  (-1)^{n-m} {n\choose {m}} (1-t)^{m-\ell }
  \prod_{i=m-\ell+1}^{m} \left( 1-(1-t)^i\right).
  $$
  Then $F_n(t;u)$ is a multiple of $t^n$. Moreover,
  the \gf\ of ascent sequences, counted by the length and the ascent
  number, is
  $$
  F(t;u,1)= \sum_{n\ge 0} F_n(t;u).
  $$  
  When $u=1$, 
  $$
  F_n(t;1)=\prod_{i=1}^{n} \left( 1-(1-t)^i\right),
  $$
  and Theorem~\ref{thm:u=1} follows.
\end{theorem}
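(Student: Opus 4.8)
The proof has three assertions to establish: that $F_n(t;u)$ is divisible by $t^n$, that $F(t;u,1)=\sum_{n\ge 0}F_n(t;u)$, and the evaluation at $u=1$. I would treat the second assertion as the heart of the matter, derive the first as a by-product of the rewriting, and dispatch the third by a direct computation.

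\textbf{Step 1: Rewrite the expression of Proposition~\ref{prop:series-u}.} Starting from
$$
F(t;u,1) = \sum_{k\ge 1}\frac{(1-u)\, u^{k-1}(1-t)^k}{(u-(u-1)(1-t)^k)
    \prod_{i=1}^k(u-(u-1)(1-t)^i) },
$$
the idea is to expand the single ``extra'' factor $\dfrac{1-u}{u-(u-1)(1-t)^k}$ as a geometric-type series. Writing $u-(u-1)(1-t)^k = u\bigl(1-(1-t)^k\bigr)+(1-t)^k$ and factoring, one gets
$$
\frac{1-u}{u-(u-1)(1-t)^k}= -\sum_{j\ge 0} (u-1)^{j+1}(1-t)^{jk}\Big/ (\text{a power of the same denominator-type factor}),
$$
so that after substitution each term of $F(t;u,1)$ becomes a sum over $j\ge 0$ of series of exactly the shape $S(t;u)$ appearing in Lemma~\ref{lem:polynom}, with $m=j+1$. (One must be a little careful: the relevant geometric expansion is in the variable $(1-t)^k$, which is legitimate as a formal power series in $t$.) Applying Lemma~\ref{lem:polynom} term by term then replaces each of these inner series by an explicit polynomial in $t$ and $u$. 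Collecting the contributions and reindexing — grouping the polynomial pieces by a new summation index $n$, which will be something like $j+\ell$ or $j+(\text{degree bookkeeping})$ — should produce exactly $\sum_{n\ge 0}F_n(t;u)$, with the double sum over $\ell$ and $m$ in the definition of $F_n$ arising from combining the $j$-index of the geometric expansion with the $j$-index internal to Lemma~\ref{lem:polynom}'s output and a binomial identity. The binomial coefficient $\binom{n}{m}$ should emerge from a Vandermonde-type convolution in this reindexing.

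\textbf{Step 2: Divisibility by $t^n$.} Once $F_n(t;u)$ is identified as the ``degree-$n$ block'' of the reorganized sum, its divisibility by $t^n$ should follow either directly from the construction (each $F_n$ collects terms that carried a factor $(1-t)^{(\text{something})}$ raised to a power forcing $\ge n$ powers of $t$ after the alternating cancellation), or by an independent check: substitute $u$ arbitrary and observe that $F_n(t;u)$, as written, is an alternating sum $\sum_{m}(-1)^{n-m}\binom{n}{m}(\cdots)$ of a function of $m$ that is a polynomial in $(1-t)$ of controlled degree, so the $n$-th finite difference kills everything of $(1-t)$-degree $<n$; since $1-(1-t)=t$, the surviving part is a multiple of $t^n$. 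I expect the finite-difference argument to be the clean way to see this and it also reassures us that $\sum_n F_n$ converges as a formal power series in $t$.

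\textbf{Step 3: The evaluation at $u=1$.} Set $u=1$ in the definition of $F_n(t;u)$. Every term with $n-\ell>0$ carries a factor $(u-1)^{n-\ell}$ and hence vanishes, leaving only $\ell=n$:
$$
F_n(t;1)= \sum_{m=n}^{n}(-1)^{n-m}\binom{n}{m}(1-t)^{m-n}\prod_{i=m-n+1}^{m}\bigl(1-(1-t)^i\bigr)
= \prod_{i=1}^{n}\bigl(1-(1-t)^i\bigr),
$$
since only $m=n$ contributes and then $(-1)^0\binom{n}{n}(1-t)^0=1$ and the product runs over $i=1,\dots,n$. Summing over $n\ge 0$ and invoking $F(t;u,1)=\sum_n F_n(t;u)$ from Step~1 gives $P(t)=F(t;1,1)=\sum_{n\ge 0}\prod_{i=1}^n\bigl(1-(1-t)^i\bigr)$, which is Theorem~\ref{thm:u=1}.

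\textbf{Main obstacle.} The delicate part is Step~1: carrying out the double geometric/Lemma-\ref{lem:polynom} expansion and then performing the reindexing so that the terms regroup \emph{exactly} into the stated $F_n(t;u)$, with the binomial coefficient $\binom{n}{m}$ and the two nested products matching on the nose. This is a bookkeeping computation with three interacting indices ($k$, the expansion index $j$, and the internal index from Lemma~\ref{lem:polynom}), and getting the convolution identity that produces $\binom{n}{m}$ to fall out correctly — rather than some other combinatorial factor — is where all the care is needed. One should also verify convergence in $t$ at each regrouping so that the rearrangement of the formal series is justified, but that is routine once the divisibility estimate of Step~2 is in hand.
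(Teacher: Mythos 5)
Your outline follows the paper's strategy (start from Proposition~\ref{prop:series-u}, expand the troublesome factor, invoke Lemma~\ref{lem:polynom}, regroup by a new index $n$), and your Steps~2 and~3 are fine --- Step~3 is verbatim the paper's argument, and your finite-difference observation in Step~2 is a legitimate independent check of the divisibility. But Step~1, which you yourself flag as the heart of the proof, contains a genuine gap: the expansion you propose is not the one that works. You suggest writing $u-(u-1)(1-t)^k=u\bigl(1-(1-t)^k\bigr)+(1-t)^k$ and expanding geometrically ``in the variable $(1-t)^k$'', producing terms $(u-1)^{j+1}(1-t)^{jk}$. Such an expansion is not $t$-adically convergent: $(1-t)^{jk}=1+O(t)$, so the $j$-th term is not $O(t^j)$, and summing over $k\ge 1$ afterwards reproduces exactly the divergence $\sum_k(1-t)^k$ that the paper warns about after Proposition~\ref{prop:series-u}. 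The single idea that makes the proof go through is to instead write
$$
\frac{1}{u-(u-1)(1-t)^k}=\frac{1}{1-(u-1)\bigl((1-t)^k-1\bigr)}=\sum_{n\ge 0}(u-1)^n\bigl((1-t)^k-1\bigr)^n ,
$$
which is legitimate because $(1-t)^k-1=O(t)$, so the $n$-th term is $O(t^n)$. With this choice the index $n$ of the geometric series \emph{is} the index of $F_n$, divisibility by $t^n$ comes for free, and the coefficient $\binom{n}{m}$ arises from the plain binomial expansion of $\bigl((1-t)^k-1\bigr)^n$ --- no Vandermonde convolution is needed. One then applies Lemma~\ref{lem:polynom} (with its $m$ replaced by $m+1$) to the remaining $k$-sum for each fixed $m$, and the substitution $j=m-\ell$ gives the stated formula. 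As written, your proposal identifies the right ingredients but leaves the decisive computation undone and points it in a direction that would not converge.
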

\begin{proof}
  We return to the expression of $F(t;u,1)$ given in
  Proposition~\ref{prop:series-u}. The expansion
  $$
  \frac 1 {u-(u-1)(1-t^k)} = \frac 1 {1-(u-1)((1-t)^k-1)}
  =\sum_{n\ge 0} (u-1)^n ((1-t)^k-1)^n
  $$
  is valid in the space of series in $t$ with polynomial coefficients in
  $u$, as $(1-t)^k-1=O(t)$. It holds as well in the larger space of
  \fps\ in $t$ and $u$. Moreover, the $n$th term is $O(t^n)$. Hence, in
  the space of series in $t$ and $u$,
  $$
  F(t;u,1)=  \sum_{k\ge 1} \frac{(1-u)\, u^{k-1}(1-t)^k}{
    \prod_{i=1}^k(u-(u-1)(1-t)^i) }
  \sum_{n\ge 0} (u-1)^n ((1-t)^k-1)^n
  =\sum_{n\ge 0} F_n(t;u)
  $$
  where 
  \begin{align*}
    F_n(t;u)&= -\sum_{k\ge 1} \frac{(u-1)^{n+1}\, u^{k-1}(1-t)^k}{
      \prod_{i=1}^k(u-(u-1)(1-t)^i) }((1-t)^k-1)^n\\
    &=-\sum_{k\ge 1} \frac{(u-1)^{n+1}\, u^{k-1}(1-t)^k}{
      \prod_{i=1}^k(u-(u-1)(1-t)^i) }\sum_{m=0}^n {n\choose
      m}(1-t)^{km}(-1)^{n-m}\\
    &=-\sum_{m=0}^n {n\choose m}(-1)^{n-m}(u-1)^{n-m}\sum_{k\ge 1}
    \frac{ (u-1)^{m+1}\, u^{k-1}
      (1-t)^{k(m+1)}}{  \prod_{i=1}^k(u-(u-1)(1-t)^i) }.
  \end{align*}
  It remains to apply Lemma~\ref{lem:polynom}, with $m$ replaced by $m+1$:
  $$
  F_n(t;u) = \sum_{m=0}^n (-1)^{n-m}{n\choose m}(u-1)^{n-m}
  \sum_{j=0} ^{m} (u-1)^j u^{m-j} (1-t)^j
  \prod_{i=j+1}^{m} \left( 1-(1-t)^i\right).
  $$
  The expected expression of $F_n(t;u)$ follows, upon writing $j=m-\ell$.
\end{proof}

\section{Involutions with no neighbour nesting}
\label{sec:chord}

As discussed above, the series of Theorem~\ref{thm:u=1} is known to
count certain involutions on
$2n$ points, called \emm regular linearized chord diagrams,\ (RLCD) by
Stoimenow~\cite{stoim}. This result was proved by
Zagier~\cite{zagier}, following Stoimenow's paper. 
In this section, we give a new proof of Zagier's result, by
constructing a bijection between  RLCDs on 
$2n$ points  and unlabeled $({\mathbf{2+2}})$-free posets
of size $n$. 

Let $\Invs{2n}$ be the collection of involutions $\pi$ in $\sym_{2n}$
that have no fixed points and for which every descent
crosses the main diagonal in its dot diagram.
Equivalently, if  $\pi_i> \pi_{i+1}$ then $\pi_i>i\ge\pi_{i+1}$.
 An alternative description  can be given in terms of the
\emm chord diagram, of $\pi$, which is obtained by joining the points
$i$ and $\pi_i$ by a chord (Figure~\ref{fig:chords}, top left).
Indeed,  $\pi \in \Invs{2n}$ if and only if, for any $i$, the chords
attached to $i$ and $i+1$ are not \emm nested,, in the terminology used
recently for partitions and involutions (or matchings)~\cite{chen,krattenthaler}. That is, the  configurations
shown on the left of the rules of Figure~\ref{fig:swapping} are forbidden (but a
 chord linking $i$ to $i+1$ is allowed). Such involutions were called
\emm regular linearized chord diagrams, by Stoimenow. We prefer to say that they have no \emm neighbour nesting.,

Recall that a poset $P$ is \tpt-free if and only if it is an \emm
interval order,~\cite{FISH_OPER}. This means that there exists a
collection of intervals on the real line whose relative order is $P$,
under the 
 relation:
\beq\label{int-order}
[a_1,a_2] < [a_3,a_4] \;\,\Longleftrightarrow\,\; a_2<a_3.
\eeq
Let 
$\pi$ be a fixed point free involution with transpositions
$\{(\alpha_i,\beta_i)\}_{i=1}^n$ where 
$\alpha_i<\beta_i$ for all $i$.
Define $\Omega(\pi)$ to be the interval order (or equivalently,
\tpt-free poset) 
associated with the
collection of intervals $\{[\alpha_i,\beta_i]\}_{i=1}^n$. The
transformation $\Omega$ has a  symmetry property that will
be important: the poset associated with the mirror of
  $\pi$ (obtained by reflecting the chord diagram of $\pi$ across
a vertical line) is the dual of $\Omega(\pi)$.

  \begin{figure}[h]
    \centerline{\scalebox{0.8}{\includegraphics{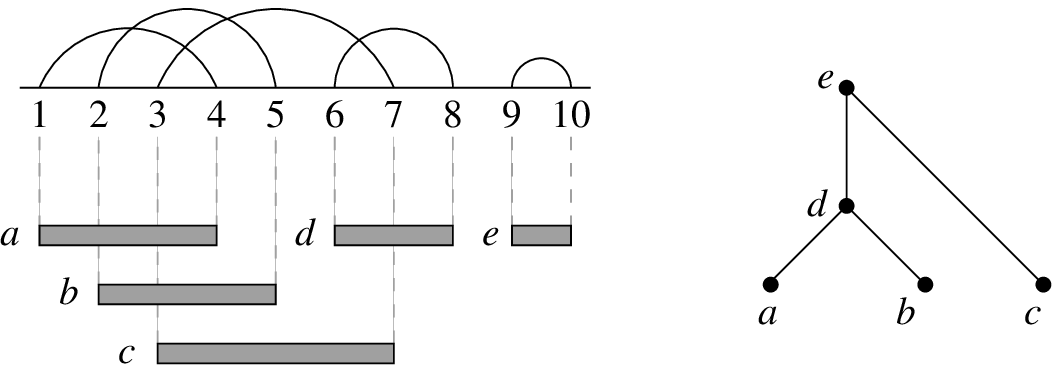}}}
    \caption{The involution  $\pi = 4 \, 5\, 7 \, 1 \, 2 \, 8 \, 3 \, 6 \, 10 \, 9
   \in \Invs{10}$, 
	the corresponding collection of intervals
      and the associated \tpt-free poset.}
    \label{fig:chords}
  \end{figure}

\begin{example}
  Consider $\pi \;=\; 4 \, 5\, 7 \, 1 \, 2 \, 8 \, 3 \, 6 \, 10 \, 9
  \; \in \; \Invs{10}$.  The transpositions of $\pi$ are shown in the
  chord diagram of Figure~\ref{fig:chords}.  Beneath the chord diagram
  is the collection of intervals that corresponds to $\pi$,
  and the $(\mathbf{2+2})$-free poset $\Omega(\pi)$ is
  shown on the right-hand side. 
We have added labels to highlight the correspondence between
intervals and poset elements.
\end{example}

\begin{theorem}\label{thm:inv-poset}
  The map $\Omega$, restricted to involutions with no neighbour
  nesting,  induces   a bijection between involutions of $\Invs{2n}$ and
  \tpt-free posets on $n$ elements. 
\end{theorem}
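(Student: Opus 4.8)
The plan is to build an explicit inverse to $\Omega$ directly on $\Invs{2n}$, and check that the two maps compose to the identity in both directions; the main effort is showing that $\Omega$ is well-defined, injective, and surjective when restricted to the "no neighbour nesting" condition. First I would observe that $\Omega$ is well-defined as a map from \emph{all} fixed-point-free involutions on $2n$ points to interval orders on $n$ elements, since a fixed-point-free involution with $\alpha_i<\beta_i$ is literally a choice of $n$ intervals with integer endpoints, all $2n$ endpoints distinct; by Fishburn's theorem the relative order of any such family of intervals is a \tpt-free poset. So the content is entirely in the bijection between $\Invs{2n}$ (the involutions with no neighbour nesting) and $\P_n$. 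The number-matching is already known (Theorem~\ref{thm:u=1} together with Zagier's result that the same series counts RLCDs), so in principle it would suffice to prove injectivity, but I would aim for the stronger statement by exhibiting a natural inverse, since that is more informative and robust.

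The key idea for the inverse is that, given an interval order $P$ on $n$ elements, one can normalise the interval representation: among all representations of $P$ by intervals with distinct integer endpoints in $[2n]$, there is a canonical one, obtained by sorting the distinct left endpoints and the distinct right endpoints in the unique way compatible with $P$ and with the "no neighbour nesting" rule. Concretely, I would recall (this is the standard interval-order normal form, essentially Bogart's characterisation cited in the excerpt) that the distinct strict down-sets $D_0\subset D_1\subset\dots\subset D_k$ and the dual chain of strict up-sets organise the endpoints: one reads the $2n$ endpoints left to right, emitting a batch of left endpoints exactly when a new up-set value is about to close and a batch of right endpoints exactly when a down-set level is exhausted, pairing $\alpha_i$ with $\beta_i$ so that elements at the same level get nested-minimally arranged. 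The "no neighbour nesting" condition is precisely what forces this ordering to be unique: if two consecutive endpoints could be swapped without changing the induced order, that swap is exactly a forbidden nesting of the chords at positions $i$ and $i+1$. This gives a map $\Theta:\P_n\to\Invs{2n}$, and I would then verify $\Theta\circ\Omega=\mathrm{id}$ and $\Omega\circ\Theta=\mathrm{id}$ by a short argument: $\Omega\circ\Theta=\mathrm{id}$ because $\Theta(P)$ is a valid interval representation of $P$; and $\Theta\circ\Omega=\mathrm{id}$ because, starting from $\pi\in\Invs{2n}$, the no-neighbour-nesting condition says $\pi$ already \emph{is} in normal form, so re-normalising the interval order $\Omega(\pi)$ returns $\pi$.

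For the second direction more carefully: given $\pi\in\Invs{2n}$, I would show that no two cyclically... no, not cyclically — no two \emph{adjacent} positions $i,i+1$ in $\pi$ can be exchanged (by a simultaneous relabelling of the transpositions touching them) without altering the relative order of the corresponding intervals. This is a small case analysis on whether $i,i+1$ are both left endpoints, both right endpoints, or one of each, using the defining inequality "if $\pi_i>\pi_{i+1}$ then $\pi_i>i\ge\pi_{i+1}$" to rule out the nested configurations on the left of Figure~\ref{fig:swapping}; the allowed configuration (a chord from $i$ to $i+1$) causes no ambiguity. It follows that $\pi$ is rigid, i.e.\ equals the normal form of its own interval order, so $\Theta(\Omega(\pi))=\pi$. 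Conversely, every \tpt-free poset arises as $\Omega(\pi)$ for the normal-form $\pi$, which lies in $\Invs{2n}$ by construction, giving surjectivity. I expect the main obstacle to be the bookkeeping in defining the normal form $\Theta$ precisely — getting the interleaving of left-endpoint batches and right-endpoint batches right, and the within-level pairing right — and in verifying that this normal form is characterised by the absence of neighbour nesting; once that is pinned down, the two composition identities are essentially immediate. (The symmetry remark, that $\Omega$ intertwines mirror-image of the diagram with poset duality, follows directly from~\eqref{int-order} and will be used elsewhere but is not needed for the bijection itself.)
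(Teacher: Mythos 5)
Your overall strategy --- normalise the interval representation and argue that the no-neighbour-nesting condition singles out a unique representative of each \tpt-free poset --- is the right one and is in the same spirit as the paper's proof, but the step on which everything rests is not actually established, and the argument you offer in its place does not suffice. You claim injectivity follows because ``no two adjacent positions $i,i+1$ in $\pi$ can be exchanged without altering the relative order of the corresponding intervals.'' First, this is false as stated: if $i$ and $i+1$ are both left endpoints (or both right endpoints), conjugating by the transposition $(i,i+1)$ changes neither the down-sets nor the up-sets of the intervals, hence not the poset --- that is precisely why a tie-breaking rule such as ``no neighbour nesting'' is needed in the first place. Second, and more seriously, even the corrected claim (any single adjacent exchange either alters the poset or creates a neighbour nesting) only shows that $\pi$ is locally rigid; it does not rule out two elements of $\Invs{2n}$ representing the same poset while differing by something other than one adjacent transposition. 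Local rigidity yields global uniqueness only with an additional connectivity or confluence argument, which you do not supply. Likewise your normal form $\Theta$ is only gestured at, and you yourself flag its precise definition as ``the main obstacle'' --- but that is exactly where the content of the theorem lies.

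The paper closes this gap by reconstructing $\pi$ from $P=\Omega(\pi)$ outright. Writing $u\in\{o,c\}^{2n}$ for the word recording whether each point is an opening or closing endpoint, one shows that the level of a chord in $P$ equals the number of closing runs to the left of its opening point, and dually for $P^{*}$; hence $u$ is forced to be $o^{m_0}c^{n_k}o^{m_1}c^{n_{k-1}}\cdots o^{m_k}c^{n_0}$ with $m_i=|L_i(P)|$ and $n_i=|L_i(P^{*})|$. Then the observation that avoiding neighbour nestings is equivalent to $\pi$ being increasing along every opening run and along every closing run forces the matching of opening to closing half-chords, level by level, so $\pi$ is determined by $P$. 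For surjectivity the paper starts from an arbitrary interval representation (which exists by Fishburn's theorem, after perturbing repeated endpoints into distinct ones) and removes neighbour nestings by the swaps of Figure~\ref{fig:swapping}; termination holds because each swap preserves the poset while strictly increasing the number of crossings. You would need to supply arguments of comparable substance for both the uniqueness and the existence of your normal form before your proof is complete.
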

\begin{proof}
 Let us first prove that the restriction of $\Omega$ is a surjection. That
is, for every poset  $P\in \P_n$, one can find an involution
$\pi\in \Invs{2n}$ such
that $\Omega(\pi)=P$. Let $P \in \P_n$. As $P$ is an interval order,
there exists a collection of $n$ intervals  on the real line whose
relative order is $P$, under the order relation~\eqref{int-order}. We
can assume that the (right and left) endpoints of these  $n$ intervals
are $2n$ distinct points. Indeed, if 
some point $x$ occurs $k$ times as an endpoint, 
then the intervals ending at $x$ are incomparable,
and one can replace $x$ by $k$ distinct points and obtain a new collection
of intervals  whose order is still $P$, as shown below. 

\vskip 4mm
    \centerline{\scalebox{0.8}{\includegraphics{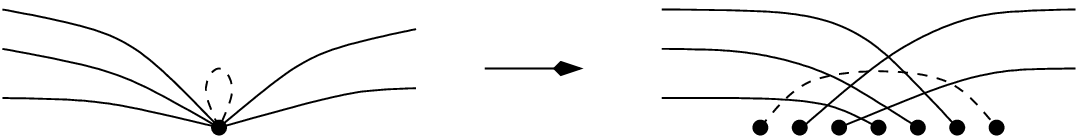}}}

Note that in this figure, intervals are
represented by chords rather than segments for the sake of clarity.  In
particular, an interval reduced to one point is represented by a loop.

 \begin{figure}[h!]
 \scalebox{0.7} {\input{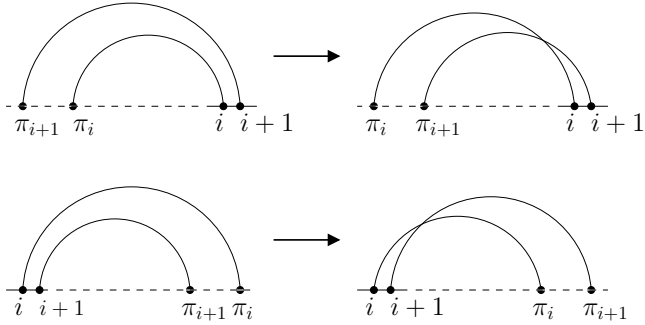}}
    \caption{Two operations  on chord diagrams.}
    \label{fig:swapping}
  \end{figure}

Clearly, we can then assume that the $2n$ distinct endpoints
of our $n$ intervals are exactly $1,2, \ldots, 2n$. These intervals thus
form a chord diagram, and there exists a fixed point free involution
$\pi\in \sym_{2n}$ such that $\Omega(\pi)=P$. However, $\pi$ may have
neighbour nestings. Transform recursively every such nesting as shown
in Figure~\ref{fig:swapping}. The corresponding poset does not change
with these transformations, while the number of crossings in the chord
diagram increases. Hence the sequence of transformations must stop,
and when it stops we have obtained an involution $\pi'$ with no
neighbour nesting such that $\Omega(\pi')=P$. An example is shown in
Figure~\ref{fig:nestings}, where we have indicated by a white dot
which nesting is transformed.

 \begin{figure}[h!]
\centerline{\scalebox{0.8}{\includegraphics{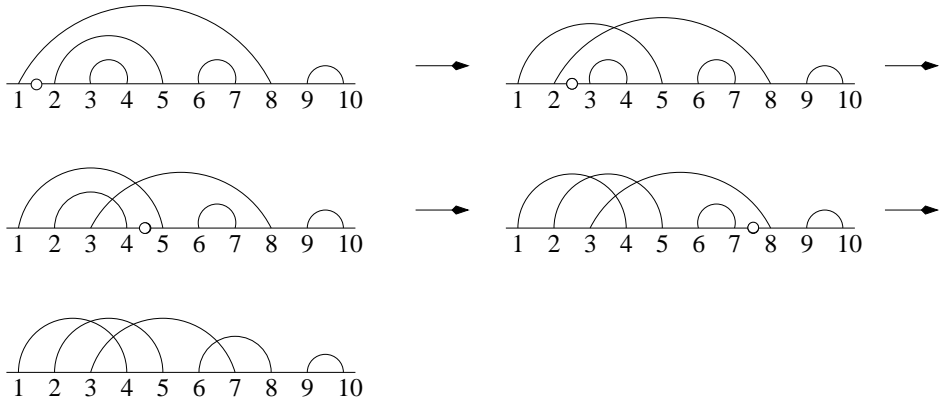}}}
    \caption{Deleting neighbour nestings from an involution.}
    \label{fig:nestings}
  \end{figure}

Let us now prove that $\Omega$, restricted to $\Invs{2n}$, is
injective. Assume $\pi\in \Invs{2n}$ and $\Omega(\pi)=P$. We will
prove that one can reconstruct the chord diagram of $\pi$ from $P$.

 We associate with $\pi$ a word $u=u_1 u_2\cdots u_{2n}$ over the
alphabet $\{o,c\}$ as follows: $u_i=o$ (resp. $c$) if there is an
opening (resp. closing) chord at $i$. That is, if $\pi_i>i$ (resp.
$\pi_i<i$).  We define an \emm opening run, to be a maximal factor of
$u$ containing only the letter $o$. We define similarly closing
runs. For instance, the involution in Figure~\ref{fig:chords} has 3
opening runs (and consequently 3 closing runs).

The order $P=\Omega(\pi)$ can be seen as an order on the chords of
$\pi$: given a chord $a=(i,j)$, with $i<j$, the chords that are
smaller than $a$ (the \emm predecessors, of $a$) are those that close
before $i$, and the chords that are larger than $a$ are those that
open after $j$. From this observation, it follows easily, by induction
on $i$, that the \emm level, of $a$ in $P$ (as defined in
Section~\ref{sec:poset}) is the number of closing runs found before
$i$ in $u$. Let $k=\rank(P)$ be the highest level of an element of
$P$, and for $0\le i \le k$, denote by $m_i$
 the number of elements at level $i$ in $P$. 
Then  the preceding discussion implies that the word $u$
associated with $ \pi$ is of the form $o^{m_0} c^{n_k}o^{m_1} c^{n_{k-1}}\cdots 
o^{m_k} c^{n_0}$ where $n_i>0$ for all $i$.
But by symmetry, $n_i$ must be the number of elements at level $i$ in
$P^*$ (and moreover, $\rank(P)=\rank(P^*)$). Thus the word $u$ can be
reconstructed from $P$ and its dual. We represent $u$ by a
sequence of $2n$ half-chords, some opening, some closing. For instance,
we show below
the sequence of half-chords obtained from the
poset $P$ of Figure~\ref{fig:chords} and its dual $P^*$. It is
convenient to assign with each opening (resp. closing) half-chord a
label, equal to the level of the corresponding element of $P$ (resp. $P^*$).

\smallskip
    {\begin{picture}(0,0)%
\includegraphics{half-chords.pstex}%
\end{picture}%
\setlength{\unitlength}{4144sp}%
\begingroup\makeatletter\ifx\SetFigFontNFSS\undefined%
\gdef\SetFigFontNFSS#1#2#3#4#5{%
  \reset@font\fontsize{#1}{#2pt}%
  \fontfamily{#3}\fontseries{#4}\fontshape{#5}%
  \selectfont}%
\fi\endgroup%
\begin{picture}(4838,1968)(2866,-3790)
\put(5806,-3616){\makebox(0,0)[lb]{\smash{{\SetFigFontNFSS{10}{12.0}{\familydefault}{\mddefault}{\updefault}{\color[rgb]{0,0,0}1}%
}}}}
\put(4006,-3616){\makebox(0,0)[lb]{\smash{{\SetFigFontNFSS{10}{12.0}{\familydefault}{\mddefault}{\updefault}{\color[rgb]{0,0,0}0}%
}}}}
\put(4366,-3616){\makebox(0,0)[lb]{\smash{{\SetFigFontNFSS{10}{12.0}{\familydefault}{\mddefault}{\updefault}{\color[rgb]{0,0,0}0}%
}}}}
\put(4726,-3616){\makebox(0,0)[lb]{\smash{{\SetFigFontNFSS{10}{12.0}{\familydefault}{\mddefault}{\updefault}{\color[rgb]{0,0,0}0}%
}}}}
\put(7246,-3616){\makebox(0,0)[lb]{\smash{{\SetFigFontNFSS{10}{12.0}{\familydefault}{\mddefault}{\updefault}{\color[rgb]{0,0,0}0}%
}}}}
\put(6526,-3616){\makebox(0,0)[lb]{\smash{{\SetFigFontNFSS{10}{12.0}{\familydefault}{\mddefault}{\updefault}{\color[rgb]{0,0,0}1}%
}}}}
\put(5086,-3616){\makebox(0,0)[lb]{\smash{{\SetFigFontNFSS{10}{12.0}{\familydefault}{\mddefault}{\updefault}{\color[rgb]{0,0,0}2}%
}}}}
\put(5446,-3616){\makebox(0,0)[lb]{\smash{{\SetFigFontNFSS{10}{12.0}{\familydefault}{\mddefault}{\updefault}{\color[rgb]{0,0,0}2}%
}}}}
\put(6886,-3616){\makebox(0,0)[lb]{\smash{{\SetFigFontNFSS{10}{12.0}{\familydefault}{\mddefault}{\updefault}{\color[rgb]{0,0,0}2}%
}}}}
\put(6166,-3616){\makebox(0,0)[lb]{\smash{{\SetFigFontNFSS{10}{12.0}{\familydefault}{\mddefault}{\updefault}{\color[rgb]{0,0,0}1}%
}}}}
\put(5761,-3121){\makebox(0,0)[lb]{\smash{{\SetFigFontNFSS{12}{14.4}{\familydefault}{\mddefault}{\updefault}{\color[rgb]{0,0,0}\normalsize{$|L_0(P^*)|=1$, $|L_1(P^*)|=2, |L_2(P^*) |=2$}}%
}}}}
\put(2881,-3121){\makebox(0,0)[lb]{\smash{{\SetFigFontNFSS{12}{14.4}{\familydefault}{\mddefault}{\updefault}{\color[rgb]{0,0,0}\normalsize{$|L_0(P)|=3$, $|L_1(P)|=1, |L_2(P) |=1$}}%
}}}}
\put(3331,-2176){\makebox(0,0)[lb]{\smash{{\SetFigFontNFSS{12}{14.4}{\familydefault}{\mddefault}{\updefault}{\color[rgb]{0,0,0}\normalsize{$P$}}%
}}}}
\put(6346,-2176){\makebox(0,0)[lb]{\smash{{\SetFigFontNFSS{12}{14.4}{\familydefault}{\mddefault}{\updefault}{\color[rgb]{0,0,0}\normalsize{$P^*$}}%
}}}}
\end{picture}%
}

It remains to see that the matching between opening and closing
half-chords that characterizes $\pi$ is forced by $P$. We will prove this recursively, by
matching opening chords  run by run, from left to right. That is, we
match the $m_0$ opening chords labelled $0$, then the
$m_1$ opening chords labelled 1, and so on. Assume we have
matched the first $m_0+m_1+\cdots +m_{i-1}$ opening chords.
For $0\le j\le k$, let $m_{i,j}$ be the number of elements of $P$ that have
level $i$ in $P$ and level $j$ in $P^*$. This is the number of chords
of $\pi$ with opening label $i$ and closing label $j$. Of course,
$m_{i,0}+\cdots + m_{i,k}=m_i$. 

Observe the following property:
\begin{quote}
($\star$) An involution $\pi$ avoids neighbour nestings if and only if, for
every opening run found at positions $i, i+1, \ldots, i+\ell$, one has
$\pi_i<\pi_{i+1} <\cdots < \pi_{i+\ell}$, and symmetrically, for every
closing run found at positions $i-\ell, \ldots, i-1, i$, one has 
$\pi_{i-\ell}<\cdots <\pi_{i-1} < \pi_{i}$.
\end{quote}

 This  property implies that the $m_{i,k}$
first (i.e., leftmost) opening chords labelled $i$ must be matched with
closing chords labelled $k$, the $m_{i,k-1}$
next opening chords labelled $i$ must be matched with closing chords
labelled $k-1$, and so on.  The second part of property~($\star$)
then forces the choice of the $m_{i,j}$  closing chords that will
be matched with opening chords labelled $i$: they are the leftmost
unmatched closing chords labelled $j$.  The matching of
half-arches is thus forced, and $\pi$ can be completely
reconstructed from $P$. Hence the restriction of $\Omega$ to
$\Invs{2n}$ is injective.

Let us illustrate the matching procedure by completing our running
example.
 For the above poset $P$, we find
$m_{0,2}= 2$, $m_{0,1}= 1$, $m_{0,0}= 0$, which allows us to match the
chords of the first opening run (equivalently, the opening chords
labelled $0$):

\vskip 2mm
  \centerline{\scalebox{1}{\includegraphics{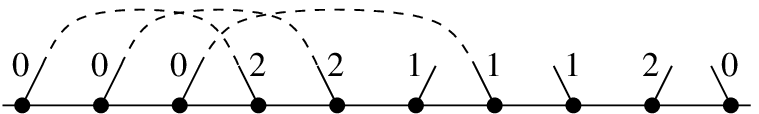}}}

Then, $m_{1,2}= 0$, $m_{1,1}= 1$, $m_{1,0}= 0$, which forces the
matching of the (unique) opening chord labelled 1:
\vskip 2mm

  \centerline{\scalebox{1}{\includegraphics{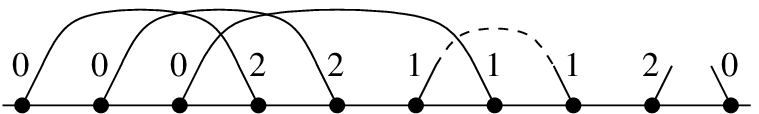}}}

Finally,
 $m_{2,2}= 0$, $m_{2,1}= 0$, $m_{2,0}= 1$, and we recover the
involution with no neighbour nesting shown in Figure~\ref{fig:chords}:
\vskip 2mm

  \centerline{\scalebox{1}{\includegraphics{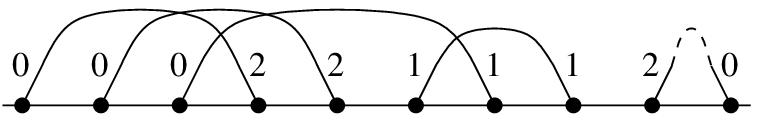}}}

\end{proof}

\noindent{\bf Remarks}\\
1. It follows from the proof of Theorem~\ref{thm:inv-poset} that, given any
collection of intervals with distinct endpoints whose relative order
is $P$, the transformations in Figure~\ref{fig:swapping}, applied in any order,
will yield ultimately the chord diagram of the involution
$\Omega^{-1}(P)$.  Note that these transformations boil down to
conjugating a fixed point free involution by the elementary
transposition $(i,i+1)$.

\noindent
2. We have worked out the recursive description of
involutions of $\Invs{2n}$ that corresponds, via the transformation
$\Omega$, to the recursive construction  of 
\tpt-free posets described in Section~\ref{sec:poset}, but it is
rather involved~\cite{CDK}. 

\noindent
3. The correspondence $\Omega$ allows one to read from
an involution $\pi\in \Invs{2n}$  the statistics  defined in
Section~\ref{sec:stat} for the poset $P=\Omega(\pi)$. For instance,
the number of minimal elements in $P$ is the length of the first
opening run of $\pi$.  Symmetrically, the number of maximal elements
of $P$ is the length of the last closing run of $\pi$. We have already
discussed how the distribution of levels of the elements of $P$ can be read from
$\pi$. Finally, there is a natural analogue on involutions for the
number of components of a poset.

\section{Final questions and remarks}\label{sec:open}

\begin{question}
  Is there a simple graphical construction
on the dot diagram of a permutation
  in $\R_{n}$ that gives bijectively
an unlabeled {\tpt}-free poset on  $n$ elements?

A simple idea would be to view the dots of the diagram
 as a poset under the standard product order on $\NN^2$, as is done
 in~\cite{bmb}. 
For $n\leq 4$ 
 the posets associated with permutations of $ \R_n$
are exactly the  unlabeled {\tpt}-free posets of size $n$.
However, for $n=5$ the poset corresponding to the permutation $\pi =
41523\in\R_5$ contains an induced copy of \tptp.  This is illustrated in the
diagram below.\medskip
$$
\includegraphics[scale=0.6]{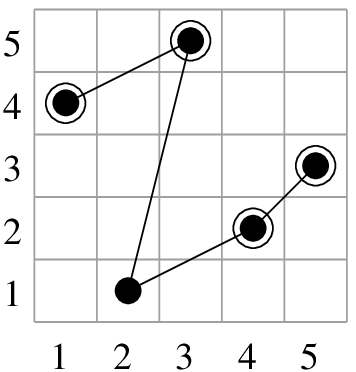} 
$$
\end{question}

\begin{question}
  Ascent sequences are special inversion tables. Turn these inversion
  tables into permutations in the two standard ways
  (see~\cite[p.~20-21]{stanley}). Is there a simple characterisation
  of those sets of permutations?
\end{question}

\begin{question}
A simple involution
acts on the set of \tpt-free posets: duality, 
or order-reversion. In terms of chord diagrams, this corresponds
to taking the mirror image of a diagram. What is the corresponding
transformation  
on permutations of $\R$? For instance, the permutation associated with
the poset $P$ of Example~\ref{ex:rem} is $31746825$, while the permutation
associated with the dual poset $P^*$ is $41726583$.  
\end{question}

\noindent {\bf Acknowledgment.}
Thanks to Henning \'Ulfarsson for pointing out that bivincular
permutations form a quasigroup, rather than a group as we incorrectly
claimed in an 
earlier
 draft.

%

\end{document}